\let\mathcal\mathscr
\def\l@section{\@tocline{1}{0pt}{0pc}{}{}}
\def\l@subsection{\@tocline{2}{0pt}{1.5pc}{}{}}
\def\l@subsubsection{\@tocline{3}{0pt}{2pc}{}{}}
\def\shc{\mathcal{C}}
\def\shd{\mathcal{D}}\let\cD\shd
\def\shf{\mathcal{F}}
\def\shh{\mathcal{H}}
\def\shh{\mathcal{H}}
\def\shl{\mathcal{L}}
\def\shm{\mathcal{M}}
\def\shn{\mathcal{N}}
\def\sho{\mathcal{O}}\let\cO\sho
\newcommand{\C}{\mathbb{C}}\let\CC\C
\newcommand{\R}{\mathbb{R}}
\newcommand{\bD}{\boldsymbol{D}}
\newcommand{\Rhom}{R\shh\!om}
\newcommand{\rb}{\mathrm{b}}
\newcommand{\coh}{\mathrm{coh}}
\newcommand{\hol}{\mathrm{hol}}
\newcommand{\perv}{\mathrm{perv}}
\newcommand{\rhol}{\mathrm{rhol}}
\newcommand{\Mod}{\mathrm{Mod}}
\newcommand{\imin}[1]{#1^{-1}}
\newcommand{\cc}{{\C\textup{-c}}}
\newcommand{\rc}{{\R\textup{-c}}}
\newcommand{\XS}{X\times S}
\newcommand{\DXS}{\shd_{X\times S/S}}
\newcommand{\lpro}[1]{\underset{#1}{\varprojlim}}
\DeclareMathOperator{\rh}{\mathit{R}\mathcal{H}\mathit{om}}
\DeclareMathOperator{\ho}{\mathcal{H}\mathit{om}}
\DeclareMathOperator{\tho}{\mathit{T}\mathcal{H}\mathit{om}}
\newcommand{\Ker}{\mathrm{Ker}}
\newcommand{\Coker}{\mathrm{Coker}}
\DeclareMathOperator{\Char}{Char}
\DeclareMathOperator{\codim}{codim}
\DeclareMathOperator{\pD}{{}^\mathrm{p}\mathsf{D}}
\DeclareMathOperator{\rD}{\mathsf{D}}
\DeclareMathOperator{\DR}{DR}
\DeclareMathOperator{\pDR}{{}^\mathrm{p}DR}
\DeclareMathOperator{\RH}{RH^{S}}
\DeclareMathOperator{\Hom}{Hom}
\DeclareMathOperator{\id}{id}
\DeclareMathOperator{\Sol}{Sol}
\DeclareMathOperator{\pSol}{{}^\mathrm{p}Sol}
\DeclareMathOperator{\supp}{Supp}
\let\tilde\widetilde
\let\epsilon\varepsilon
\let\emptyset\varnothing
\let\setminus\smallsetminus
\let\leq\leqslant
\let\geq\geqslant
\def\loccit{loc.\kern3pt cit.{}\xspace}
\def\cf{cf.\kern.3em}
\def\eg{e.g.\kern.3em}
\def\resp{\text{resp.}\kern.3em}
\newcommand{\cbbullet}{{\raisebox{1pt}{$\sbullet$}}}
\newcommand{\sbullet}{{\scriptscriptstyle\bullet}}
\newcommand{\pOS}{p_X^{-1}\cO_S}
\theoremstyle{plain}
\newtheorem{theorem}{Theorem}[section]
\newtheorem{Proposition}[theorem]{Proposition}
\newtheorem{lemma}[theorem]{Lemma}
\newtheorem{corollary}[theorem]{Corollary}
\theoremstyle{definition}
\newtheorem{definition}[theorem]{Definition}
\newtheorem{remark}[theorem]{Remark}
\newtheorem*{claim*}{Claim}
\newcommand{\RedefinitSymbole}[1]{%
\expandafter\let\csname old\string#1\endcsname=#1
\let#1=\relax
\newcommand{#1}{\csname old\string#1\endcsname\,}%
}
\let\ra\rightarrow
\def\to{\mathchoice{\longrightarrow}{\rightarrow}{\rightarrow}{\rightarrow}}
\def\mto{\mathchoice{\longmapsto}{\mapsto}{\mapsto}{\mapsto}}
\def\hto{\mathrel{\lhook\joinrel\to}}
\def\To#1{\mathchoice{\xrightarrow{\textstyle\kern4pt#1\kern3pt}}{\stackrel{#1}{\longrightarrow}}{}{}}
\let\oldbigwedge\bigwedge
\renewcommand{\bigwedge}{\mathOp{\textstyle\oldbigwedge}\displaylimits}
\let\oldbigcap\bigcap
\renewcommand{\bigcap}{\mathOp{\textstyle\oldbigcap}\displaylimits}
\let\oldprod\prod
\renewcommand{\prod}{\mathOp{\textstyle\oldprod}\displaylimits}
\begin{document}
\author{Luisa Fiorot}
\author{Teresa Monteiro Fernandes}
\title[$t$-exactness of the de Rham functor]{
$t$-structures for relative $\shd$-modules and $t$-exactness of the de Rham functor
}

\date{}

\thanks{The research of L.Fiorot was  supported by project BIRD163492 "Categorical homological methods in the study of algebraic structures" and project DOR1749402. The research of T.Monteiro Fernandes was supported by  Funda\c c\~ao para a Ci\^encia e a Tecnologia, UID/MAT/04561/2013.}

\thanks{}

\address{Luisa Fiorot\\ Dipartimento di Matematica ``Tullio Levi-Civita'' Universit\`a degli Studi di Padova\\
Via Trieste, 63
35121 Padova Italy\\ \texttt{luisa.fiorot@unipd.it}}

\address{Teresa Monteiro Fernandes\\ Centro de Matem\'atica e Aplica\c{c}\~{o}es Fundamentais-CIO and Departamento de Matem\' atica da Faculdade de Ci\^encias da Universidade de Lisboa, Bloco C6, Piso 2, Campo Grande, 1749-016, Lisboa
Portugal\\ \texttt{mtfernandes@fc.ul.pt}}

\keywords{relative $\mathcal D$-modules, De Rham functor, $t$-structure}

\subjclass[2010]{14F10, 32C38, 35A27, 58J15}

\begin{abstract}

This paper is a contribution to the study of relative holonomic $\shd$-modules. Contrary to the absolute case,
the standard $t$-structure on holonomic $\shd$-modules is not preserved by duality and hence the solution functor  is no longer $t$-exact with respect to the  canonical, resp. middle-perverse, $t$-structure.
We provide an  
 explicit description of these dual $t$-structures.
We use this description to prove that the solution functor 
 as well as the relative Riemann-Hilbert functor  are
  $t$-exact with respect  to the dual $t$-structure  and to the middle-perverse one while the de  Rham functor is $t$-exact for the canonical, resp. middle-perverse, $t$-structure and their duals.
\end{abstract}
\maketitle

%\shorttitle{$t$-exactness of the de Rham functor}
\tableofcontents
\section*{Introduction.}
Let $X$ and $S$ be complex manifolds and let $p_X$ denote the projection of 
$X\times S\to S$. We shall denote by $d_X$ and $d_S$ their respective complex dimensions and will often write $p$ instead of $p_X$ whenever there is no ambiguity.

An extensive study of holonomic and regular holonomic $\DXS$-modules as well as of their derived categories was performed in \cite{MFCS1} and \cite{MFCS2}. Such modules are called for convenience respectively relative holonomic and regular relative holonomic modules. Relative holonomic modules are coherent modules whose characteristic variety, in the product $(T^*X)\times S$, is contained in $\Lambda\times S$ for some Lagrangian conic closed analytic subset $\Lambda$ of $T^*X$. Regular relative holonomic modules are holonomic modules whose restriction to  the fibers of $p_X$ have regular holonomic $\shd_X$-modules as cohomologies. 

Another notion introduced in \cite{MFCS1} was that of $\C$-constructibility over $p_X^{-1}\sho_S$, 
leading
 to the (bounded) derived category of sheaves of $p^{-1}_X\sho_S$-modules with $\C$\nobreakdash-constructible cohomology, the $S$-$\C$-constructible complexes (this category is denoted by $\rD^\rb_\cc(\pOS)$), where a natural notion of perversity was also introduced. In 
loc.cit. it was proved that the essential image of the de\,Rham functor $\DR$ as well as of the solution functor $\Sol$, when restricted to the bounded derived category of $\cD_{\XS/S}$-modules with holonomic cohomology, is $\rD^\rb_\cc(\pOS)$. Recall that, denoting by $\pSol(\shm)$ (\resp $\pDR(\shm))$ the complex $\Sol(\shm)[d_X]$ (\resp $\DR(\shm)[d_X])$, these two functors satisfy a natural isomorphism of commutation  with duality: $\bD\pSol(\cdot)\simeq\pDR(\cdot)$.

Under the assumption $d_S=1$, a right quasi-inverse functor to $\pSol$, the functor $\RH$, was introduced in \cite{MFCS2}, so naturally $\RH$ is a functor from $\rD^\rb_\cc(\pOS)$ to the bounded derived category $\rD^\rb_\rhol(\cD_{\XS/S})$ of $\cD_{\XS/S}$-modules with regular holonomic modules. $\RH$ is the relative version of Kashiwara's Riemann Hilbert functor $\mathrm{RH}$ (cf.\cite{Ka3}) as explained in Section \ref {subsec:relsubanalytic} where we briefly recall its construction.
Recall that the importance of this apparently restrictive assumption on $S$ is two-sided: for $d_S=1$, $\sho_S$-flatness and absence of $\sho_S$ torsion are equivalent, so we can split proofs in the torsion case and in the torsion free case; on the other hand, although we will not enter into details here, the construction of $\RH$ requires, locally on $S$, the existence of bases of the coverings of the subanalytic site $S_{sa}$ formed by $\sho_S$-acyclic open subanaytic sets which is possible in the case $d_S=1$.

The main goal of this paper is to prove the $t$-exactness of $\pSol$, $\pDR$ 
and $\RH$ with respect to the $t$-structures involved (for any $S$ in the first two cases and for $d_S=1$ in the case of  $\RH$). Recall that when one replaces $\sho_S$ by the constant sheaf $\C_X[[\hbar]]$ of formal power series in one parameter $\hbar$, so no longer in the relative case, these questions were studied and solved by A. D'Agnolo, S. Guillermou and P. Schapira  in \cite{D'AgnSGuillPSch}.

Here, to be more precise, in the holonomic side we have the standard $t$-structure $P$ as well as its dual $\Pi$, which, contrary to the absolute case proved by Kashiwara in \cite{Ka3}, do not coincide if $d_X\geq 1,\,d_S\geq 1$ which is not surprising due to the possible absence of $\sho_S$-flatness. Similarly, on the $\C$-constructible side, we have the perverse $t$-structure $p$ introduced in \cite{MFCS1} and its dual $\pi$, which do not coincide 
if $d_X,\,d_S\geq 1$ as well.
 Kashiwara's paper \cite{Ka4} provides a wide setting for this kind of problems covering the case $d_X=0$ (the $\sho_S$-coherent case) as well as the standard $t$-structure on the $\C$-constructible case and the correspondent $t$-structure on $\rD^\rb_{\rhol}(\shd_X)$ via $\mathrm{RH}$. We took there our inspiration, adapting the ideas of several proofs. 
 
 In Theorems~\ref{PrOp:PiSupp} and \ref {PrOp:PerDual} we completely describe $\Pi$ and $\pi$ for any $d_X$ and $d_S$. In particular, when $d_S=1$, we prove in 
 Proposition~\ref{Pholpi}  that $\Pi$ is obtained  by left tilting $P$ with respect to a natural torsion pair (respectively $P$ is obtained by right tilting $\Pi$ with respect to a natural torsion pair) and we conclude in Corollary \ref{chol1} that the category of strict relative holonomic modules is quasi-abelian (\cite{S}). 
 Similar results are deduced for $\pi$ and $p$ in Proposition \ref {PrOp:cctor} leading to the conclusion that perverse $S$-$\C$-constructible complexes with a perverse dual are the objects of a quasi-abelian category.
Recall that the procedure of  tilting a $t$-structure $({\rD}^{\leq 0},{\rD}^{\geq 0})$
on a triangulated category  $\mathcal C$ with respect to a given torsion pair
$(\mathcal T,\mathcal F)$ on its heart has been introduced  by
Happel, Reiten and Smal{\o} in their work \cite{HRS}.
Following the notation of Bridgeland (\cite{Brid} and \cite{Brid2})
Polishchuk proved in \cite{Po} that performing the left tilting procedure
one gets all the $t$-structures $({\overline{\rD}}^{\leq 0},{\overline{\rD}}^{\geq 0})$  satisfying 
the condition $ {\rD}^{\leq 0}\subseteq {\overline{\rD}}^{\leq 0}\subseteq{{\rD}}^{\leq 1}$. 
The relations between torsion pairs, tilted $t$-structures and quasi-abelian categories
have been clarified in \cite{BonVdBergh} and \cite{F}.

With these informations in hand we have the tools to prove,
in Theorem \ref{T:perv 1} that $\pDR$ is exact with respect to $P$ and $p$ (so, by duality, with respect to $\Pi$ and $\pi$) which gives a precision to the behaviour of $\pDR$ already studied in \cite{MFCS2}.
However, since it is not known if $\RH$ provides an equivalence of categories for general $d_X$, we do not dispose of a morphism of functors $\bD\RH(\cdot)\to\RH(\bD (\cdot))$ allowing us to argue by duality as in the $\C$-constructible framework. Nevertheless, by a direct proof, 
in Theorem \ref{T:perv 2} we prove that $\RH$ is exact with respect to $p$ and $\Pi$ as well as to the dual structures $\pi$ and $P$.

We are deeply grateful to the referee for the pertinent corrections which helped us to improve our work.

\section{Torsion pairs, quasi-abelian categories and $t$-structures }
Let $\shc$ be an additive category.
In what follows any full subcategory $\shc'$ of $\shc$ will be strictly full
(i.e., closed under isomorphisms) and additive and we will use the notation $\shc'\subseteq \shc$ to indicate such a subcategory. Any functor between additive categories will be an additive functor. In these terms given $\shc_i\subseteq \shc$ for $i\in \{1,2\}$
following \cite[Definition 1.3.1]{BBD} we will denote by $\shc_1\cap \shc_2$ the strictly full subcategory
of $\shc$ whose objects belong to both $\shc_1$ and $\shc_2$.

A {\it torsion pair} in an abelian category $\mathcal A$ is a pair $(\mathcal T,\mathcal F)$ of full subcategories of 
$\mathcal A$ satisfying the following conditions:
$\Hom_{A}(T,F)=0$ for every $T\in \mathcal T$ and every $F\in\mathcal F$ ;
for any object $A\in\mathcal A$ there exists a short exact sequence:
$0 \to t(A) \to A \to f(A) \to 0$
in $\mathcal A$ such that $t(A)\in\mathcal T$ and $f(A)\in\mathcal F$. The class $\mathcal T$ is called the {\it torsion class} and it is closed under extensions, direct sums and quotients, while $\mathcal F$ is the {\it torsion-free class}
and it is closed under extensions, subobjects and direct products. In particular,  
 $\mathcal T$ is a full subcategory of $\mathcal A$ 
 such that the inclusion functor $i_{\mathcal T}: \mathcal T\to \mathcal A$ admits a right adjoint
 $t:\mathcal A\to  \mathcal T$ such that $t i_{\mathcal T}={\mathrm{id}}_{\mathcal T}$, and dually, 
 the inclusion functor $i_{\mathcal F}: \mathcal F\to \mathcal A$ admits a left adjoint
 $f:\mathcal A\to  \mathcal F$ such that $f i_{\mathcal F}={\mathrm{id}}_{\mathcal F}$.

\medskip
In general, the categories $\mathcal T$ and $\mathcal F$ are not abelian categories but, 
as observed in \cite[5.4]{BonVdBergh}, they 
are \emph{quasi-abelian} categories.
Let us recall that 
an additive category $\mathcal E$ is called \emph{quasi-abelian} if it admits
kernels and cokernels, and the class of
short exact sequences $0\to E_1\stackrel{\alpha}\to E_2\stackrel{\beta}\to E_3 \to 0$
with $E_1\cong \Ker\beta$ and $E_3\cong \Coker\,\alpha$ is stable by 
pushouts and  pullbacks.
Both $\mathcal T$ and $\mathcal F$ admit kernels and cokernels
such that: $\Ker_{\mathcal T}=t\circ \Ker _{\mathcal A}$, $\Coker_{\mathcal T}=\Coker _{\mathcal A}$,
$\Ker_{\mathcal F} = \Ker _{\mathcal A}$ and $\Coker_{\mathcal F}=f\circ \Coker _{\mathcal A}$. 
Exact sequences  in $\mathcal T$ (respectively in $\mathcal F$) coincide
with short exact sequences in $\mathcal A$ whose terms belong to $\mathcal T$ (respectively $\mathcal F$)
and hence they are stable by pullbacks and push-outs thus proving  that $\mathcal T$ and $\mathcal F$ are quasi-abelian categories.
For more details on quasi-abelian categories we refer to Schneiders work \cite{S} and to \cite{F}.
We refer to \cite{KS3} 
for generalities on triangulated categories.

\medskip
 
 \begin{definition}\label{Def:tilt}(\cite[Ch.~I, Proposition~2.1]{HRS}, \cite[Proposition~2.5]{Brid}).
 Let ${\mathcal H}_{\rD}$ be the heart of a 
$t$-structure ${\rD}=({\rD}^{\leq 0},{\rD}^{\geq 0})$ on a triangulated category ${\mathcal C}$ and let 
$({\mathcal T},{\mathcal F})$ be a torsion pair on ${\mathcal H}_{\rD}$. 
Then the pair ${\rD}_{({\mathcal T},{\mathcal F})}:=
(\rD^{\leq 0}_{({\mathcal T},{\mathcal F})},{\rD}^{\geq 0}_{({\mathcal T},{\mathcal F})})$ of full subcategories of $\mathcal C$
\[
\begin{matrix}
{\rD}^{\leq 0}_{({\mathcal T},{\mathcal F})}= & \{ C\in \mathcal C\; | \; 
H_{\rD}^1(C)\in{\mathcal T},\; H_{\rD}^i(C)=0 \;  \forall i>1 \} \hfill\\
{\rD}^{\geq 0}_{({\mathcal T},{\mathcal F})}= & \{C\in \mathcal C\; | \; 
H_{\rD}^{0}(C)\in{\mathcal F},\; H_{\rD}^i(C)=0 \;  \forall i<0 \} \\
\end{matrix}
\]
is a $t$-structure on $\mathcal C$ whose heart is
\[
{\mathcal H}_{{\rD}_{({\mathcal T},{\mathcal F})}}
=  \{ C\in \mathcal C\; | \; 
H_{\rD}^1(C)\in{\mathcal T},\; 
H_{\rD}^{0}(C)\in{\mathcal F},\; H_{\rD}^i(C)=0 \;  \forall i\notin \{0,1\} \} .
\]

Following \cite{Brid} we say that ${\rD}_{({\mathcal T},{\mathcal F})}$ is obtained 
\emph{by left tilting $\rD$ with respect to the torsion pair ${({\mathcal T},{\mathcal F})}$} while
the $t$-structure 
${\tilde{\rD}}_{({\mathcal T},{\mathcal F})}:={\rD}_{({\mathcal T},{\mathcal F})}[1]$
is called the $t$-structure obtained
\emph{by right tilting $\rD$ with respect to the torsion pair ${({\mathcal T},{\mathcal F})}$} and in this case the right  
tilted heart is:
\[
{\mathcal H}_{{\tilde{\rD}}_{({\mathcal T},{\mathcal F})}}
=  \{ C\in \mathcal C\; | \; 
H_{\rD}^0(C)\in{\mathcal T},\; 
H_{\rD}^{-1}(C)\in{\mathcal F},\; H_{\rD}^i(C)=0 \;  \forall i\notin \{0,-1\} \} .
\]
\end{definition}

\begin{remark}\label{Rem:HRStilt} (\cite{HRS}).
Following the previous notations, whenever one performs a left tilting of $\rD$ with respect to
a given torsion pair ${({\mathcal T},{\mathcal F})}$ on ${\mathcal H}_{\rD}$
one obtains the new heart ${\mathcal H}_{{\rD}_{({\mathcal T},{\mathcal F})}}$ and the 
starting torsion pair
is  ``tilted'' in the torsion pair $({\mathcal F},{\mathcal T}[-1])$ which is a torsion pair in ${\mathcal H}_{{\rD}_{({\mathcal T},{\mathcal F})}}$: the class ${\mathcal F}$ placed in degree zero is the torsion class
for this torsion pair while the old torsion class ${\mathcal T}$ shifted by $[-1]$ becomes the new torsion-free class and, for any $M\in {\mathcal H}_{{\rD}_{({\mathcal T},{\mathcal F})}}$, the sequence
$0\to H^0(M)\to M \to H^1(M)[-1]\to 0$ is the short exact sequence associated to the torsion pair
$({\mathcal F},{\mathcal T}[-1])$.

Performing a right tilting of ${\rD}_{({\mathcal T},{\mathcal F})}$ with respect to the torsion pair
$({\mathcal F},{\mathcal T}[-1])$ on  ${\mathcal H}_{{\rD}_{({\mathcal T},{\mathcal F})}}$ 
one re-obtains the starting $t$-structure 
${\rD}$ endowed with its torsion pair $({\mathcal T},{\mathcal F})$. In such a way the right tilting by
$({\mathcal F},{\mathcal T}[-1])$ in ${\mathcal H}_{{\rD}_{({\mathcal T},{\mathcal F})}}$ is the inverse
of the left tilting of $\rD$ with respect to ${({\mathcal T},{\mathcal F})}$ on ${\mathcal H}_{\rD}$.
\end{remark}

Any $t$-structure ${\rD}_{({\mathcal T},{\mathcal F})}$ obtained by left tilting 
${\rD}$ with respect to a torsion pair ${({\mathcal T},{\mathcal F})}$  in the heart ${\mathcal H}_{\rD}$ of a $t$-structure $\rD$ in $\mathcal C$  satisfies
\[
\rD^{\leq 0}\subseteq {\rD}_{({\mathcal T},{\mathcal F})}^{\leq 0}\subseteq {\rD}^{\leq 1}\quad\text{or equivalently}\quad
{\rD}^{\geq 1}\subseteq {\rD}_{({\mathcal T},{\mathcal F})}^{\geq 0}\subseteq {\rD}^{\geq 0}
\]
and hence the heart ${\mathcal H}_{{\rD}_{({\mathcal T},{\mathcal F})}}$ of the $t$-structure ${\rD}_{({\mathcal T},{\mathcal F})}$
satisfies
${\mathcal H}_{{\rD}_{({\mathcal T},{\mathcal F})}}\subseteq
{\rD}^{[0,1]}:={\rD}^{\leq 1}\cap {\rD}^{\geq 0}$.  
Dually 
any $t$-structure ${\tilde{\rD}}_{({\mathcal T},{\mathcal F})}:={\rD}_{({\mathcal T},{\mathcal F})}[1]$ obtained by right tilting 
${\rD}$ with respect to a torsion pair ${({\mathcal T},{\mathcal F})}$  in the heart ${\mathcal H}_{\rD}$ of a $t$-structure $\rD$ in $\mathcal C$  satisfies
\[
\rD^{\leq -1}\subseteq {\tilde{\rD}}_{({\mathcal T},{\mathcal F})}^{\leq 0}\subseteq {\rD}^{\leq 0}\quad\text{or equivalently}\quad
{\rD}^{\geq 0}\subseteq {\tilde{\rD}}_{({\mathcal T},{\mathcal F})}^{\geq 0}\subseteq {\rD}^{\geq -1}
\]
and hence 
${\mathcal H}_{{\tilde{\rD}}_{({\mathcal T},{\mathcal F})}}\subseteq
{\rD}^{[-1,0]}:={\rD}^{\leq 0}\cap {\rD}^{\geq -1}.$ 

Polishchuk in \cite[Lemma 1.2.2]{Po} proved the following:
 
\begin{lemma}\label{PrOp:Pol}
In any pair of $t$-structures 
${\rD}, {\overline{\rD}}$ on a triangulated category $\mathcal C$
verifying the condition 
$ {\rD}^{\leq 0}\subseteq {\overline{\rD}}^{\leq 0}\subseteq{{\rD}}^{\leq 1}$
(resp. 
$ {\rD}^{\leq -1}\subseteq {\overline{\rD}}^{\leq 0}\subseteq{{\rD}}^{\leq 0}$), the $t$-structure 
${\overline{\rD}}$ is obtained by left tilting (resp. right tilting)
${\rD}$ with respect to the torsion pair
\[{({\mathcal T},{\mathcal F})}:=(\overline{{\rD}}^{\leq -1}\cap {\mathcal H}_{\rD}, \overline{{\rD}}^{\geq 0}\cap {\mathcal H}_{\rD})
\qquad (\hbox{resp. } 
{({\mathcal T},{\mathcal F})}:=(\overline{{\rD}}^{\leq 0}\cap {\mathcal H}_{\rD}, \overline{{\rD}}^{\geq 1}\cap {\mathcal H}_{\rD})\] and
in particular, for any $A\in{\mathcal H}_{\rD}$, the approximating triangle for
the $t$-structure $\overline{{\rD}}$ is the 
short exact sequence for this torsion pair.
\end{lemma}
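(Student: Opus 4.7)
The plan is to treat the left tilting case in detail and then deduce the right tilting case from it by a shift.

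For the left tilting case, I would first record the dual chain $\rD^{\geq 1}\subseteq \overline{\rD}^{\geq 0}\subseteq \rD^{\geq 0}$, which follows from the hypothesis by the standard orthogonality characterization of $t$-structures (an object of $\overline{\rD}^{\geq 1}$ is right-orthogonal to $\rD^{\leq 0}$, hence lies in $\rD^{\geq 1}$, and symmetrically for the other inclusion). With these four inclusions in hand, the verification that $(\mathcal{T},\mathcal{F}):=(\overline{\rD}^{\leq -1}\cap\mathcal{H}_\rD,\overline{\rD}^{\geq 0}\cap\mathcal{H}_\rD)$ is a torsion pair in $\mathcal{H}_\rD$ is immediate: the $\Hom$-vanishing between $\mathcal{T}$ and $\mathcal{F}$ is inherited from orthogonality of the two aisles of $\overline{\rD}$. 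For the torsion decomposition, take $A\in\mathcal{H}_\rD$; since $A\in\rD^{\leq 0}\subseteq\overline{\rD}^{\leq 0}$, the $\overline{\rD}$-truncation yields a triangle $\overline{\tau}^{\leq -1}A \to A \to \overline{\tau}^{\geq 0}A$. Two of the four inclusions place the outer terms in $\rD^{\leq 0}$ and $\rD^{\geq 0}$ respectively, and then chasing the triangle with $A\in\mathcal{H}_\rD$ and the remaining two inclusions shows that both outer terms actually lie in $\mathcal{H}_\rD$. The triangle is therefore a short exact sequence $0\to t(A)\to A\to f(A)\to 0$ in $\mathcal{H}_\rD$, which simultaneously exhibits the torsion pair decomposition and proves the final assertion of the lemma about the approximating triangle.

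Next, to identify $\overline{\rD}$ with $\rD_{(\mathcal{T},\mathcal{F})}$ it suffices to match the $\leq 0$-aisles. For $C\in \overline{\rD}^{\leq 0}\subseteq\rD^{\leq 1}$, the $\rD$-truncation triangle $\tau_\rD^{\leq 0}C\to C\to H_\rD^1(C)[-1]$ has its first two terms in $\overline{\rD}^{\leq 0}$, and since this aisle is closed under cones we get $H_\rD^1(C)[-1]\in\overline{\rD}^{\leq 0}$, i.e.\ $H_\rD^1(C)\in \overline{\rD}^{\leq -1}\cap\mathcal{H}_\rD=\mathcal{T}$. Conversely, if $H_\rD^1(C)\in\mathcal{T}$ and $H_\rD^i(C)=0$ for $i>1$, the same triangle places $\tau_\rD^{\leq 0}C\in\rD^{\leq 0}\subseteq\overline{\rD}^{\leq 0}$ and $H_\rD^1(C)[-1]\in\mathcal{T}[-1]\subseteq\overline{\rD}^{\leq 0}$, forcing $C\in\overline{\rD}^{\leq 0}$.

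The only real --- and rather mild --- obstacle is bookkeeping: one must keep straight which of the four inclusions is invoked at each step, and in particular exploit the full strength of the hypothesis to force the outer terms of the $\overline{\rD}$-truncation of $A\in\mathcal{H}_\rD$ to sit inside $\mathcal{H}_\rD$ and not merely in a wider band. Finally, the right tilting statement is obtained by applying the already-established left tilting case to the shifted $t$-structure $\overline{\rD}[-1]$: its $\leq 0$-aisle is $\overline{\rD}^{\leq 1}$, so the right tilting hypothesis $\rD^{\leq -1}\subseteq\overline{\rD}^{\leq 0}\subseteq\rD^{\leq 0}$ rewrites as $\rD^{\leq 0}\subseteq\overline{\rD}[-1]^{\leq 0}\subseteq\rD^{\leq 1}$; the torsion pair produced is $(\overline{\rD}^{\leq 0}\cap\mathcal{H}_\rD,\overline{\rD}^{\geq 1}\cap\mathcal{H}_\rD)$, and shifting the identification $\overline{\rD}[-1]=\rD_{(\mathcal{T},\mathcal{F})}$ back by $[1]$ yields $\overline{\rD}=\tilde{\rD}_{(\mathcal{T},\mathcal{F})}$, with the approximating-triangle assertion transported unchanged.
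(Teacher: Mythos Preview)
Your proof is correct and complete. The paper itself does not give a proof of this lemma at all: it simply attributes the statement to Polishchuk \cite[Lemma~1.2.2]{Po} and moves on. Your argument is the standard one --- deduce the dual chain of coaisles by orthogonality, use the $\overline{\rD}$-truncation of an object of $\mathcal{H}_\rD$ to produce the torsion decomposition (and simultaneously the approximating-triangle claim), and then match the $\leq 0$-aisles via the $\rD$-truncation triangle --- and it is essentially what one finds in Polishchuk's paper, so there is nothing to contrast.
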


\begin{remark}\label{Rem:FMT}
In the work \cite{FMT} and \cite{V} the authors propose a generalization of the previous result.
In \cite[Theorem 2.14 and 4.3]{FMT} the authors proved that, under some technical hypotheses,
 given any pair of $t$-structures ${\rD}$,  
 $\overline{\rD}$  satisfying the condition:
\[ {\rD}^{\leq 0}\subseteq {\overline{\rD}}^{\leq 0}\subseteq{{\rD}}^{\leq \ell}\]
one can recover the $t$-structure ${\overline{\rD}}$ by an iterated procedure of left tilting of
length $\ell$ starting with $\rD$. Equivalently the $t$-structure $\rD$ can be obtained  
by an iterated procedure of right tilting of
length $\ell$ starting with ${\overline{\rD}}$.

In particular by  \cite[Lemma 2.10 (ii)]{FMT} these hypotheses are 
fulfilled whenever,
following the definition of Keller and Vossieck \cite{KV} (cf. also \cite[Definition 6.8]{FMT}),
the $t$-structure ${\overline{\rD}}$ is  \emph{left ${\rD}$-compatible} i.e.
the class ${\overline{\rD}}^{\leq 0}$ is stable under the left truncations $\tau_{\rD}^{\leq k}$
 of $\rD$ for any $k\in \Bbb Z$.
 \end{remark}
 \bigskip
\section{$t$-structures on $\rD^\rb_{\hol}(\shd_{X\times S/S})$}

Following the notation of the introduction,
we denote by $\DXS$ the subsheaf of $\shd_{X\times S}$ of relative differential operators
with respect to $p_X$ and by
$\rD^\rb_{\coh}(\shd_{X\times S/S})$ the bounded derived category
of left $\shd_{X\times S/S}$-modules with coherent cohomologies.
As in the absolute case (in which $S$ is a point) the  category
$\rD^\rb_{\coh}(\shd_{X\times S/S})$
is endowed with a duality functor: given $\shm\in \rD^\rb_{\coh}(\shd_{X\times S/S})$
we set
\[\bD(\shm):=\rh_{\DXS}(\shm, \DXS\otimes_{\sho_{X\times S}}\Omega^{\otimes^{-1}}_{X\times S/S})[n]\] 
(with $n=d_X$) where $\Omega_{X\times S/S}$ denotes the sheaf of relative differential forms of maximal degree, hence 
$\shm\stackrel{\cong}\to \bD\bD\shm$
(since, as explained in \cite[Proposition 3.2]{MFCS1},
any coherent $\DXS$-module locally admits a free resolution of length at most $2n+\ell$ with  $\ell=d_S$, see also 
\cite{JEB}).
 In  \cite[3.4]{MFCS1} the authors 
proved that the dual of a holonomic $\DXS$-module
is an object in $\rD^\rb_{\hol}(\shd_{X\times S/S})$ (the bounded derived category
of left $\shd_{X\times S/S}$-modules with holonomic cohomologies; \cite[Corollary 3.6]{MFCS1}).
Hence the previous duality restricts into a duality in
$\rD^\rb_{\hol}(\shd_{X\times S/S})$,
but despite the absolute case it is no longer true that the dual of a holonomic $\DXS$-module
is a holonomic $\DXS$-module.

Due to the previous considerations, we can endow the triangulated category
$\rD^\rb_{\hol}(\shd_{X\times S/S})$ with two $t$-structures $P$ and $\Pi$:
 we denote by $P$ the natural $t$-structure 
 and by $\Pi$ its dual $t$-structure with respect to the functor $\bD$. 
Thus, by definition, complexes in ${}^{P}\rD_{\hol}^{\leq 0}(\shd_{X\times S/S})$ (respectively ${}^{P}\rD_{\hol}^{\geq 0}(\shd_{X\times S/S})$) are isomorphic in 
$\rD^\rb_{\hol}(\shd_{X\times S/S})$ to complexes of $\shd_{X\times S/S}$-modules which have zero entries in positive (respectively negative) degrees and holonomic cohomologies.
The dual $t$-structure $\Pi$ is by definition:
\[
\begin{matrix}
{}^{\Pi}\rD_{\hol}^{\leq 0}(\shd_{X\times S/S})= & \{ \shm
\in \rD^\rb_{\hol}(\shd_{X\times S/S})\; | \; 
\bD \shm\in {}^{P}\rD_{\hol}^{\geq 0}(\shd_{X\times S/S})  \} \hfill\\
{}^{\Pi}\rD_{\hol}^{\geq 0}(\shd_{X\times S/S})= & 
\{\shm\in \rD^\rb_{\hol}(\shd_{X\times S/S})\; | \; 
\bD \shm\in {}^{P}\rD_{\hol}^{\leq 0}(\shd_{X\times S/S})  \}. \\
\end{matrix}
\]

\medskip

\begin{remark}\label{Rem:Kash}
We have the following statements:
\begin{enumerate}
\item{If $S=\{pt\}$ then $\Pi=P$ (cf.\cite[4.11]{Ka2}).}
\item{If $X=\{pt\}$ then $P$ is nothing more than the natural $t$-structure in $\rD^\rb_{\coh}(\sho_S)$ and $\Pi$ is  its dual $t$-structure with respect to the functor
$\bD(\cdot):=\rh_{\sho_S}(\cdot, \sho_S)$ described by Kashiwara in \cite[\S 4, Proposition 4.3]{Ka4}} which we shall denote by $\pi$:

\[
\begin{matrix}
{}^{\pi}\rD_{\coh}^{\leq 0}(\sho_S)= & \{ \shm
\in \rD^\rb_{\coh}(\sho_S)\; | \; 
\codim{\mathrm{Supp}}(\shh^k(\shm))\geq k\} \hfill\\
{}^{\pi}\rD_{\coh}^{\geq 0}(\sho_S)= & 
\{\shm\in \rD^\rb_{\coh}(\sho_S)\; | \; 
\shh_{[Z]}^k(\shm_{|  U})=0 \hbox{ for any analytic closed subset $Z$}\\
&\hfill  \hbox{of any  open subset $U\subseteq S$ and } k<\codim _UZ  \}. \\
\end{matrix}
\]

\end{enumerate}
\end{remark}
Recall that, following \cite{MFCS1}, for $s\in S$ on denotes by $Li^{*}_s$ the derived functor $p_X^{-1}(\sho_S/m)\overset{L}{\otimes}_{p_X^{-1}\sho_S}(\cdot)$ where $m$ is the maximal ideal of functions vanishing at $s$.

\begin{lemma}\label{LMFCS}
Let consider the functors $L i^*_s:\rD^\rb_\hol(\DXS)\to \rD^\rb_\hol({\mathcal D}_X)$
with $s$ varying in $S$. The following holds true:
\begin{enumerate}
\item the complex $\shm\in\rD^\rb_\hol(\DXS)$ is isomorphic to $0$ if and only if
$L i^*_s\shm=0$ for any $s$ in $S$;
\item $L i^*_s\shm\in \rD_{\hol}^{\leq k}({\mathcal D}_X)$ for each $s\in S$ if and only if 
$\shm\in {}^{P}\rD_{\hol}^{\leq k}(\DXS)$;
\item if $L i^*_s\shm\in \rD_{\hol}^{\geq k}({\mathcal D}_X)$ for each $s\in S$ then
$\shm\in {}^{P}\rD_{\hol}^{\geq k}(\DXS)$;
\item $L i^*_s\shm\in \rD_{\hol}^{\geq k}({\mathcal D}_X)$ for each $s\in S$ if and only if $\shm\in {}^{\Pi}\rD_{\hol}^{\geq k}(\DXS)$.
\end{enumerate}
\end{lemma}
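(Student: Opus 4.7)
The plan is to prove the four items in the order (1), (2), (4), (3), deducing (3) from (4) together with the relative analogue of Kashiwara's purity for $\bD$.

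For (1), only the converse is non-trivial. Assuming $\shm\neq 0$, set $k_0\defin\max\{k:\shh^k(\shm)\neq 0\}$. Right exactness of $p_X^{-1}(\sho_S/m_s)\otimes_{p_X^{-1}\sho_S}(\cdot)$ at the top degree yields $\shh^{k_0}(Li_s^*\shm)\cong i_s^*\shh^{k_0}(\shm)$, so the hypothesis forces $i_s^*\shh^{k_0}(\shm)=0$ for every $s\in S$. The statement then reduces to the fibre-wise Nakayama claim that a non-zero coherent $\DXS$-module has non-vanishing restriction $i_{s_0}^*$ at some $s_0\in S$, proved by applying Nakayama's lemma modulo $m_{s_0}$ to a local finite presentation at a point of its support. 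Statement (2) follows quickly: the easy direction represents $\shm\in{}^P\rD_\hol^{\leq k}(\DXS)$ by a bounded-above $p_X^{-1}\sho_S$-flat resolution concentrated in degrees $\leq k$, so $Li_s^*\shm\in\rD_\hol^{\leq k}(\shd_X)$; the converse is the same top-cohomology computation as in (1).

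The heart of the lemma is (4). Its proof relies on the base-change isomorphism $Li_s^*\bD\shm\cong\bD_X Li_s^*\shm$, valid because the fibre inclusion $i_s\colon X\times\{s\}\hookrightarrow X\times S$ is non-characteristic to any holonomic complex (its characteristic variety being contained in $\Lambda\times S$ with $\Lambda\subset T^*X$). Combining this with (2) applied to $\bD\shm$ and with the absolute identity $\Pi=P$ in $\rD^\rb_\hol(\shd_X)$ recalled in Remark~\ref{Rem:Kash}(1) yields
\[
\shm\in{}^{\Pi}\rD_\hol^{\geq k}(\DXS)\iff\bD\shm\in{}^{P}\rD_\hol^{\leq -k}(\DXS)\iff Li_s^*\shm\in\rD_\hol^{\geq k}(\shd_X)\text{ for every }s\in S.
\]

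Finally, (3) follows from (4) and the inclusion ${}^{\Pi}\rD_\hol^{\geq k}(\DXS)\subseteq{}^{P}\rD_\hol^{\geq k}(\DXS)$. For $\shm\in{}^{\Pi}\rD_\hol^{\geq k}$ one has $\bD\shm\in{}^{P}\rD_\hol^{\leq -k}$, and using the relative purity statement that $\bD$ of a single holonomic $\DXS$-module lies in $\rD_\hol^{\geq 0}$, the spectral sequence $E_2^{p,q}=\shh^p(\bD\shh^{-q}\shm)\Rightarrow\shh^{p+q}(\bD\shm)$ shows that $\bD$ sends ${}^{P}\rD_\hol^{\leq -k}$ into ${}^{P}\rD_\hol^{\geq k}$, so $\shm\cong\bD\bD\shm\in{}^{P}\rD_\hol^{\geq k}$. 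The principal obstacles are the two foundational inputs drawn from \cite{MFCS1}: the Nakayama-type fibre-detection for coherent $\DXS$-modules underlying (1), and the non-characteristic base-change identity $Li_s^*\bD\cong\bD_X Li_s^*$ used in (4); once these are granted, all remaining steps are formal consequences of the definitions of $P$ and $\Pi$ together with relative purity.
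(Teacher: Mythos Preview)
Your argument is correct, and for item~(4) it is identical to the paper's: both apply~(2) to $\bD\shm$, use the commutation $Li_s^*\bD\cong\bD_X Li_s^*$, and invoke the $t$-exactness of $\bD$ on $\rD^\rb_\hol(\shd_X)$. For items~(1)--(3) the paper simply refers to \cite[Corollary~1.11]{MFCS2}, whereas you supply the Nakayama argument for~(1)--(2) explicitly and then take a different route to~(3): rather than a direct fibre-wise cohomology computation, you deduce~(3) from~(4) together with the inclusion ${}^{\Pi}\rD_\hol^{\geq k}\subseteq{}^{P}\rD_\hol^{\geq k}$, which you obtain from the relative purity $\bD(\Mod_\hol(\DXS))\subseteq{}^P\rD_\hol^{\geq 0}$ via the hypercohomology spectral sequence. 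Note that in the paper this same inclusion appears as Lemma~\ref{Lhol1}, but is proved there \emph{using}~(3); your argument is not circular because relative purity follows independently from Kashiwara's vanishing \cite[Theorem~2.19(1)]{Ka2} and the faithful flatness of $\shd_{X\times S}$ over $\DXS$ (exactly the mechanism later invoked in Lemma~\ref{Lsigma2} and Step~2 of Theorem~\ref{PrOp:PiSupp}). Your organization thus inverts the paper's logical dependence between~(3) and Lemma~\ref{Lhol1}, trading a direct Nakayama-type proof of~(3) for a cleaner deduction from duality; both approaches rest on the same two foundational inputs you identify.
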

\begin{proof}
These statements are a slight generalization of \cite[Corollary 1.11]{MFCS2}, with exactly the same idea of proof.
In particular  $(4)$ can be deduced by duality from $(2)$ since
we can characterize the objets in ${}^{\Pi}\rD_{\hol}^{\geq 0}(\shd_{X\times S/S})$ as follows:

\begin{eqnarray*}
&\shm\in{}^{\Pi}\rD_{\hol}^{\geq 0}(\shd_{X\times S/S}) \Longleftrightarrow 
\bD \shm\in {}^{P}\rD_{\hol}^{\leq 0}(\shd_{X\times S/S}) \hfill \\
&\stackrel{\hbox{by (2)}}\Longleftrightarrow 
\shm\in \rD_{\hol}^{\rb}(\shd_{X\times S/S}) \; \hbox{ and }
\forall s\in S, \; Li^*_s\bD\shm\overset{\ast}{\cong} \bD Li^*_s\shm\in\rD_{\hol}^{\leq 0}(\shd_{X})\hfill 
\\
&\Longleftrightarrow
\shm\in \rD_{\hol}^{\rb}(\shd_{X\times S/S}) \; \hbox{ and }
\forall s\in S, Li^{*}_s\shm\in \rD_{\hol}^{\geq 0}(\shd_{X})\hfill  \\
\end{eqnarray*}
where the last equivalence holds true since in the absolute case the functor $\bD$
 on $\rD^{\rb}_{\hol}(\shd_{X})$ is exact with respect to the
natural $t$-structure. As a morphism, $\ast: Li^*_s\bD\shm\to\bD Li^*_s\shm$ is an application of \cite[(A.10)]{Ka2} and it is an isomorphism because $Li^*_s$ is the derived tensor product of a coherent module ($p^{-1}\sho_S/m$) over a coherent sheaf ($p^{-1}\sho_S$).
\end{proof}

\begin{lemma}\label{Lhol1}
We have the double inclusion 
$${}^{\Pi}\rD_{\hol}^{\leq -\ell}(\shd_{X\times S/S})\subseteq {}^{P}\rD_{\hol}^{\leq 0}(\shd_{X\times S/S})\subseteq {}^{\Pi}\rD_{\hol}^{\leq 0}(\shd_{X\times S/S})$$
 hence, given $\shm$ a holonomic $\DXS$-module, its 
dual satisfies \[\bD\shm\in {\;}^P\rD^{[0,\ell]}_{\hol}(\DXS).\]
\end{lemma}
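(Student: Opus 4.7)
The plan is to reduce both inclusions to the absolute case via the fiberwise characterizations of $P$ and $\Pi$ furnished by Lemma~\ref{LMFCS}, using two additional ingredients: the commutation isomorphism $Li^*_s\bD\shm\cong\bD Li^*_s\shm$ (already invoked in the proof of Lemma~\ref{LMFCS}) and the standard fact that, since the ideal $m\subset\sho_S$ of a point $s\in S$ admits a Koszul resolution of length~$\ell$, the functor $Li^*_s=p_X^{-1}(\sho_S/m)\otimes^L_{p_X^{-1}\sho_S}(\cdot)$ has cohomological amplitude contained in $[-\ell,0]$. In particular $Li^*_s$ preserves $\rD^{\leq k}$ and sends $\rD^{\geq k}$ into $\rD^{\geq k-\ell}$. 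Once these tools are in place the inclusions will follow from the $t$-exactness of $\bD$ on $\rD_\hol^\rb(\shd_X)$ (\cite[4.11]{Ka2}).

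For the right inclusion $\shm\in{}^P\rD^{\leq 0}_\hol(\DXS)\subseteq{}^\Pi\rD^{\leq 0}_\hol(\DXS)$, I would start from Lemma~\ref{LMFCS}(2), which gives $Li^*_s\shm\in\rD^{\leq 0}_\hol(\shd_X)$ for every $s\in S$. Applying absolute duality, which is $t$-exact, yields $\bD Li^*_s\shm\in\rD^{\geq 0}_\hol(\shd_X)$, and the commutation isomorphism identifies this with $Li^*_s\bD\shm$. Lemma~\ref{LMFCS}(3) then gives $\bD\shm\in{}^P\rD^{\geq 0}_\hol(\DXS)$, i.e.\ $\shm\in{}^\Pi\rD^{\leq 0}_\hol(\DXS)$.

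For the left inclusion $\shm\in{}^\Pi\rD^{\leq -\ell}_\hol(\DXS)\subseteq{}^P\rD^{\leq 0}_\hol(\DXS)$, by definition of $\Pi$ one has $\bD\shm\in{}^P\rD^{\geq \ell}_\hol(\DXS)$. For any $s\in S$, the amplitude bound on $Li^*_s$ gives $Li^*_s\bD\shm\in\rD^{\geq\ell-\ell}_\hol(\shd_X)=\rD^{\geq 0}_\hol(\shd_X)$; via the commutation isomorphism and the $t$-exactness of absolute $\bD$, this forces $Li^*_s\shm\in\rD^{\leq 0}_\hol(\shd_X)$. Lemma~\ref{LMFCS}(2) then delivers $\shm\in{}^P\rD^{\leq 0}_\hol(\DXS)$. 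The single slightly delicate point is the amplitude estimate for $Li^*_s$, but this is a direct Koszul computation and the main conceptual obstacle (commutation of $\bD$ with $Li^*_s$) has already been used in proving Lemma~\ref{LMFCS}.

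Finally, the statement on $\bD\shm$ for a holonomic module follows by combining the two inclusions: shifting the left inclusion by $[1]$ gives ${}^\Pi\rD^{\leq -\ell-1}_\hol\subseteq{}^P\rD^{\leq -1}_\hol$, equivalently ${}^P\rD^{\geq 0}_\hol\subseteq{}^\Pi\rD^{\geq -\ell}_\hol$. A holonomic $\shm$ lies in ${}^P\rD^{\leq 0}_\hol\cap{}^P\rD^{\geq 0}_\hol$, hence in ${}^\Pi\rD^{\leq 0}_\hol\cap{}^\Pi\rD^{\geq -\ell}_\hol$, which translates into $\bD\shm\in{}^P\rD^{\geq 0}_\hol\cap{}^P\rD^{\leq \ell}_\hol={}^P\rD^{[0,\ell]}_\hol(\DXS)$.
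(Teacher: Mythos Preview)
Your proof is correct and follows essentially the same route as the paper's own argument: both inclusions are reduced to the absolute case via Lemma~\ref{LMFCS}, the commutation $Li^*_s\bD\cong\bD Li^*_s$, and the $t$-exactness of $\bD$ on $\rD^\rb_\hol(\shd_X)$. The only difference is that you spell out the Koszul amplitude estimate for $Li^*_s$ and the orthogonality-and-shift deduction of $\bD\shm\in{}^P\rD^{[0,\ell]}_\hol$, whereas the paper leaves both steps implicit.
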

\begin{proof} In general,  if $\shm\in {}^{P}\rD_{\hol}^{\leq 0}(\shd_{X\times S/S})$,
 by the right exactness of $i^*_s$ we deduce that
for any $s\in S$ the complex $Li^*_s\shm$ belongs to $ \rD_{\hol}^{\leq 0}(\shd_{X})$ and hence 
$Li^*_s\bD\shm\cong \bD Li^*_s\shm\in \rD_{\hol}^{\geq 0}(\shd_{X})$ thus, according to $(3)$ of Lemma~\ref{LMFCS}, $\bD\shm\in {}^{P}\rD_{\hol}^{\geq 0}(\shd_{X\times S/S})$ and so $\shm\in {}^{\Pi}\rD_{\hol}^{\leq 0}(\shd_{X\times S/S}).$

According to the definitions,
Lemma \ref{LMFCS} and by the $t$-exactness of the functor $\bD$ in the absolute case, we have the following chain: 
\begin{eqnarray*}
&\shm\in {}^{\Pi}\rD_{\hol}^{\leq-\ell}(\shd_{X\times S/S})\Longleftrightarrow \bD\shm\in {}^{P}\rD_{\hol}^{\geq \ell}(\shd_{X\times S/S})\\ 
&\Rightarrow \forall s\in S, Li^*_s\bD\shm\cong \bD Li^*_s\shm\in\rD_{\hol}^{\geq 0}(\shd_{X})\Leftrightarrow \forall s\in S, Li^{*}_s\shm\in \rD_{\hol}^{\leq 0}(\shd_{X})\\
&\Longleftrightarrow \shm\in {}^{P}\rD_{\hol}^{\leq 0}(\shd_{X\times S/S}). \\
\end{eqnarray*}
\end{proof}

Following \cite{Sa1}, $p_X^{-1}\sho_S$-flat holonomic $\DXS$-modules are called {\emph{strict}}.
The following result will be useful in the sequel:
\begin{lemma}\label{LH0}
Let $\shn\in {}^P\rD^{\leq 0}_{\hol}(\DXS)$. Then
$\bD\shn$ is quasi-isomorphic to a bounded complex 
$\shf^{\scriptscriptstyle \bullet}$ of
coherent $\DXS$-modules
whose terms in negative degrees are zero while the terms in positive degrees
are strict coherent $\DXS$-modules. 
In particular $\shh^0\bD\shn$ is 
torsion free.
\end{lemma}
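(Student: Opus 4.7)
The plan is to begin with Lemma~\ref{Lhol1}, which places $\bD\shn$ in ${}^P\rD_{\hol}^{[0,\ell]}(\DXS)$, and then to extract the complementary fibrewise vanishing: since $\bD\bD\shn\simeq\shn\in{}^P\rD_{\hol}^{\leq 0}(\DXS)$ by hypothesis, the very definition of $\Pi$ gives $\bD\shn\in{}^{\Pi}\rD_{\hol}^{\geq 0}(\DXS)$, whence Lemma~\ref{LMFCS}(4) yields $Li^*_{s}\bD\shn\in\rD_{\hol}^{\geq 0}(\shd_X)$ for every $s\in S$.

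I would then use that $\DXS$ has finite global homological dimension (cf.\ \cite[Proposition~3.2]{MFCS1}) to choose, locally on $\XS$, a bounded quasi-isomorphism $\shl^{\bullet}\to\bD\shn$ in which each $\shl^i$ is a finitely generated locally free $\DXS$-module, hence $\pOS$-flat. Because $\shh^i(\shl^{\bullet})=0$ for $i<0$, the naive truncation $\shf^{\bullet}$ defined by $\shf^i=0$ for $i<0$, $\shf^0\defin\coker(\shl^{-1}\to\shl^0)$, and $\shf^i\defin\shl^i$ for $i\geq 1$, is a bounded complex of coherent $\DXS$-modules quasi-isomorphic to $\bD\shn$, it vanishes in negative degrees, and its positive-degree terms are locally free, in particular strict.

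For the ``in particular'' assertion, I observe that $\shl^{\leq 0}$ is a bounded $\pOS$-flat resolution of $\shf^0$, and that for $i\geq 1$ the differentials emanating from strictly positive degrees of $\shl^{\bullet}$ do not enter the computation of $\shh^{-i}(\shl^{\bullet}\otimes^L_{\pOS}\sho_s)$. Consequently
\[
\mathrm{Tor}_i^{\pOS}(\sho_s,\shf^0)\simeq\shh^{-i}(\shl^{\bullet}\otimes^L_{\pOS}\sho_s)\simeq\shh^{-i}(Li^*_s\bD\shn)=0\qquad(i\geq 1,\ s\in S).
\]
Applying the local criterion of flatness to the coherent $\DXS$-module $\shf^0$ forces $\shf^0$ to be $\sho_S$-flat, in particular $\sho_S$-torsion free; as $\shh^0\bD\shn=\ker(\shf^0\to\shf^1)$ embeds into $\shf^0$, the required torsion-freeness follows.

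The hard part will be the justification of the local flatness criterion in the present context: $\shf^0$ is not $\sho_S$-finitely generated, so one has to descend to good filtrations whose graded pieces are $\sho_{T^*X\times S}$-coherent in order to reduce the question to the classical Noetherian-local statement. This reduction is the kind of argument standardly available within the coherence framework for relative $\shd$-modules of \cite{MFCS1}.
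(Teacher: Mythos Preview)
Your argument is correct and follows essentially the same route as the paper's: both replace $\bD\shn$ by a bounded complex of locally free $\DXS$-modules (the paper obtains it by dualizing a free resolution of $\shn$, you resolve $\bD\shn$ directly), truncate at degree~$0$ to produce $\shf^{\bullet}$, and then verify that $Li^*_s\shf^0$ is concentrated in degree~$0$ for every $s$ (the paper via the triangle $\shl^{*\geq 1}\to\shf^{\bullet}\to\shf^0[0]$, you via the identification $\mathrm{Tor}_i^{\pOS}(\sho_s,\shf^0)\simeq\shh^{-i}Li^*_s\bD\shn$). What you flag as ``the hard part''---passing from fibrewise Tor-vanishing to strictness of the coherent $\DXS$-module $\shf^0$---is exactly the content of \cite[Lemma~1.13]{MFCS2}, which the paper simply cites; so rather than re-deriving a local flatness criterion through good filtrations, you may invoke that lemma directly and the proof is complete.
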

\begin{proof}
Since any 
coherent $\shd_{X\times S/S}$-module  locally admits a resolution 
of finite length
 by free $\shd_{X\times S/S}$-modules of finite rank, any complex $\shn\in {}^{P}\rD_{\hol}^{\leq 0}(\shd_{X\times S/S})$
 locally admits a  resolution $\shl^{\scriptscriptstyle \bullet}$ 
 by free $\shd_{X\times S/S}$-modules of finite rank such that
 $\shl^i=0$ for any $i>0$ and for $i\ll 0$.
Thus $\bD\shn$ can be represented locally by the complex
$ \shl^{\ast{\scriptscriptstyle \bullet}}:=
\mathcal{H}\mathit{om}_{\DXS}(\shl^{\scriptscriptstyle \bullet}, \DXS\otimes_{\sho_{X\times S}}\Omega^{\otimes^{-1}}_{X\times S/S})[n]$
 whose terms are free $\shd_{X\times S/S}$-modules of finite rank and whose cohomology
 in negative degrees is zero.
 By the assumption 

 $\bD\shn\simeq {}^P\tau^{\geq 0}(\bD\shn)\simeq\shf^{\scriptscriptstyle \bullet}\in
 {}^{\Pi}\rD_{\hol}^{\geq 0}(\shd_{X\times S/S})$ (since $\shn\in {}^P\rD^{\leq 0}_{\hol}(\DXS)$) with
 \[
\shf^{\scriptscriptstyle \bullet}:= 
 \cdots 0\longrightarrow 0\longrightarrow \Coker(d^{-1}_{\shl^{\ast{\scriptscriptstyle \bullet}}})
\longrightarrow 
 \shl^{\ast 1}\stackrel{d^1_{\shl^{\ast{\scriptscriptstyle \bullet}}}}\longrightarrow  \shl^{\ast 2} \longrightarrow \cdots
 \]
 where $\Coker(d^{-1}_{\shl^{\ast{\scriptscriptstyle \bullet}}})$ is placed in degree $0$.
It remains to prove that $\Coker(d^{-1}_{\shl^{\ast{\scriptscriptstyle \bullet}}})$ is a strict
coherent $\DXS$-module.  
 
Let us consider the distinguished triangle  induced by
the following short exact sequence of complexes of coherent $\DXS$-modules: 

 \[
\xymatrix@-10pt{
\shl^{\ast \geq 1}\ar[d] & \cdots\ar[r]\ar[d] &0\ar[r]\ar[d]&  0\ar[r]\ar[d]		& \shl^{\ast 1}	\ar[r]\ar[d]&
\shl^{\ast 2}\ar[r]\ar[d] &
\cdots\ar[d] \\
\shf^{\scriptscriptstyle \bullet}\ar[d] &\cdots \ar[r]\ar[d] & 0\ar[r]\ar[d] & \Coker(d^{-1}_{\shl^{\ast\scriptscriptstyle \bullet}})\ar[r]\ar[d] & \shl^{\ast 1}\ar[r] \ar[d] &\shl^{\ast 2 }\ar[r] \ar[d] &  \cdots\ar[d]\\
\Coker(d^{-1}_{\shl^{\ast\scriptscriptstyle \bullet}})[0] &  \cdots\ar[r] & 0\ar[r]&	 \Coker(d^{-1}_{\shl^{\ast\scriptscriptstyle \bullet}})\ar[r] & 0\ar[r] &
0\ar[r] & \cdots \\
}
\]
The triangle
$Li^{*}_s\shl^{\ast \geq 1}\to Li^{*}_s\shf^{\scriptscriptstyle \bullet}\to Li^{*}_s(\Coker(d^{-1}_{\shl^{\ast\scriptscriptstyle \bullet}}))\stackrel{+}\to$ is distinguished,
since each $\shl^{i\ast}$ is  strict, 
 $Li^{*}_s\shl^{\ast \geq 1}\in \rD_{\coh}^{\geq 1}(\shd_{X})$ while
$Li^{*}_s\shf^{\scriptscriptstyle \bullet}\in \rD_{\coh}^{\geq 0}(\shd_{X})$ in view of Lemma~\ref{LMFCS} $(4)$.
Hence, for any $s\in S$,
$Li^{*}_s(\Coker(d^{-1}_{\shl^{\ast\scriptscriptstyle \bullet}}))\in \rD_{\hol}^{\geq 0}(\shd_{X})$, so $\shh^jLi^{*}_s(\Coker(d^{-1}_{\shl^{\ast\scriptscriptstyle \bullet}}))=0, \forall j \neq 0$ since $Li^{*}_s(\Coker(d^{-1}_{\shl^{\ast\scriptscriptstyle \bullet}}))\in \rD_{\hol}^{\leq 0}(\shd_{X})$. 
According to \cite[Lemma 1.13]{MFCS2} we conclude that $\Coker(d^{-1}_{\shl^{\ast\scriptscriptstyle \bullet}})$ is strict and so $\shh^0\bD\shn$ is torsion free.
\end{proof}

\remark\label{RH0} 
In accordance with  Lemma \ref{LH0}, if $\shm$ is a torsion module, $\shh^0\bD(\shm)$, being torsion free and a torsion module, is zero. 

When $d_ S=1$, it is well known that $p_X^{-1}\sho_S$-flatness is equivalent to absence of $p_X^{-1}\sho_S$-torsion,
hence a holonomic $\DXS$-module $\shm$ is strict if and only if
for any $f\in\sho_S$ the morphism $\shm\stackrel{f}\to\shm$ (multiplication by $f$) is
a monomorphism.

In this case, for a given coherent $\shd_{X\times S/S}$-module $\shm$, we denote by $\shm_{t}$ the coherent sub-module of sections locally annihilated by some $f\in\sho_S$ and we denote by $\shm_{tf}$ the quotient $\shm/\shm_t$. We denote by $\Mod_{\hol}(\shd_{X\times S})_t$ the full subcategory of holonomic
$\DXS$-modules satisfying $\shm_t\simeq \shm$ and by $\Mod_{\hol}(\shd_{X\times S/S})_{tf}$ the full subcategory of holonomic
$\DXS$-modules   satisfying $\shm\simeq \shm_{tf}$. 
The properties of torsion pair in $\Mod_{\hol}(\shd_{X\times S/S})$ are clearly satisfied by $(\Mod_{\hol}({\shd_{X\times S/S}})_{t}, \Mod_{\hol}({\shd_{X\times S/S}})_{tf})$.

Moreover this torsion pair is \emph{hereditary} i.e. the class of torsion modules 
(which coincides with the class of holonomic $\shd_{X\times S/S}$-modules $\shm$ satisfying $\dim p_X(\supp(\shm))=0$ 
plus the zero module)
is closed under sub-objects  and so it forms an abelian category.

\begin{Proposition}\label{Pholpi}
If $d_S=1$, $\Pi$ is the $t$-structure obtained by left tilting $P$ with respect to the torsion pair $(\Mod_{\hol}({\shd_{X\times S/S}})_{t}, \Mod_{\hol}({\shd_{X\times S/S}})_{tf})$ in $\Mod_{\hol}(\shd_{X\times S/S})$
 while $P$ is the $t$-structure obtained by right tilting $\Pi$ with respect to the torsion pair 
$(\Mod_{\hol}({\shd_{X\times S/S}})_{tf},\Mod_{\hol}({\shd_{X\times S/S}})_{t}[-1])$ in $\shh_{\Pi}$.
\end{Proposition}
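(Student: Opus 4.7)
The plan is to apply Polishchuk's Lemma~\ref{PrOp:Pol}. With $\ell=d_S=1$ Lemma~\ref{Lhol1} yields the inclusions
\[
{}^{\Pi}\rD_{\hol}^{\leq -1}(\DXS)\subseteq {}^{P}\rD_{\hol}^{\leq 0}(\DXS)\subseteq {}^{\Pi}\rD_{\hol}^{\leq 0}(\DXS),
\]
so $\Pi$ is the left tilt of $P$ with respect to a torsion pair $(\mathcal{T},\mathcal{F})$ on $\Mod_{\hol}(\DXS)$, and using $\bD\shm\in {}^{P}\rD^{[0,1]}$ (Lemma~\ref{Lhol1}) this pair reads
\[
\mathcal{T}=\{\shm\mid \shh^0\bD\shm=0\},\qquad \mathcal{F}=\{\shm\mid \shh^1\bD\shm=0\}.
\]
The bulk of the work is to identify $\mathcal{T}$ with $\Mod_{\hol}(\DXS)_{t}$ and $\mathcal{F}$ with $\Mod_{\hol}(\DXS)_{tf}$.

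Two of the four inclusions are immediate. For $\Mod_{\hol}(\DXS)_{t}\subseteq \mathcal{T}$ I would quote Remark~\ref{RH0}. For $\Mod_{\hol}(\DXS)_{tf}\subseteq \mathcal{F}$: strict means $\pOS$-flat, so $Li_s^*\bD\shm\cong \bD(i_s^*\shm)$ is a single holonomic $\shd_X$-module by the $t$-exactness of $\bD$ in the absolute case, hence lies in $\rD^{\leq 0}_{\hol}(\shd_X)$ for every $s\in S$. Lemma~\ref{LMFCS}(2) then gives $\bD\shm\in {}^{P}\rD^{\leq 0}$, i.e.\ $\shh^1\bD\shm=0$.

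For $\mathcal{F}\subseteq \Mod_{\hol}(\DXS)_{tf}$, I would apply $\bD$ to the canonical truncation triangle $\shh^0\bD\shm\to \bD\shm\to \shh^1\bD\shm[-1]\xrightarrow{+}$ and invoke biduality to deduce the short exact sequence
\[
0\to \shh^1\bD(\shh^1\bD\shm)\to \shm\to \shh^0\bD(\shh^0\bD\shm)\to 0,
\]
whose right-hand term is torsion-free by Lemma~\ref{LH0} applied to the holonomic module $\shh^0\bD\shm$; if $\shh^1\bD\shm=0$ the first term vanishes and $\shm$ becomes torsion-free. For $\mathcal{T}\subseteq \Mod_{\hol}(\DXS)_{t}$, I would apply $\bD$ to the torsion sequence $0\to \shm_t\to \shm\to \shm_{tf}\to 0$: by the easy inclusions already proved, $\bD\shm_t$ sits in degree $1$ and $\bD\shm_{tf}$ in degree $0$, so the associated long exact sequence collapses to $\bD\shm_{tf}\cong \shh^0\bD\shm$. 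Hence $\shh^0\bD\shm=0$ forces $\bD\shm_{tf}=0$, and biduality gives $\shm_{tf}=\bD\bD\shm_{tf}=0$, so $\shm=\shm_t$ is torsion.

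The right-tilting statement will then be a formal consequence of Remark~\ref{Rem:HRStilt}: the tilted heart $\shh_{\Pi}$ inherits the torsion pair $(\Mod_{\hol}(\DXS)_{tf},\Mod_{\hol}(\DXS)_{t}[-1])$, and right tilting $\Pi$ with respect to this pair reconstructs $P$. The main obstacle I foresee is the reverse inclusion $\mathcal{T}\subseteq \Mod_{\hol}(\DXS)_{t}$: one must really exploit the interaction of the hypothesis $d_S=1$, which forces torsion-freeness to coincide with $\pOS$-flatness, with biduality, in order to kill the torsion-free quotient of $\shm$ via $\bD\shm_{tf}=0$.
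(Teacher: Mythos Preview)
Your proof is correct and follows the same overall strategy as the paper: apply Lemma~\ref{Lhol1} with $\ell=1$, invoke Polishchuk's Lemma~\ref{PrOp:Pol}, identify the resulting torsion pair $(\mathcal{T},\mathcal{F})$ with $(\Mod_{\hol}(\DXS)_{t},\Mod_{\hol}(\DXS)_{tf})$, and read off the right-tilting statement from Remark~\ref{Rem:HRStilt}.

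The execution differs in two minor ways. First, for the identification of $\mathcal{F}$ the paper simply quotes \cite[Proposition~2]{MFCS2}, which is precisely the statement ``$\shm$ strict $\Leftrightarrow \bD\shm$ is concentrated in degree~$0$ and strict''; you instead reprove both directions by hand (the forward one via $Li^*_s$ and Lemma~\ref{LMFCS}(2), the reverse via your biduality short exact sequence together with Lemma~\ref{LH0}). Second, you prove all four inclusions $\Mod_t\subseteq\mathcal{T}$, $\Mod_{tf}\subseteq\mathcal{F}$, $\mathcal{F}\subseteq\Mod_{tf}$, $\mathcal{T}\subseteq\Mod_t$ separately, whereas only the first two are needed: since $(\mathcal{T},\mathcal{F})$ and $(\Mod_t,\Mod_{tf})$ are both torsion pairs on $\Mod_{\hol}(\DXS)$, the inclusions $\Mod_t\subseteq\mathcal{T}$ and $\Mod_{tf}\subseteq\mathcal{F}$ already force equality (take right, resp.\ left, orthogonals). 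Your two extra arguments are valid but redundant; the paper's route is shorter at the price of an external citation, yours is self-contained.
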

\begin{proof}
By Lemma~\ref{Lhol1} we have 
\[{}^{\Pi}\rD_{\hol}^{\leq -1}(\shd_{X\times S/S})\subset
{}^{P}\rD_{\hol}^{\leq 0}(\shd_{X\times S/S})\subset {}^{\Pi}\rD_{\hol}^{\leq 0}(\shd_{X\times S/S})\subset
{}^{P}\rD_{\hol}^{\leq 1}(\shd_{X\times S/S})\]
(the last inclusion on the right is obtained by shifting by $[-1]$ the first one) 
and hence, by Polishchuk's result (Lemma~\ref{PrOp:Pol}), the $t$-structure $\Pi$
is obtained by left tilting $P$ with respect to the torsion pair
\[({}^{\Pi}\rD_{\hol}^{\leq -1}(\shd_{X\times S/S})\cap \Mod_{\hol}(\shd_{X\times S/S}), {}^{\Pi}\rD_{\hol}^{\geq 0}(\shd_{X\times S/S})\cap \Mod_{\hol}(\shd_{X\times S/S})).\] 

Also by Lemma~\ref{Lhol1}, if $\shm$ is holonomic, then $\bD\shm\in {}^{P}\rD^{[0,1]}_{\hol}(\shd_{X\times S/S})$, that is, $\bD\shm$ is concentrated in degrees $0$ and $1$.
The result will then be a consequence of the following statements:
\begin{itemize}
\item{$(i)$ $\shm$ is a strict holonomic module if and only if $\bD(\shm)$ is concentrated in degree zero and strict. }
\item{$(ii)$ If $\shm$ belongs to $\Mod_{\hol}(\shd_{X\times S/S})_t$ then $\bD(\shm)$ is concentrated in degree $1$ and ${}^P\shh^1(\bD\shm)$ belongs to $\Mod_{\hol}(\shd_{X\times S/S})_t$.}
\end{itemize}
Item $(i)$ is contained in Proposition 2 of \cite{MFCS2}. Therefore it remains to check item $(ii)$. 
Let $\shm\in\Mod_{\hol}(\shd_{X\times S/S})_t$.
First we remark that, by the functoriality of the action of $p_X^{-1}\sho_S$, all cohomology groups 
${}^P\shh^j(\bD\shm)$ belong to $\Mod_{\hol}(\shd_{X\times S/S})_t$. In accordance with Remark \ref{RH0}, 
${}^P\shh^0(\bD \shm)=0$. This ends the proof of $(ii)$ and proves that
$\shm$ belongs to $\Mod_{\hol}(\shd_{X\times S/S})_t$ if and only if $\bD(\shm)$ is concentrated in degree $1$ and ${}^P\shh^1(\bD\shm)$ belongs to $\Mod_{\hol}(\shd_{X\times S/S})_t$.
This proves the first statement. 

As a consequence, the heart of $\Pi$ can be described as
\[
\shh_{\Pi}=\{ \shm\in {}^{P}\rD^{[0,1]}_{\hol}(\shd_{X\times S/S})\; |\; {}^P\shh^0(\shm)\hbox{ strict and }  
{}^P\shh^1(\shm)\hbox{ torsion}\}
\]
and thus the $t$-structure $P$ is obtained by right tilting $\Pi$ with respect to 
$(\Mod_{\hol}({\shd_{X\times S/S}})_{tf},\Mod_{\hol}({\shd_{X\times S/S}})_{t}[-1])$ in $\shh_{\Pi}$
(cf.~\cite{HRS} and Remark~\ref{Rem:HRStilt}).  
\end{proof}

\begin{corollary}\label{chol1}
 If $d_ S=1$ then the full subcategory of strict holonomic $\shd_{X\times S/S}$-modules 
 (thus holonomic $\shd_{X\times S/S}$-modules with a strict holonomic dual) is quasi-abelian.
\end{corollary}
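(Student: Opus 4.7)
The plan is to realize the subcategory of strict holonomic $\DXS$-modules as the torsion-free part of a torsion pair in the abelian category $\Mod_{\hol}(\DXS)$, and then to invoke the general fact, recalled at the beginning of Section~1 after \cite{BonVdBergh}, that the torsion-free class of any torsion pair in an abelian category is quasi-abelian.

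First I would recall that, since $d_S=1$, absence of $p_X^{-1}\sho_S$-torsion is equivalent to $p_X^{-1}\sho_S$-flatness, so the strict holonomic modules are exactly the objects of $\Mod_{\hol}(\DXS)_{tf}$. As observed in the discussion preceding Proposition~\ref{Pholpi}, the pair $(\Mod_{\hol}(\DXS)_{t},\Mod_{\hol}(\DXS)_{tf})$ is a (hereditary) torsion pair on $\Mod_{\hol}(\DXS)$. The framework of Section~1 then provides, on $\Mod_{\hol}(\DXS)_{tf}$, kernels computed as in $\Mod_{\hol}(\DXS)$ and cokernels computed by applying the torsion-free reflection $f$ to cokernels in $\Mod_{\hol}(\DXS)$; moreover, short exact sequences in $\Mod_{\hol}(\DXS)_{tf}$ coincide with short exact sequences in $\Mod_{\hol}(\DXS)$ whose three terms are torsion free, and these are stable by pullbacks and pushouts. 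This gives the quasi-abelian structure.

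For the parenthetical identification — that the holonomic modules with a strict holonomic dual are exactly the strict holonomic modules — I would appeal to item~$(i)$ in the proof of Proposition~\ref{Pholpi}, which asserts that $\shm$ is strict holonomic if and only if $\bD\shm$ is concentrated in degree zero and strict; applying the biduality $\bD\bD\simeq\mathrm{id}$ yields the equivalence in both directions. Since all the substantive content has already been absorbed in Proposition~\ref{Pholpi} and in the torsion-pair formalism of Section~1, there is no serious obstacle here; the corollary is a direct extraction, and the only minor point to watch is the correct use of biduality to pass from the strictness of $\shm$ to that of $\bD\shm$.
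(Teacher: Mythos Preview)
Your proposal is correct and matches the paper's intended reasoning: the corollary is stated without proof precisely because it is an immediate consequence of the torsion pair $(\Mod_{\hol}(\DXS)_t,\Mod_{\hol}(\DXS)_{tf})$ recalled before Proposition~\ref{Pholpi}, the general fact from Section~1 (\cite{BonVdBergh}) that torsion-free classes are quasi-abelian, and item~$(i)$ in the proof of Proposition~\ref{Pholpi} together with biduality for the parenthetical identification. You have simply made explicit what the paper leaves to the reader.
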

Therefore the problem of expliciting $\Pi$ only matters for $d_S\geq 2$ and $d_X\geq 1$.
The following Lemmas permit to describe the $t$-structure $\Pi$ in
terms of support conditions as done by Kashiwara in the case of $X=\{pt\}$ (\cf \cite{Ka4}).

\begin{lemma}\label{Lem:red}
Let us
consider 
$F:\mathcal C\longrightarrow \overline{\mathcal C}$ a triangulated functor
between two triangulated categories $\mathcal C$ and $\overline{\mathcal C}$.
Let $P:=({}^P\rD^{\leq 0},{}^P\rD^{\geq 0})$ 
be a bounded $t$-structure on $\mathcal C$ and 
${}^{\overline{P}}\rD^{\leq 0}$ (resp. ${}^{\overline{P}}\rD^{\geq 0})$) a class on  $\overline{\mathcal C}$ 
closed under extensions and shift by $[1]$
(resp. closed under extensions and shift by $[-1]$).
 The following statements hold true:
\begin{enumerate}
\item the functor $F({}^P\rD^{\leq 0})\subseteq  {}^{\overline{P}}\rD^{\leq 0}$ if and only if  $F(\shh_P)\subseteq {}^{\overline{P}}\rD^{\leq 0}$;
\item the functor $F({}^P\rD^{\geq 0})\subseteq  {}^{\overline{P}}\rD^{\geq 0}$ if and only if  $F(\shh_P)\subseteq {}^{\overline{P}}\rD^{\geq 0}$;
\item the previous conditions are simultaneously satisfied if and only if   
$F(\shh_P)\subseteq \shh_{\overline{P}}$.
\end{enumerate}
\end{lemma}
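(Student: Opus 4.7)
The plan is to prove (1) by dévissage on the $P$-cohomological amplitude and then obtain (2) dually and (3) by conjunction. The forward implication in (1) is immediate since $\shh_P\subseteq {}^P\rD^{\leq 0}$, so the content lies in the converse: assuming $F(\shh_P)\subseteq {}^{\overline P}\rD^{\leq 0}$, I would show that $F(M)\in {}^{\overline P}\rD^{\leq 0}$ for every $M\in {}^P\rD^{\leq 0}$.

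For such $M$, set $i_0:=\max\{i\mid {}^P\shh^i(M)\neq 0\}$ and $i_1:=\min\{i\mid {}^P\shh^i(M)\neq 0\}$, both finite because $P$ is bounded, and note $i_0\leq 0$. I would induct on the amplitude $a(M):=i_0-i_1\geq 0$. In the base case $a(M)=0$ the object $M$ is isomorphic to ${}^P\shh^{i_0}(M)[-i_0]$ with $-i_0\geq 0$; since ${}^P\shh^{i_0}(M)\in \shh_P$, we have $F({}^P\shh^{i_0}(M))\in {}^{\overline P}\rD^{\leq 0}$ by hypothesis, and iterating closure under the shift $[1]$ exactly $-i_0$ times places $F(M)$ in ${}^{\overline P}\rD^{\leq 0}$. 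For the inductive step, apply the triangulated functor $F$ to the standard truncation triangle
\[
\tau^{<i_0}_P M \longrightarrow M \longrightarrow {}^P\shh^{i_0}(M)[-i_0]\overset{+}\longrightarrow
\]
to obtain a distinguished triangle in $\overline{\mathcal C}$. The third term lies in ${}^{\overline P}\rD^{\leq 0}$ by the base-case argument, while $\tau^{<i_0}_P M$ belongs to ${}^P\rD^{\leq i_0-1}\subseteq {}^P\rD^{\leq 0}$ and has strictly smaller amplitude, so the inductive hypothesis gives $F(\tau^{<i_0}_P M)\in {}^{\overline P}\rD^{\leq 0}$. Closure of ${}^{\overline P}\rD^{\leq 0}$ under extensions then delivers $F(M)\in {}^{\overline P}\rD^{\leq 0}$.

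Statement (2) is obtained by the strictly dual argument: for $M\in {}^P\rD^{\geq 0}$ one has $i_1\geq 0$, and one truncates from below via $M\to \tau^{>i_1}_P M$ with remainder $ {}^P\shh^{i_1}(M)[-i_1]$, using that ${}^{\overline P}\rD^{\geq 0}$ is closed under $[-1]$ (and hence under $[-k]$ for $k\geq 0$, which accommodates $-i_1\leq 0$) together with extensions. Item (3) is an immediate conjunction of (1) and (2), once one notes that $\shh_{\overline P}={}^{\overline P}\rD^{\leq 0}\cap{}^{\overline P}\rD^{\geq 0}$.

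There is no substantive obstacle: the hypothesized closure properties of ${}^{\overline P}\rD^{\leq 0}$ and ${}^{\overline P}\rD^{\geq 0}$ are designed exactly to propagate membership across truncation triangles, and boundedness of $P$ guarantees that the induction terminates. The one point that requires a moment's care is the shift bookkeeping in the base case, where one must observe that closure under a single shift by $[1]$ (resp.\ $[-1]$) entails closure under all nonnegative iterates, which is what allows the heart hypothesis to cover the various homological degrees $-i_0\geq 0$ (resp.\ $-i_1\leq 0$) appearing in the dévissage.
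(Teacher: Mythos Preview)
Your proof is correct and follows essentially the same approach as the paper: d\'evissage via the truncation triangles of the bounded $t$-structure $P$, using closure of ${}^{\overline P}\rD^{\leq 0}$ under $[1]$ and extensions (and dually for (2)). The only cosmetic difference is that you peel off the top cohomology ${}^P\shh^{i_0}(M)$ and induct on the amplitude, whereas the paper peels off the bottom cohomology ${}^P\shh^{-k-1}(X)$ and inducts on $k$ with $X\in{}^P\rD^{\leq 0}\cap{}^P\rD^{\geq -k}$; the two inductions are interchangeable.
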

\begin{proof}
Let us recall that by definition a $t$-structure $P:=({}^P\rD^{\leq 0},{}^P\rD^{\geq 0})$ on $\mathcal C$
is bounded if for any $X\in \mathcal C$ there exist $m\leq n\in \Bbb Z$ such that 
$X\in {}^P\rD^{\leq n}\cap {}^P\rD^{\geq m}$ and as remarked by Bridgeland in \cite[Lemma 2.3]{Brid} these $t$-structures are completely determined by their hearts (via its Postnikov tower). 

The left to right 
 implication is clear since $\shh_P\subseteq {}^{P}\rD^{\leq 0}$ so
let us suppose that $F(\shh_P)\subseteq {}^{\overline{P}}\rD^{\leq 0}$ and let us prove that
$F({}^P\rD^{\leq 0})\subseteq  {}^{\overline{P}}\rD^{\leq 0}$.
Recall that 
for any $X\in {}^{{P}}\rD^{\leq 0}$ there exists a suitable $k\in \Bbb N$ such that
$X\in {}^{{P}}\rD^{\leq 0}\cap {}^P\rD^{\geq -k}$. Let us proceed  by induction on $k\in \Bbb N$. For $k=0$ we get $X\in \shh_P$ and thus $F(X)\in {}^{\overline{P}}\rD^{\leq 0}$ by hypothesis. Let us suppose by inductive hypothesis that the first statement  holds true for $k$ and let $X\in {}^{{P}}\rD^{\leq 0}\cap {}^P\rD^{\geq -k-1}$. By applying the functor $F$ to the distinguished triangle 
${}^PH^{-k-1}(X)[k+1]\to X\to {}^P\tau^{\geq -k}(X)\stackrel{+1}\to$ we obtain
$F({}^PH^{-k-1}(X))[k+1]\to F(X)\to F({}^P\tau^{\geq -k}(X))\stackrel{+1}\to$.
By hypothesis $F({}^PH^{-k-1}(X))[k+1]\in {}^{\overline{P}}\rD^{\leq 0}[k+1]\subseteq {}^{\overline{P}}\rD^{\leq 0}$ (thanks to the fact that $ {}^{\overline{P}}\rD^{\leq 0}$ is closed under
$[1]$)
and by inductive hypothesis 
$F({}^P\tau^{\geq -k}(X))\in {}^{\overline{P}}\rD^{\leq 0}
$. Thus $F(X)\in {}^{\overline{P}}\rD^{\leq 0}$ since $ {}^{\overline{P}}\rD^{\leq 0}$ is closed under extensions. The second statement follows similarly and the third is the consequence of the first and second ones.
\end{proof}

\begin{lemma}\label{Lsigma2}
Let $\shn$ be a coherent $\DXS$-module. Then, for each $k$, 
$$\codim {\mathrm{Char}}(\mathcal{E}\mathit{xt}^{k}_{\DXS}(\shn,\DXS))\geq k,$$ in particular $$\codim {\mathrm{Char}}(\shh^k_{\DXS}\bD\shn)\geq k+d_X.$$
\end{lemma}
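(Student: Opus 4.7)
The plan is to reduce the statement to a purely commutative-algebraic codimension estimate for $\sho_{T^*X\times S}$ by means of a good filtration argument, exactly as in the classical absolute situation. The key point is that the order filtration on $\DXS$ has graded ring $\gr\DXS\cong p_*\sho_{T^*X\times S}$, which is a regular (hence Cohen--Macaulay, hence Auslander-regular) sheaf of rings on $X\times S$; accordingly the $\DXS$-side Ext can be compared with its graded, which lives over a well-behaved commutative ring where the Auslander codimension inequality is available.

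First I would work locally on $X\times S$ and equip $\shn$ with a good filtration $F_\cbbullet\shn$ compatible with the order filtration on $\DXS$. Since $\shn$ is coherent, such a filtration exists locally, $\gr^F\shn$ is a coherent $\gr\DXS$-module, and by coherence of $\DXS$ I may further choose a finite filtered free resolution $\shl^\cbbullet\to\shn$ by $\DXS$-modules. Applying $\cHom_{\DXS}(\cdot,\DXS)$ and passing to the associated graded yields a filtered complex whose graded is
\[
\cHom_{\gr\DXS}(\gr\shl^\cbbullet,\gr\DXS),
\]
which computes $\mathcal{E}\mathit{xt}^k_{\gr\DXS}(\gr^F\shn,\gr\DXS)$. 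A standard spectral sequence (or filtered-complex) argument then shows that $\gr\mathcal{E}\mathit{xt}^k_{\DXS}(\shn,\DXS)$ is a subquotient of $\mathcal{E}\mathit{xt}^k_{\gr\DXS}(\gr^F\shn,\gr\DXS)$. In particular, in $T^*X\times S$,
\[
\Char\bigl(\mathcal{E}\mathit{xt}^k_{\DXS}(\shn,\DXS)\bigr)
\;=\;\supp\bigl(\gr\mathcal{E}\mathit{xt}^k_{\DXS}(\shn,\DXS)\bigr)
\;\subseteq\;\supp\bigl(\mathcal{E}\mathit{xt}^k_{\gr\DXS}(\gr^F\shn,\gr\DXS)\bigr).
\]

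Next I would invoke the classical codimension inequality: on the regular ring $\sho_{T^*X\times S}$ of Krull dimension $2d_X+d_S$, for any coherent module $\cF$ one has $\codim\supp\mathcal{E}\mathit{xt}^k_{\sho_{T^*X\times S}}(\cF,\sho_{T^*X\times S})\geq k$. Applying this to $\cF=\gr^F\shn$ and combining with the above inclusion of supports gives $\codim\Char\bigl(\mathcal{E}\mathit{xt}^k_{\DXS}(\shn,\DXS)\bigr)\geq k$, which is the first inequality. The only genuinely delicate point is verifying that the filtered comparison actually yields a subquotient relation on the graded level in the sheaf setting; since everything is done locally and the resolution is finite, this reduces to the same statement for filtered modules over filtered regular rings, which is standard.

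Finally, the ``in particular'' statement is immediate from the definition of $\bD$. Since $\Omega^{\otimes -1}_{X\times S/S}$ is an invertible $\sho_{X\times S}$-module, tensoring by it does not alter characteristic varieties, and the shift $[d_X]$ in the definition of $\bD$ yields
\[
\shh^k\bD\shn\;\cong\;\mathcal{E}\mathit{xt}^{k+d_X}_{\DXS}(\shn,\DXS)\otimes_{\sho_{X\times S}}\Omega^{\otimes -1}_{X\times S/S},
\]
so applying the first inequality with $k$ replaced by $k+d_X$ gives $\codim\Char(\shh^k\bD\shn)\geq k+d_X$.
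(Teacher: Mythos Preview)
Your argument is correct, but it takes a different route from the paper. The paper does not redo the filtered/graded comparison; instead it reduces to the known absolute result via faithful flatness of $\shd_{X\times S}$ over $\DXS$. Concretely, one tensors up to get $\mathcal{E}\mathit{xt}^{k}_{\shd_{X\times S}}(\shd_{X\times S}\otimes_{\DXS}\shn,\shd_{X\times S})\cong \mathcal{E}\mathit{xt}^{k}_{\DXS}(\shn,\DXS)\otimes_{\DXS}\shd_{X\times S}$, applies Kashiwara's codimension estimate for coherent $\shd_{X\times S}$-modules (\cite[Theorem~2.19(2)]{Ka2}) to obtain the bound in $T^*(X\times S)$, and then observes that the characteristic variety of the tensored module equals $\pi^{-1}\Char(\mathcal{E}\mathit{xt}^{k}_{\DXS}(\shn,\DXS))$ for the projection $\pi:T^*X\times T^*S\to T^*X\times S$, so the codimension descends unchanged. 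Your approach is more self-contained: you essentially reprove the absolute theorem directly in the relative setting by using that $\gr\DXS$ is still regular and invoking the Auslander-type inequality $\codim\supp\mathcal{E}\mathit{xt}^k_{\sho}(\cF,\sho)\geq k$ on $T^*X\times S$. The paper's proof is shorter because it outsources the hard work to a citation; yours has the virtue of making clear that nothing beyond the regularity of $\gr\DXS$ is needed, and avoids the slightly indirect passage through $T^*(X\times S)$.
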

\begin{proof}According to the faithfull flatness of $\shd_{X\times S}$ over $\DXS$ and to \cite[Theorem 2.19 (2)]{Ka2}, we have, for each $k$, $$\codim \Char(\mathcal{E}\mathit{xt}^{k}_{\shd_{X\times S}}(\shd_{X\times S}\otimes_{\DXS}\shn,\shd_{X\times S}))$$ $$=\codim \Char(\mathcal{E}\mathit{xt}^{k}_{\DXS}(\shn,\DXS)\otimes_{\DXS}\shd_{X\times S})\geq k$$
Since $$\Char(\mathcal{E}\mathit{xt}^{k}_{\DXS}(\shn,\DXS)\otimes_{\DXS}\shd_{X\times S})=\pi^{-1}\Char(\mathcal{E}\mathit{xt}^{k}_{\DXS}(\shn,\DXS))$$
where $\pi:T^*X\times T^*S\to T^*X\times S$ is the projection, we conclude that $$\codim \Char(\mathcal{E}\mathit{xt}^{k}_{\DXS}(\shn,\DXS))\geq k$$ as desired.
\end{proof}

\begin{lemma}\label{SPi}
For any 
holonomic
$\shd_{X\times S/S}$-module $\shm$ we have 
\[
\mathrm{Char}(\shm)=\bigcup_{i\in I} \Lambda_i\times T_i
\] 
for some closed $\C^*$-conic irreducible Lagrangian subsets $\Lambda_i$ of $ T^*X$  and some
closed analytic subsets $T_i$ of $S$, and, locally on $X$, the set $I$ is finite.
Moreover $p_X(\mathrm{Supp}(\shm))=\bigcup\limits_{i\in I}  T_i$, hence it is an analytic subset of $S$, and 
\[
\dim {\mathrm{Char}}(\shm)=\dim X+t \qquad \hbox{where }\quad  t=\dim p_X(\mathrm{Supp}(\shm))=
\sup_{i\in I} \dim T_i.
\]
\end{lemma}
\begin{proof}
Let $\shm$ be a holonomic $\shd_{X\times S/S}$-module,
%and
%let denote by $T:=p_X({\mathrm{Supp}}(\shm))$.
and let 
$\Lambda\subseteq T^*X$ be a Lagrangian analytic $\C^*$-conic (or conic, for short) closed subset such that ${\mathrm{Char}}(\shm)\subseteq \Lambda\times S$.
%Hence $\dim{\mathrm{Char}}(\shm)\leq \dim (\Lambda\times T)=\dim X+\dim T$.

Let $\Lambda=\bigcup\limits_{i\in I} \Lambda_i$, with $\Lambda_i$ closed conic irreducible Lagrangian in $T^*X$, be the (locally finite)
decomposition of $\Lambda$ in irreducible components.

Let us consider the family of the components $\Lambda_j$ such that $\Char{\shm}\cap (\Lambda_j\times S)\neq \emptyset$. For simplicity, let us denote this family by $\{\Lambda_1, \cdots, \Lambda_K\}$. By the assumption of irreducibility, for each irreducible component $W$ of $\Char \shm$ there must exist a $\Lambda_j$ such that $W\subset \Lambda_j\times S$.

Let $W$ be any irreducible component of $\Char \shm$ which is contained in $\Lambda_1\times S$. Then $W$ is conic involutive in the Poisson manifold $T^*X\times S$ and, for each $s\in S$, $W\cap p_X^{-1}(s)$ is contained in $\Lambda_1\times \{s\}$. According to \cite[Cor.1.1.14] {KMF}, $W\cap p_X^{-1}(s)$ is still involutive in $T^ *X\times \{s\}$. Since it is contained in $\Lambda_1\times\{s\}$, it must be a conic Lagrangian closed analytic set.  Since $\Lambda_1$ is conic Lagrangian closed analytic irreducible, we must have either $W\cap p_X^{-1}(s)=\emptyset$ or $W\cap p_X^{-1}(s)=\Lambda_1\times \{s\}$. In particular $W=\Lambda_1\times \tilde{T}_1$, for some closed subset $\tilde{T}_1$ of $S$. To see that $\tilde{T}_1$ is analytic (hence irreducible analytic) it suffices to fix a point $p\in \Lambda_1$, then $\{p\}\times \tilde{T}_1=q_X^{-1}(p)\cap W$ where $q$ denotes the projection $T^ *X\times S\to T^*X$. Hence $\{p\}\times \tilde{T}_1$ is analytic and so is $\tilde{T}_1$.

%According to \cite[Cor 5.4]{JPD}, the closure of $\Char \shm\setminus (\cup_{i\neq 1}\Lambda_i\times S)$ is closed analytic and is precisely the union of the irreducible components of $\Char \shm\cap(\Lambda\times S)$ which are contained  in $\Lambda_1\times S$. 
%\marginpar{changed}
By the preceding argument, the union of the family of irreducible components of $\Char\shm$ contained in $\Lambda_1\times S$ is equal to 
$\cup_{l\in L_1}(\Lambda_1\times \tilde{T}_{1,l})=\Lambda_1\times T_1$ for some finite family $(\tilde{T}_{i,l})_{l\in L}$ of closed irreducible subsets in $S$.
We can now apply this argument to each $\Lambda_i$ and the first part of the result follows.

Since $$\mathrm{Supp}(\shm)=\Char \shm\cap (T^* _XX\times S) $$ we deduce that
$p_X(\mathrm{Supp}(\shm))=\bigcup\limits_{i}T_i$ 
and hence $ t:=\dim p_X(\mathrm{Supp}(\shm))=
\sup\limits_{i\in I} \dim T_i$ hence
$\dim \mathrm{Char}(\shm)=\dim X+t$ which ends the proof.

\end{proof}

We have now the tools to obtain the description of $\Pi$ for arbitrary $d_S$:

\begin{theorem}\label{PrOp:PiSupp}

The $t$-structure $\Pi$ on $\rD_{\hol}^{\rb}(\shd_{X\times S/S})$
%coincides with the $t$-structure associated to the support datum
%${ _S\mathfrak{S}}$ restricted to $\rD^{\rb}_\hol(\shd_{X\times S/S})$:
can be described in the following way:

%\[
%\begin{matrix}
%(\ast)\, {}^{\Pi}\rD_{\hol}^{\leq 0}(\shd_{X\times S/S})= & \{ \shm
%\in \rD^\rb_{\hol}(\shd_{X\times S/S})\; | \; \forall k,\,
%\codim p_X({\mathrm{Supp}}( {}^{P}\shh^k(\shm)))\geq k\} \hfill\\
%\{\shm\in {}^{P}\rD^{\geq 0}_{\hol}(\shd_{X\times S/S})\; | \; 
%{}^P\shh^k_{[X\times W]}(\shm|_{X\times U})=0 \hbox{ for any closed} \hfill\\ 
%&\hfill \hbox{open subset $U$ of $S$ and for any analytic closed subset $W$ of $S$ and }k<\codim_S W  \}.\hfill  \\ 
%\end{matrix}
%\]
\[
\begin{matrix}
(\ast)\, {}^{\Pi}\rD_{\hol}^{\leq 0}(\shd_{X\times S/S})= & \{ \shm
\in \rD^\rb_{\hol}(\shd_{X\times S/S})\; | \; \forall k,\,
\codim p_X({\mathrm{Supp}}( {}^{P}\shh^k(\shm)))\geq k\} \hfill\\
(\ast\ast)\,{}^{\Pi}\rD_{\hol}^{\geq 0}(\shd_{X\times S/S})= & 
\{\shm\in {}^{P}\rD^{\geq 0}_{\hol}(\shd_{X\times S/S})\; | \; 
{}^P\shh^k_{[X\times W]}(\shm_{| X\times U})=0 \hbox{ for any } \hfill\\ 
\hfill \hbox{closed analytic subset } \;W & \hbox{of any  open subset $U\subseteq S$ and }k<\codim_U W  \}.\hfill  \\ 
\end{matrix}
\]

\end{theorem}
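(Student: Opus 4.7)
Write ${}^C\rD^{\leq 0}$ and ${}^C\rD^{\geq 0}$ for the two classes in the statement of the theorem. By Lemma~\ref{lemma:sigma}, they form a $t$-structure on $\rD^\rb(\DXS)$; in particular ${}^C\rD^{\leq 0}$ is stable under $[1]$ and extensions, and ${}^C\rD^{\geq 0}$ is stable under $[-1]$ and extensions. The plan is to prove $\shh_\Pi\subseteq {}^C\rD^{\leq 0}\cap {}^C\rD^{\geq 0}$, then apply the two halves of Lemma~\ref{Lem:red} to the identity functor to promote this to the inclusions ${}^\Pi\rD^{\leq 0}\subseteq {}^C\rD^{\leq 0}$ and ${}^\Pi\rD^{\geq 0}\subseteq {}^C\rD^{\geq 0}$, and finally to conclude equality by uniqueness of $t$-structure decompositions: for any $\shm\in\rD^\rb_\hol(\DXS)$ the $\Pi$-approximating triangle has outer terms in ${}^C\rD^{\leq 0}$ and ${}^C\rD^{\geq 1}$, hence it also realizes the (unique) $C$-approximating triangle of $\shm$, which forces the two $t$-structures on $\rD^\rb_\hol(\DXS)$ to coincide.

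Fix $\shm\in\shh_\Pi$; by definition $\shn:=\bD\shm\in\shh_P$ is a single holonomic $\DXS$-module, and biduality gives $\shm\simeq\bD\shn$. The inclusion $\shm\in {}^C\rD^{\leq 0}$ follows directly from the earlier technical lemmas: Lemma~\ref{Lsigma2} yields $\codim\Char({}^P\shh^k(\bD\shn))\geq k+n$, and since each ${}^P\shh^k(\bD\shn)$ is holonomic, Lemma~\ref{SPi} converts this into $\dim p_X(\supp\,{}^P\shh^k(\bD\shn))\leq \ell-k$, i.e.\ $\codim_S p_X(\supp\,{}^P\shh^k(\bD\shn))\geq k$. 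For $\shm\in {}^C\rD^{\geq 0}$, the first condition $\shm\in{}^P\rD^{\geq 0}$ is immediate from the inclusion ${}^\Pi\rD^{\geq 0}\subseteq{}^P\rD^{\geq 0}$ implicit in Lemma~\ref{Lhol1}; so it remains to establish the local-cohomology vanishing ${}^P\shh^k_{[X\times W]}(\shm)=0$ for every closed analytic $W\subseteq S$ of codimension $c:=\codim_S W$ and every $k<c$.

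For this last vanishing I would start from the commutation
\[
R\Gamma_{[X\times W]}(\bD\shn)\simeq R\cHom_{\DXS}\!\bigl(\shn,\,R\Gamma_{[X\times W]}(\DXS\otimes\Omega^{\otimes^{-1}}_{X\times S/S})\bigr)[n]
\]
of $R\Gamma_{[X\times W]}$ with $R\cHom$ in the second variable. The $\sho_{X\times S}$-flatness of $\DXS\otimes\Omega^{\otimes^{-1}}_{X\times S/S}$, combined with the Cohen--Macaulay vanishing for $\sho_{X\times S}$ and the projection formula, gives
\[
R\Gamma_{[X\times W]}(\DXS\otimes\Omega^{\otimes^{-1}}_{X\times S/S})\simeq (\DXS\otimes\Omega^{\otimes^{-1}}_{X\times S/S})\otimes^L_{\sho_{X\times S}}R\Gamma_{[X\times W]}(\sho_{X\times S})\in\rD^{\geq c}.
\]
Combined with the grade bound $\mathcal{E}\mathit{xt}^p_{\DXS}(\shn,\DXS)=0$ for $p<n$ (which is equivalent to $\bD\shn\in{}^P\rD^{\geq 0}$ via Lemma~\ref{Lhol1}), the hypercohomology spectral sequence for $R\cHom_{\DXS}(\shn,-)$ is concentrated in $p\geq n$ and $q\geq c$, and the $[n]$-shift then yields $R\Gamma_{[X\times W]}(\shm)\in\rD^{\geq c}$. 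The main technical obstacle is propagating the grade bound through the derived tensor with the non-$\sho_{X\times S}$-flat complex $R\Gamma_{[X\times W]}(\sho_{X\times S})$; my intended way around this is to pick an $\sho$-flat representative of $R\cHom_{\DXS}(\shn,\DXS\otimes\Omega^{\otimes^{-1}}_{X\times S/S})$ coming from a finite free $\DXS$-resolution $\shl^\bullet\to\shn$ and perform the double-complex degree-count directly on this representative.
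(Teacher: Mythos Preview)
Your overall scaffolding is clean and in fact more economical than the paper's: reducing to $\shh_\Pi\subseteq{}^C\rD^{\leq 0}\cap{}^C\rD^{\geq 0}$ via Lemma~\ref{Lem:red} and then invoking uniqueness of approximation triangles is a valid shortcut that spares you the converse inclusions the paper proves by hand (via $Li^*_s$ and an induction on amplitude). Your argument for $\shh_\Pi\subseteq{}^C\rD^{\leq 0}$ is exactly the paper's, using Lemmas~\ref{Lsigma2} and \ref{SPi}.

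The gap is in the local-cohomology vanishing. Your double-complex count on the free-resolution representative $\shl^{*\bullet}[n]$ of $\bD\shn$ only yields $R\Gamma_{[X\times W]}(\bD\shn)\in\rD^{\geq c-n}$: the terms of $\shl^{*\bullet}[n]$ sit in degrees $\geq -n$, and flatness of those terms against $R\Gamma_{[W]}(p_X^{-1}\sho_S)\in\rD^{\geq c}$ gives nothing better. The grade bound $\mathcal{E}\mathit{xt}^p_{\DXS}(\shn,\DXS)=0$ for $p<n$ does \emph{not} survive $\otimes^L$ with the non-flat $p_X^{-1}\sho_S$-modules $\shh^q_{[W]}(p_X^{-1}\sho_S)$, so your spectral-sequence claim ``concentrated in $p\geq n$'' is unjustified; this is precisely the obstacle you flag, and the proposed workaround does not close it.

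What is missing is Lemma~\ref{LH0}: the good truncation $\shf^\bullet=\tau^{\geq 0}(\shl^{*\bullet}[n])$ has degree-zero term $\Coker(d^{-1}_{\shl^{*\bullet}})$, and that cokernel is \emph{strict}, i.e.\ $p_X^{-1}\sho_S$-flat (this is nontrivial and is proved via the $Li^*_s$-criterion). Once you know all terms of $\shf^\bullet$ are flat and sit in degrees $\geq 0$, the paper's one-line computation
\[
R\Gamma_{[X\times W]}(\bD\shn)\;\simeq\;R\Gamma_{[W]}(p_X^{-1}\sho_S)\otimes_{p_X^{-1}\sho_S}\shf^\bullet\;\in\;{}^P\rD^{\geq c}
\]
goes through. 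With this correction your proof is complete.
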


\begin{proof}
Note that the statement is true in the absolute case
since we get ${}^{\Pi}\rD_{\hol}^{\leq 0}(\shd_{X})={}^P\rD_{\hol}^{\leq 0}(\shd_{X})$
(an  holonomic ${\mathcal D}_X$-module whose characteristic variety has codimension  grater than
$d_ X$ is necessarily zero).
%In the case  $X=\{pt\}$ the statement is true  since we recover the $t$-structure $\pi$ on 
%$\rD^\rb_{\coh}(\sho_S)$ (see Remark~\ref{Rem:Kash}).

\subsubsection*{Step 1} 
%\marginpar{j'ai simplifi\'e notations}
%Following the notation of Lemma~\ref{lemma:sigma}
 
Let us prove the equality $(\ast\ast)$.
%{\color{blue}\[
%\begin{matrix}{}^{\Pi}\rD_{\hol}^{\geq 0}(\shd_{X\times S/S}) = &
%\{\shm\in {}^{P}\rD^{\geq 0}_{\hol}(\shd_{X\times S/S})\; | \; 
%{}^P\shh^k_{[X\times W]}(\shm)=0 \hbox{ for any closed} \hfill\\ 
%& \hfill \hbox{analytic subset $W$ of $S$ and }k<\codim_S W  \}.
%\ 
%\end{matrix}
%\]
%}
 We start by proving the inclusion of the left hand side into the right one.
%${}^{\Pi}\rD_{\hol}^{\geq 0}(\shd_{X\times S/S})\subseteq
%\rD_{\hol}^{\rb}(\shd_{X\times S/S})\cap {}^{ _S\mathfrak{S}}\rD^{\geq 0}(\shd_{X\times S/S})$.

Let $W$ be a closed analytic subset of an open subset $U\subseteq S$ such that $ \codim_U W\geq k$.
Let us prove that
$R\Gamma_{[X\times W]}(\bD\shn_{| X\times U})\in {}^{P}\rD^{\geq k}(\shd_{X\times U/U})$
for any complex $\shn\in {}^{P}\rD_{\hol}^{\leq 0}(\shd_{X\times S/S})$. This will be a consequence of Lemma \ref{LH0}. Indeed, keeping the notation of the proof of this Lemma, we have
$$R\Gamma_{[X\times W]}(\bD\shn_{| X\times U})\cong
R\Gamma_{[W]}(p_X^{-1}\sho_U)\otimes_{p_X^{-1}\sho_U}\shf^{\scriptscriptstyle \bullet}_{| X\times U}
\in{}^{P}\rD^{\geq k}(\shd_{X\times U/U})$$
since $R\Gamma_{[W]}(p_X^{-1}\sho_U)\in\rD^{\geq k}(p_X^{-1}\sho_{U})$
and the terms of $\shf^{\scriptscriptstyle \bullet}_{| X\times U}$ are strict coherent $\shd_{X\times U/U}$-modules.

Let us now prove the inclusion of the right hand side in the left one, that is, let us prove that
given $\shm\in\rD_{\hol}^{\rb}(\shd_{X\times S/S})$ such that
${}^P\shh^k_{[X\times W]}(\shm_{| X\times U}))=0$  for any closed
analytic subset $W$ of an open subset $U\subseteq S$ and $k<\codim_S W $
we get $\shm\in{}^{\Pi}\rD_{\hol}^{\geq 0}(\shd_{X\times S/S})$.

We note that the statement is local.
 In view of Lemma~\ref{LMFCS} $(4)$
it suffices to check that, for each $s\in S$, $Li^*_s\shm\in \rD_{\hol}^{\geq 0}(\shd_{X})$.
We shall argue by induction on $d_S$. First suppose $d_S=1$. Let $s_0\in S$ and $s$ be a local coordinate on $S$ vanishing in $s_0$. 
%We denote by $R\Gamma_{[X\times S^*]}$ the functor of localization relatively to the hypersurface $X\times\{s_0\}$. 
By the same arguments of Lemma~\ref{Lem:red} we may assume that $\shm$ is concentrated in degree $0$. 
Hence $\shm$ is strict, since, if $s^P\shm=0$ for some natural $P$,  $\Gamma_{[X\times \{s_0\}]}(\shm)\neq 0$ contradicting the assumption on $\shm$; so, according to Proposition~\ref{Pholpi} $\shm\in {}^{\Pi}\rD_{\hol}^{\geq 0}(\shd_{X\times S/S})$.

%
%We have a distinguished triangle
%$$(A) \,R\Gamma_{[X\times\{s_0\}]}\shm\to\shm\to R\Gamma_{[X\times S^*]}\shm\overset{+}{\to}$$
%By assumption on $\shm$, $R\Gamma_{[X\times\{s_0\}]}\shm\in\rD^{\geq1}(X\times S)$.
%Hence $Li^ *_{s_0}R\Gamma_{[X\times\{s_0\}]}\shm\in \rD^{\geq 0}(X\times S)$. The satement follows in this case because $s$ is invertible on  $\shm(\ast X\times \{s_0\})\simeq R\Gamma_{[X\times S^*]}\shm$.

Let us now treat the general case. It will be a consequence of the following Lemma which is a variation of a formula proved in \cite{MFCS3}, page 153 (see also \cite{Ka0}):

\begin{lemma}\label{LS}
Let $X$ be an open subset in $\C^n$, let $S$ be an open set of $\C^d$ containing $0$, with coordinates $(s_1,\cdots,s_d)$. 
Let us denote by $S_j$ the submanifold of $S$ of equations $s_1=0,\cdots, s_j=0$, for $j=1,\cdots, d$ and, for any $f$ holomorphic on $S_j$, denote by 
$L_f ^ *:=p_X^{-1}(\sho_{S_j}/\sho_{S_j} f)\otimes_{p_X^{-1}(\sho_{S_j})}^L\,(\cdot)$ the corresponding derived functor. Then we have an isomorphism of functors on $\rD^\rb(\sho_{X\times S_j})$
$$(A)\,Li^*_{s_{j+1}} R\Gamma_{[X\times S_j]}(\cdot)=R\Gamma_{[X\times S_{j+1}]}Li^*_{s_{j+1}}(\cdot)$$
\end{lemma}

We consider the local situation where $s_0=0\in \C^d$. Following the notations of the preceding Lemma, let
$W= S_1$. Let $S^*:=S\setminus S_1$ and
we denote by $R\Gamma_{[X\times S^*]}(\cdot)$ the functor of localization relatively to the hypersurface $X\times S_1$. As usual we may assume that $\shm$ is concentrated in degree $0$. 
Since $S_1$ has equation $s_1=0$  we deduce, as in the case $d_S=1$, that $\shm$ has no $s_1$ torsion, since, if $s_1^P\shm=0$ for some natural $P$,  $\Gamma_{[X\times S_1]}(\shm)\neq 0$ contradicting the assumption that
$R\Gamma_{[X\times S_1]}(\shm)\in {}^{P}\rD^{\geq 1}(\shd_{X\times S/S})$.

Let consider the distinguished triangle
$$(B)\quad \, Li^ *_{s_1}R\Gamma_{[X\times S_1]}\shm\to Li^ *_{s_1}\shm\to Li^ *_{s_1}R\Gamma_{[X\times S^*]}\shm\overset{+}{\to}$$
%By assumption on $\shm$, $R\Gamma_{[X\times S_1]}\shm\in\rD^{\geq1}(X\times S)$.
we have $Li^ *_{s_1}R\Gamma_{[X\times S_1]}\shm\in {}^{P}\rD^{\geq 0}(\shd_{X\times S_1/S_1})$
and so $Li^ *_{s_1}\shm\in {}^{P}\rD^{\geq 0}(\shd_{X\times S_1/S_1})$ since
in this case  $s_1$ is invertible on  $R\Gamma_{[X\times S^*]}\shm$ which is an object in
${}^{P}\rD^{\geq 0}(\shd_{X\times S/S})$.

% we conclude by the assumption on $\shm$ that $Li^*_{s_1}\shm\in\rD_{\hol}^ {\geq 0}(\shd_{X\times S_1/S_1})$. 
 Moreover, given a closed analytic subset $W_1$ of $S_1$, we have, according to $(A)$
$$R\Gamma_{[X\times W_1]}(Li^*_{s_1}\shm)=Li^*_{s_1}R\Gamma_{[X\times S_1]}(\shm)\in\rD^{\geq codim_{S_1}W_1}(X\times S_1)$$ Hence $Li^*_{s_1}\shm$ belongs to ${}^{\Pi}\rD_{\hol}^{\geq 0}(\shd_{X\times S_1/S_1})$ and we can proceed recursively to conclude the statement.

\subsubsection*{Step 2} 

By Lemma~\ref{SPi} we know that for any 
$\shm\in\rD^\rb_{\hol}(\shd_{X\times S/S})$ we have
$$\dim {\mathrm{Char}}( {}^{P}\shh^k(\shm))=d_ X+\dim p_X({\mathrm{Supp}}( {}^{P}\shh^k(\shm)))$$ hence
$$\codim {\mathrm{Char}}( {}^{P}\shh^k(\shm))\geq k+d_X\quad
\Longleftrightarrow\quad
\codim p_X({\mathrm{Supp}}( {}^{P}\shh^k(\shm)))\geq k$$ so we are reduced to prove that

\[ \{ \shm
\in \rD^\rb_{\hol}(\shd_{X\times S/S})\; | \; 
\codim {\mathrm{Char}}( {}^{P}\shh^k(\shm))\geq k+d_X\}
=
{}^{\Pi}\rD_{\hol}^{\leq 0}(\shd_{X\times S/S}).
\]

First we prove the inclusion:
\[ \{ \shm
\in \rD^\rb_{\hol}(\shd_{X\times S/S})\; | \; 
\codim {\mathrm{Char}}( {}^{P}\shh^k(\shm))\geq k+d_X\}
\subseteq
{}^{\Pi}\rD_{\hol}^{\leq 0}(\shd_{X\times S/S}).
\] 
Let us argue by induction on $m$ such that
$\shm\in{}^P\rD^{\leq m}_{\hol}(\shd_{X\times S/S})$ and that
$\codim {\mathrm{Char}}( {}^{P}\shh^k(\shm))\geq k+d_X$.
For $m=0$ we have
by Lemma~\ref{Lhol1} that 
$ {}^{P}\rD_{\hol}^{\leq 0}(\shd_{X\times S/S})\subset  {}^{\Pi}\rD_{\hol}^{\leq 0}(\shd_{X\times S/S})$.
Let us suppose that any complex in $ {}^P\rD^{\leq m}_{\hol}(\shd_{X\times S/S})$ satisfying
$\codim {\mathrm{Char}}( {}^{P}\shh^k(\shm))\geq k+d_ X$ belongs to 
$ {}^{\Pi}\rD_{\hol}^{\leq 0}(\shd_{X\times S/S})$ and
let $\shm\in {}^P\rD^{\leq m+1}_{\hol}(\shd_{X\times S/S})$
satisfying
$\codim {\mathrm{Char}}( {}^{P}\shh^k(\shm))\geq k+d_ X$.
By inductive hypothesis we have that $ {}^{P}\tau^{\leq m}\shm\in  {}^{\Pi}\rD_{\hol}^{\leq 0}(\shd_{X\times S/S})$
and  the distinguished triangle
\[ {}^{P}\tau^{\leq m}\shm \to \shm\to {}^{P}\shh^{m+1}(\shm)[-m-1]\stackrel{+}\to \]
proves  that $\shm\in  {}^{\Pi}\rD_{\hol}^{\leq 0}(\shd_{X\times S/S})$
 if and only if
 ${}^{P}\shh^{m+1}(\shm)\in  {}^{\Pi}\rD_{\hol}^{\leq -m-1}(\shd_{X\times S/S})$.
 This last condition is satisfied in view of the assumption on $\shm$ according to \cite[Theorem 2.19 (1)]{Ka2} together with the faithfull flatness of 
${\mathcal D}_{X\times S}$
over $\DXS$, which shows that
 $\bD({}^{P}\shh^{m+1}(\shm))\in  {}^{P}\rD_{\hol}^{\geq m+1}(\shd_{X\times S/S})$. 
 
Let us now prove the inclusion
\[ {}^{\Pi}\rD_{\hol}^{\leq 0}(\shd_{X\times S/S})\subseteq
\{ \shm
\in \rD^\rb_{\hol}(\shd_{X\times S/S})\; | \; 
\codim {\mathrm{Char}}( {}^{P}\shh^k(\shm))\geq k+d _X\}.
\]

Recalling that ${}^{\Pi}\rD_{\hol}^{\leq 0}:=\bD({}^{P}\rD_{\hol}^{\geq 0}(\shd_{X\times S/S}))$
we can apply Lemma~\ref{Lem:red} with $F=\bD$  and so we need only to prove that
given $\shn$ a holonomic $\DXS$-module, $\bD(\shn)$ satisfies 
$$
\codim {\mathrm{Char}}( {}^{P}\shh^k(\bD(\shn)))\geq k+d _X$$
and this holds true by Lemma~\ref{Lsigma2}.
\end{proof}
%\begin{remark}
%In the course of the previous proof, following the notation of Lemma~\ref{lemma:sigma},
%we show that $\rD_{\hol}^{\rb}(\shd_{X\times S/S})\cap {}^{ _S\mathfrak{S}}\rD^{\geq 0}(\shd_{X\times S/S})
%= {}^{\Pi}\rD_{\hol}^{\geq 0}(\shd_{X\times S/S})$ while we only have an inclusion
%$$\rD_{\hol}^{\rb}(\shd_{X\times S/S})\cap {}^{ _S\mathfrak{S}}\rD^{\leq 0}(\shd_{X\times S/S})
%\subseteq {}^{\Pi}\rD_{\hol}^{\leq 0}(\shd_{X\times S/S}).$$
%\end{remark}

\begin{remark}\label{Rem:2.13}
We conclude by the previous Theorem~\ref{PrOp:PiSupp} that the $t$-structure
$\Pi$ is left $P$-compatible (cf. Remark~\ref{Rem:FMT}) and so, according to Lemma \ref{Lhol1} and  to
\cite[Theorem 4.3]{FMT},  it can be recovered from $P$ via an iterated right tilting
procedure of length $\ell$.
\end{remark}

\section{$t$-structures on $\rD^\rb_{\cc}(p_X^{-1}\sho_S)$}
In  $\rD^\rb_{\cc}(p_X^{-1}\sho_S)$ the natural dualizing complex is $p_X^!\sho_S=p_X^{-1}\sho_S[2d_X]$ 
and one defines the duality functor  (cf. \cite {MFCS1} for details) by setting 
\[\bD(F)=\rh_{p_X^{-1}\sho_S}(F, p_X^{-1}\sho_S)[2d_ X].\]
Hence 
the canonical morphism $F\to \bD\bD(F)$ is an isormorphism
for any $F\in \rD^\rb_{\cc}(p_X^{-1}\sho_S)$.

We are now concerned by the corresponding of Lemma \ref{SPi} in the framework of $\rD^\rb_{\cc}(p_X^{-1}\sho_S)$. %{\color{blue}{
In the case of $d_S=1$
%}} 
it can be deduced thanks to the functor $\RH$ which will be recalled later (cf. \ref{subsec:relsubanalytic}). Let us be more precise: for a complex $F$ of sheaves on $X\times S$, let $SS \,F$ denote its microsupport (cf. \cite{KS1} for a detailed introduction to this notion).
Given $F\in\rD^\rb_{\C-c}(p^{-1}\sho_S)$, as proved in \cite[Th. \,3]{MFCS2}, we have a functorial isomorphism $F\simeq \pSol\RH(F)$ where $\RH(F)$ is a complex with (regular) holonomic cohomology, hence $\Char(\RH(F))=SS\,F$. Therefore we conclude:

\begin{corollary}\label{Cc}
%{\color{blue}{
Let us assume that $d_S=1$
%}}. 
Let $F\in\rD^\rb_{\C-c}(p^{-1}\sho_S)$. Let $\Lambda$ be a Lagrangian closed analytic $\C^*$-conic subset of $T^*X$ such that $SS\, F$ is contained in $\Lambda\times T^*S$. Then each closed irreducible component of $SS\, F\cap T^*S\times S$ is of the form $\Lambda_j\times T$ where $T$ is an analytic closed irreducible subset of $S$ and $\Lambda_j$ is a closed irreducible component of $\Lambda$. In particular, $p_X(\mathrm{Supp} \,F)$ is an analytic subset of $S$.
\end{corollary}

\begin{definition}\label{Def2.1}\cite[2.7]{MFCS1}
The {\emph{perverse}} $t$-structure $p$ on
the triangulated category $\rD^\rb_{\cc}(p_X^{-1}\sho_S)$ is given by
\[
\begin{matrix}
{}^{p}\rD^{\leq 0}_{\cc}(p_X^{-1}\sho_S)= & \{ F
\in \rD^\rb_{\cc}(p_X^{-1}\sho_S)\; | \;  \forall \alpha,
i_\alpha^{-1}F\in {\rD}_\cc^{\leq -d_{ X_{\alpha}}}(p_{X_{\alpha}}^{-1}\sho_S),
\hbox{ for some} \hfill\\
& \hfill \hbox{ adapted $\mu$-stratification } (X_\alpha)
\} \hfill\\
{}^{p}\rD^{\geq 0}_{\cc}(p_X^{-1}\sho_S)= & \{ F
\in \rD^\rb_{\cc}(p^{-1}_X\sho_S)\; | \; 
 \forall \alpha, i_\alpha^{!}F\in {\rD}_\cc^{\geq -d_ {X_{\alpha}}}(p_{X_{\alpha}}^{-1}\sho_S),
\hbox{ for some}\hfill \\
& \hfill \hbox{ adapted $\mu$-stratification } (X_\alpha)
\} \hfill\\
\end{matrix}
\]
or equivalently 
\[
\begin{matrix}
{}^{p}\rD^{\leq 0}_{\cc}(p_X^{-1}\sho_S)= & \{ F
\in \rD^\rb_{\cc}(p_X^{-1}\sho_S)\; | \; \forall \alpha,
i_x^{-1}F\in {\rD}_\coh^{\leq -d _{X_{\alpha}}}(\sho_S),
\hbox{ for any $x\in X_\alpha$}\\ &\hfill \hbox{ and for some
 adapted $\mu$-stratification } (X_\alpha)
\} \hfill\\
{}^{p}\rD^{\geq 0}_{\cc}(p_X^{-1}\sho_S)= & \{ F
\in \rD^\rb_{\cc}(p_X^{-1}\sho_S)\; | \; 
\forall \alpha, i_x^{!}F\in {\rD}_\coh^{\geq d _{X_{\alpha}}}(\sho_S), 
\hbox{ for any $x\in X_\alpha$}\hfill \\ & \hfill \hbox{ and for some
adapted $\mu$-stratification } (X_\alpha)\}. \hfill\\
\end{matrix}
\]

(See \cite[Definition~8.3.19]{KS1} for the definition of adapted $\mu$-stratification.)

Hence its dual $\pi$ with respect to the functor $\bD$ is 
\[
\begin{matrix}
{}^{\pi}\rD^{\leq 0}_{\cc}(p_X^{-1}\sho_S)= & \{ \shm
\in \rD^{\rb}_{\cc}(p_X^{-1}\sho_S)\; | \; 
\bD \shm\in {}^{p}\rD^{\geq 0}_{\cc}(p_X^{-1}\sho_S)  \} \hfill\\
{}^{\pi}\rD^{\geq 0}_{\cc}(p_X^{-1}\sho_S)= & 
\{\shm\in \rD^{\rb}_{\cc}(p_X^{-1}\sho_S)\; | \; 
\bD \shm\in {}^{p}\rD^{\leq 0}_{\cc}(p_X^{-1}\sho_S)  \}. \\
\end{matrix}
\]
\end{definition}
\notation\label{notation}
We shall denote by $\perv(p_X^{-1}\sho_S)$ the heart of the $t$-structure $p$.

We have the following statements:
\begin{enumerate}
\item{If $S=\{pt\}$ then $p$ equals the middle-perversity $t$-structure (cf.\cite[4.11]{Ka2}).}

\item{If $X=\{pt\}$ then $p$ is, as above, the standard $t$-structure in $\rD^\rb_{\coh}(\sho_S)$ and $\pi$ is the dual $t$-structure in $\rD^\rb_{\coh}(\sho_S)$ described by Kashiwara in \cite{Ka4}
(cf. Remark~\ref{Rem:Kash}.)}
\end{enumerate}
Therefore the problem of expliciting $\pi$ only matters for $d _S\geq 1$ and $d _X\geq 1$.

\begin{lemma}\label{LMFCScc}
Let us consider the functors 
$L i^*_s:\rD_{\cc}^{\rb}(p_X^{-1}\sho_S)\to \rD_{\cc}^{\rb}(\Bbb C_X)$
with $s$ varying in $S$. The following holds true:
\begin{enumerate}
\item the complex $F\in\rD_{\cc}^{\rb}(p_X^{-1}\sho_S)$ is isomorphic to $0$ if and only if
$L i^*_sF=0$ for any $s$ in $S$;
\item $L i^*_sF\in \rD_{\cc}^{\leq k}(\Bbb C_X)$ for each $s\in S$  if and only if 
$F\in \rD_{\cc}^{\leq k}(p_X^{-1}\sho_S)$;
\item if $L i^*_sF\in \rD_{\cc}^{\geq k}(\Bbb C_X)$ for each $s\in S$ then
$F\in \rD_{\cc}^{\geq k}(p_X^{-1}\sho_S)$.
\end{enumerate}
\end{lemma}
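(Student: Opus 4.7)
The plan is to adapt, essentially verbatim, the proof strategy of Lemma~\ref{LMFCS} to the $\CC$-constructible setting. The crucial input is that $\CC$-constructibility of $F$ over $p_X^{-1}\sho_S$ guarantees that at every point $(x,s)\in X\times S$ the stalks $\shh^i(F)_{(x,s)}$ are finitely generated $\sho_{S,s}$-modules, so that \emph{derived Nakayama} applies to the bounded complex $F_{(x,s)}$: over the regular local ring $(\sho_{S,s},m_s,\kappa(s))$, if $F_{(x,s)}\otimes^{L}_{\sho_{S,s}}\kappa(s)=0$ then $F_{(x,s)}=0$, and more generally the top non-vanishing cohomology of $F_{(x,s)}$ is detected by the $\kappa(s)$-tensor because no higher $Tor$ contaminates the top.

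For (1), I would argue pointwise: the vanishing of $Li^*_s F$ for every $s$ translates into $F_{(x,s)}\otimes^L_{\sho_{S,s}}\kappa(s)=0$ for every $(x,s)$, and derived Nakayama gives $F_{(x,s)}=0$ everywhere, whence $F\simeq 0$. For (2), the backward implication is the right $t$-exactness of $Li^*_s$ viewed as a left derived tensor product. For the forward implication, if $F$ has its top cohomology in degree $N>k$, then $\shh^N(Li^*_s F)_x\cong i^*_s\shh^N(F)_{(x,s)}$ (no $Tor$ contribution from above since $N$ is the top), and ordinary Nakayama at a stalk where $\shh^N(F)_{(x,s)}\neq 0$ produces a non-vanishing cohomology class in a forbidden degree, contradicting $Li^*_s F\in\rD^{\leq k}_{\cc}(\CC_X)$.

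Part (3) is the most delicate and I would prove it by induction on the cohomological amplitude of $F$. In the base case $F\simeq G[-M]$ concentrated in a single degree, the top-cohomology Nakayama argument of (2) again applies: $\shh^M(Li^*_s F)\cong i^*_s G$, so $Li^*_s F\in\rD^{\geq k}_{\cc}(\CC_X)$ for every $s$ forces $M\geq k$. For the inductive step I would peel off the bottom cohomology via the distinguished triangle $\shh^M(F)[-M]\to F\to \tau^{>M}F\xrightarrow{+1}$ and analyse the induced long exact sequence. The main obstacle is that for $d_S\geq 2$ the derived tensor $Li^*_s\tau^{>M}F$ can spread downward to cohomological degree $M+1-d_S$, so the vanishing of $\shh^i(Li^*_s F)$ for $i<k$ is not immediately transported onto $Li^*_s\tau^{>M}F$. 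I would handle this with the hypercohomology spectral sequence
\[
E_2^{p,q}=Tor_{-p}^{\sho_{S,s}}\!\bigl(\shh^q(F)_{(x,s)},\kappa(s)\bigr)\Longrightarrow \shh^{p+q}(Li^*_s F)_x,
\]
exploiting that the edge term $E_2^{0,M}=i^*_s\shh^M(F)$ admits no outgoing differentials (targets have $q<M$ and hence vanish), so its survival to $E_\infty$ is obstructed only by incoming differentials from higher-$Tor$ contributions of strictly higher cohomologies of $F$; these are controlled by a nested induction on the number of non-zero cohomologies above $M$, which together with the base case and ordinary Nakayama forces $\shh^M(F)=0$ whenever $M<k$.
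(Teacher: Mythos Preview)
Your arguments for (1) and (2) are correct and carry the standard derived-Nakayama flavour; this is in the spirit of what the paper invokes (the paper itself simply refers to \cite[Proposition~2.2]{MFCS1} for all three parts, so you are in fact supplying more detail than the text does).

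There is however a genuine gap in your treatment of (3). The spectral sequence tells you only that $E_\infty^{0,M}$ is a \emph{quotient} of $E_2^{0,M}=i^*_s\shh^M(F)_{(x,s)}$ by the images of the incoming differentials $d_r\colon E_r^{-r,M+r-1}\to E_r^{0,M}$, and that this quotient vanishes because it sits inside $\shh^M(Li^*_sF)_x=0$. This does \emph{not} force $i^*_s\shh^M(F)_{(x,s)}=0$: the higher $Tor$'s of the cohomologies $\shh^{M+r-1}(F)$ may well map onto it. Your proposed ``nested induction on the number of non-zero cohomologies above $M$'' does not close this, because peeling off $\shh^M(F)[-M]$ gives no control on $Li^*_s\tau^{>M}F$ in degrees $<k$ (the connecting maps feed $Tor_j(\shh^M(F),\kappa(s))$ back into those degrees), so the inductive hypothesis cannot be applied to $\tau^{>M}F$.

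The clean fix is to bypass the spectral sequence and exploit directly that $\sho_{S,s}$ is a regular local ring. The stalk $F_{(x,s)}$ lies in $\rD^\rb_{\mathrm{fg}}(\sho_{S,s})$, hence is perfect and admits a \emph{minimal} bounded free model $P^\bullet$ (all differentials have entries in $m_s$). Then $(Li^*_sF)_x\simeq P^\bullet\otimes_{\sho_{S,s}}\kappa(s)$ has zero differentials, so $\shh^i(Li^*_sF)_x\cong\kappa(s)^{\mathrm{rk}\,P^i}$. The hypothesis $Li^*_sF\in\rD^{\geq k}_{\cc}(\CC_X)$ now gives $P^i=0$ for $i<k$, whence $F_{(x,s)}\in\rD^{\geq k}(\sho_{S,s})$ for every $(x,s)$, and therefore $F\in\rD^{\geq k}_{\cc}(p_X^{-1}\sho_S)$. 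This replaces your induction-plus-spectral-sequence step by a single stalkwise computation.
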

\begin{proof}
$(1)$ is proved in \cite[Proposition 2.2]{MFCS1}.
The other implications can also be deduced by the proof of Proposition 2.2 in \cite{MFCS1}.
\end{proof}

Statement $(2)$ of the previous Lemma affirms that a complex $F$ belongs to
the aisle of the natural $t$-structure on $\rD_{\cc}^{\rb}(p_X^{-1}\sho_S)$ if and only if 
any $L i^*_sF$ belongs to
the aisle of the natural $t$-structure on $\rD_{\cc}^{\rb}(\Bbb C_X)$. 
This result 
admits the following counterpart  for the perverse $t$-structure
thus obtaining an analog of Lemma~\ref{LMFCS}. 

\begin{lemma}\label{LMFCScp}
The following statements hold true:
\begin{enumerate}
\item $L i^*_sF\in {}^{p}\rD_{\cc}^{\leq k}(\Bbb C_X)$ for each $s\in S$  if and only if 
$F\in {}^{p}\rD_{\cc}^{\leq k}(p_X^{-1}\sho_S)$;
\item if $L i^*_sF\in {}^{p}\rD_{\cc}^{\geq k}(\Bbb C_X)$ for each $s\in S$ then
$F\in {}^{p}\rD_{\cc}^{\geq k}(p_X^{-1}\sho_S)$;
\item
$L i^*_s F\in {}^{p}\rD_{\cc}^{\geq k}(\Bbb C_X)$ for each $s\in S$ if and only if 
$F\in {}^{\pi}\rD_{\cc}^{\geq k}(p_X^{-1}\sho_S)$;
\item if $F\in {}^{p}\rD_{\cc}^{\geq 0}(p_X^{-1}\sho_S)$ then
$L i^*_sF\in {}^{p}\rD_{\cc}^{\geq -\ell}(p_X^{-1}\sho_S)$ for each $s\in S$.
\end{enumerate}
\end{lemma}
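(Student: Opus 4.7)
The plan is to reduce each of the four statements to Lemma~\ref{LMFCScc}, in its absolute form ($X=\{pt\}$), by commuting $Li_s^*$ past the stalk/costalk functors $i_x^{-1}$, $i_x^!$ and past the duality $\bD$.

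As a preliminary step, I would first establish three commutation isomorphisms, valid for $x\in X$ and $s\in S$:
\[
i_x^{-1}\circ Li_s^* \simeq Li_s^*\circ i_x^{-1},\qquad
i_x^!\circ Li_s^* \simeq Li_s^*\circ i_x^!,\qquad
Li_s^*\circ \bD \simeq \bD\circ Li_s^*,
\]
where on the right of the first two, $Li_s^*$ is interpreted as $\C\overset{L}{\otimes}_{\sho_{S,s}}(\cdot)_s$ acting on $\rD^\rb_\coh(\sho_S)$. The first two are essentially formal: $Li_s^*$ modifies only the $\sho_S$-module structure, while $i_x^{-1}$ and $i_x^!$ are sheaf-theoretic operations in the $X$-direction, so they operate in ``independent'' directions. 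The third reflects that the relative dualizing complex $p_X^{-1}\sho_S[2d_X]$ specializes under $Li_s^*$ to the absolute dualizing complex $\C_X[2d_X]$ on $X$.

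For (1), I would fix an adapted $\mu$-stratification $(X_\alpha)$ making $F$ constructible; it remains adapted for every $Li_s^*F$. Using the first commutation, the stratum-wise condition ``$Li_s^*F\in{}^p\rD^{\leq k}_\cc(\C_X)$ for every $s$'' becomes ``$Li_s^*(i_x^{-1}F)\in\rD^{\leq k-d_{X_\alpha}}(\C)$ for every $x\in X_\alpha$ and every $s$'', and Lemma~\ref{LMFCScc}(2), applied with $X$ a point to the object $i_x^{-1}F\in\rD^\rb_\coh(\sho_S)$, rewrites this as $i_x^{-1}F\in\rD^{\leq k-d_{X_\alpha}}_\coh(\sho_S)$, i.e.\ $F\in{}^p\rD^{\leq k}_\cc(p_X^{-1}\sho_S)$. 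Since both directions of Lemma~\ref{LMFCScc}(2) are available, this proves (1) as an ``if and only if''. The same reasoning with $i_x^!$ and Lemma~\ref{LMFCScc}(3) in place of (2) proves (2); only one implication is now available, matching the one-way nature of the statement.

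Statement (3) would follow from (1) combined with the duality commutation and the self-duality of the middle perverse $t$-structure on $\rD^\rb_\cc(\C_X)$ in the absolute case (cf.\ Remark~\ref{Rem:Kash}): $F\in{}^\pi\rD^{\geq k}_\cc$ iff $\bD F\in{}^p\rD^{\leq -k}_\cc$ iff, by (1), $Li_s^*(\bD F)\in{}^p\rD^{\leq -k}(\C_X)$ for every $s$ iff, by the third commutation, $\bD(Li_s^*F)\in{}^p\rD^{\leq -k}(\C_X)$ iff $Li_s^*F\in{}^p\rD^{\geq k}(\C_X)$. For (4), I would use that $\sho_S/\mathfrak m_s$ has Tor-dimension at most $\ell$ over $\sho_S$, so $Li_s^*$ shifts lower cohomological bounds on $\rD^\rb_\coh(\sho_S)$ by at most $\ell$: from $F\in{}^p\rD^{\geq 0}_\cc$ one gets $i_x^!F\in\rD^{\geq d_{X_\alpha}}_\coh(\sho_S)$ for $x\in X_\alpha$, hence $i_x^!(Li_s^*F)=Li_s^*(i_x^!F)\in\rD^{\geq d_{X_\alpha}-\ell}(\C)$, which is the perversity condition $Li_s^*F\in{}^p\rD^{\geq -\ell}(\C_X)$.

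The hard part will be establishing the three commutation identities, especially the one with $\bD$, which requires a careful comparison of the relative and absolute dualizing complexes and their compatibility with specialization along a closed point of $S$. Once these are in hand, the remaining arguments reduce to a systematic application of Lemma~\ref{LMFCScc} in the absolute case, together with Kashiwara's self-duality for the absolute middle perverse $t$-structure.
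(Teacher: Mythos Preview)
Your proposal is correct and follows essentially the same strategy as the paper: reduce each item to Lemma~\ref{LMFCScc} via commutation of $Li_s^*$ with the restriction/corestriction functors and with duality, then invoke the absolute self-duality of the middle perverse $t$-structure for~(3) and the Tor-amplitude bound for~(4). The only cosmetic difference is that the paper works with the stratum-wise functors $i_\alpha^{-1}$, $i_\alpha^!$ (and $R\Gamma_{X_\alpha\times S}$ for~(4)) rather than the pointwise $i_x^{-1}$, $i_x^!$ you use, but both formulations appear in Definition~\ref{Def2.1} and the reductions are interchangeable.
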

\begin{proof}
(1) Recall that
$F\in \rD_{\cc}^{\rb}(p_X^{-1}\sho_S)$
 belongs to $^{p}\rD_{\cc}^{\leq k}(p_X^{-1}\sho_S)$ if for some adapted $\mu$-stratification $(X_{\alpha})_{\alpha\in A}$ we have
$$\forall \alpha, \,i_\alpha^{-1}F\in {\rD}_\cc^{\leq k-d _{X_{\alpha}}}(p_{X_{\alpha}}^{-1}\sho_S)$$
 or equivalently, by Lemma~\ref{LMFCScc} (2),
$$\forall \alpha, L i^*_si_\alpha^{-1}F\cong i_\alpha^{-1}Li^*_sF\in \rD_{\cc}^{\leq k-d_ {X_{\alpha}}}(\Bbb C_X) 
\quad \forall s\in S$$ which is equivalent to
$$L i^*_sF\in {}^{p}\rD_{\cc}^{\leq k}(\Bbb C_X).$$ 

(2) If $L i^*_sF\in {}^{p}\rD_{\cc}^{\geq k}(\Bbb C_X)$ for each $s\in S$
we get:
\[
\forall \alpha, L i^*_si_\alpha^{!}F\cong i_\alpha^{!}L i^*_sF\in {\rD}_\cc^{\geq k-d_{ X_{\alpha}}}(\Bbb C_{X_\alpha})
\hbox{ for some adapted $\mu$-stratification } (X_\alpha)
\]
and so by $(3)$ of Lemma~\ref{LMFCScc} we obtain $F\in {}^{p}\rD_{\cc}^{\geq k}(p_X^{-1}\sho_S)$.

$(3)$ can be deduced by duality from $(1)$ 
since
$Li^*_s\bD F\cong \bD Li^*_s F$ for any 
$F\in \rD_{\cc}^{\rb}(p_X^{-1}\sho_S)$ we have:

\begin{eqnarray*}
&F\in{}^{\pi}\rD_{\cc}^{\geq 0}(p_X^{-1}\sho_S) \Longleftrightarrow 
\bD F\in {}^{p}\rD_{\cc}^{\leq 0}(p_X^{-1}\sho_S) \\
&\stackrel{\hbox{by (1)}}\Longleftrightarrow 
F\in \rD_{\cc}^{\rb}(p_X^{-1}\sho_S) \; \hbox{ and }
\forall s\in S, \; Li^*_s\bD F\cong \bD Li^*_s F\in\rD_{\cc}^{\leq 0}(\Bbb C_X)\\
&\Longleftrightarrow 
F\in \rD_{\cc}^{\rb}(p_X^{-1}\sho_S) \; \hbox{ and }
\forall s\in S, Li^{*}_sF\in \rD_{\cc}^{\geq 0}(\Bbb C_X)\\
\end{eqnarray*}

where the last equivalence holds true since in the absolute case the functor $\bD$
 on $\rD^{\rb}_{\cc}(\Bbb C_X)$ is $t$-exact with respect to the
perverse $t$-structure. 

Let us prove $(4)$: we have 
\begin{eqnarray*}
&F\in {}^{p}\rD_{\cc}^{\geq 0}(p_X^{-1}\sho_S)  \Longleftrightarrow\hfill \\
&R\Gamma_{X_{\alpha}\times S}(F)\in \rD^{\geq -d_{X_{\alpha}}}(p_X^{-1}\sho_S )\; 
\forall \alpha,
\hbox{ for some adapted $\mu$-stratification }(X_\alpha)\Rightarrow \hfill \\
&R\Gamma_{X_{\alpha}}(Li^*_s F)\cong Li^*_s R\Gamma_{X_{\alpha}\times S}(F)\in 
\rD^{\geq -d_{ X_{\alpha}}-\ell}(X) 
\; \forall s\in S,
\forall \alpha, \hfill \\
&\hbox{ for some adapted $\mu$-stratification } (X_\alpha)\hfill \\
&\Longleftrightarrow
Li^*_s F\in {}^{p}\rD_{\cc}^{\geq -\ell}(X). \hfill \\
\end{eqnarray*}
\end{proof}

\begin{lemma}\label{Lcc1}
We have the double inclusion 
$$^{\pi}\rD_{\cc}^{\leq-\ell}(p_X^{-1}\sho_S)\subseteq {}^{p}\rD^{\leq 0}_{\cc}(p_X^{-1}\sho_S)\subseteq {}^{\pi}\rD^{\leq 0}_{\cc}(p_X^{-1}\sho_S)$$
hence,  given a perverse $p_X^{-1}\sho_S$-module $F$, its 
dual satisfies \[\bD F\in {\;}^p\rD^{[0,\ell]}_{\cc}(p_X^{-1}\sho_S).\]
\end{lemma}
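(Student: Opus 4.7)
The plan is to mirror the strategy of Lemma~\ref{Lhol1} in the $\C$-constructible setting, replacing Lemma~\ref{LMFCS} by its perverse counterpart Lemma~\ref{LMFCScp}. The crucial external input is that in the absolute case ($S=\{pt\}$), the duality functor $\bD$ on $\rD^\rb_{\cc}(\C_X)$ is $t$-exact for the perverse $t$-structure, together with the compatibility $Li^*_s\bD\cong \bD Li^*_s$ which already appears in the proof of Lemma~\ref{LMFCScp}(3).

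For the right inclusion ${}^{p}\rD^{\leq 0}_{\cc}(p_X^{-1}\sho_S)\subseteq {}^{\pi}\rD^{\leq 0}_{\cc}(p_X^{-1}\sho_S)$, I take $F$ in the source and apply Lemma~\ref{LMFCScp}(1) to obtain $Li^*_s F\in {}^{p}\rD^{\leq 0}_{\cc}(\C_X)$ for every $s\in S$. Absolute perverse $t$-exactness of $\bD$ then gives $Li^*_s\bD F\cong \bD Li^*_s F\in {}^{p}\rD^{\geq 0}_{\cc}(\C_X)$ for each $s$, and Lemma~\ref{LMFCScp}(2) yields $\bD F\in {}^{p}\rD^{\geq 0}_{\cc}(p_X^{-1}\sho_S)$, which is the definition of $F\in {}^{\pi}\rD^{\leq 0}_{\cc}(p_X^{-1}\sho_S)$.

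For the left inclusion ${}^{\pi}\rD^{\leq -\ell}_{\cc}(p_X^{-1}\sho_S)\subseteq {}^{p}\rD^{\leq 0}_{\cc}(p_X^{-1}\sho_S)$, I unfold $F\in {}^{\pi}\rD^{\leq -\ell}_{\cc}(p_X^{-1}\sho_S)$ as $\bD F\in {}^{p}\rD^{\geq \ell}_{\cc}(p_X^{-1}\sho_S)$. A shift of Lemma~\ref{LMFCScp}(4) produces $Li^*_s\bD F\in {}^{p}\rD^{\geq \ell-\ell}_{\cc}(\C_X)={}^{p}\rD^{\geq 0}_{\cc}(\C_X)$ for each $s\in S$; absolute perverse $t$-exactness of $\bD$ converts this to $Li^*_s F\in {}^{p}\rD^{\leq 0}_{\cc}(\C_X)$, and a second application of Lemma~\ref{LMFCScp}(1) concludes $F\in {}^{p}\rD^{\leq 0}_{\cc}(p_X^{-1}\sho_S)$.

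For the consequence, let $F$ be perverse. The right inclusion gives $F\in {}^{\pi}\rD^{\leq 0}_{\cc}(p_X^{-1}\sho_S)$ and hence $\bD F\in {}^{p}\rD^{\geq 0}_{\cc}(p_X^{-1}\sho_S)$; dually, taking right orthogonals in the left inclusion produces ${}^{p}\rD^{\geq 1}_{\cc}\subseteq {}^{\pi}\rD^{\geq -\ell+1}_{\cc}$, equivalently ${}^{p}\rD^{\geq 0}_{\cc}\subseteq {}^{\pi}\rD^{\geq -\ell}_{\cc}$, so $F\in {}^{\pi}\rD^{\geq -\ell}_{\cc}$ and thus $\bD F\in {}^{p}\rD^{\leq \ell}_{\cc}(p_X^{-1}\sho_S)$, which combined gives $\bD F\in {}^{p}\rD^{[0,\ell]}_{\cc}(p_X^{-1}\sho_S)$. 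The main obstacle is really the content already packaged in Lemma~\ref{LMFCScp}(4): the loss of $\ell$ cohomological degrees under $Li^*_s$ quantifies the failure of $p_X^{-1}\sho_S$-flatness, and once that fiberwise bound is available the rest is a direct fiber-by-fiber translation, parallel to Lemma~\ref{Lhol1}.
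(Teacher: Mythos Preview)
Your proof is correct and follows essentially the same route as the paper's: both inclusions are obtained fiberwise via Lemma~\ref{LMFCScp} (parts (1), (2) for the right inclusion, part (4) and then (1) for the left one), using the commutation $Li^*_s\bD\cong\bD Li^*_s$ and the perverse $t$-exactness of $\bD$ in the absolute case. Your derivation of the consequence by passing to right orthogonals is a clean way to make explicit what the paper leaves as the ``hence'' in the statement.
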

\begin{proof}
If $F\in {}^{p}\rD_{\cc}^{\leq 0}((p_X^{-1}\sho_S)$
by (1) of Lemma~\ref{LMFCScp} we get
for any $s\in S$, $Li^*_sF\in {}^{p}\rD_{\cc}^{\leq 0}(\Bbb C_X)$ and hence 
$Li^*_s\bD F\cong \bD Li^*_s F\in {}^{p}\rD_{\cc}^{\geq 0}(\Bbb C_X)$. Thus, according to (2) of 
Lemma~\ref{LMFCScp}, $\bD F\in  {}^{p}\rD_{\cc}^{\geq 0}((p_X^{-1}\sho_S)$ and so 
$F\in {}^{\pi}\rD_{\cc}^{\leq 0}((p_X^{-1}\sho_S).$

According to the definitions,
Lemma \ref{LMFCScp} and by the $t$-exactness of the functor $\bD$ for the perverse $t$-structure  
in the absolute case, we have: 
\begin{eqnarray*}
& F\in {}^{\pi}\rD_{\cc}^{\leq-\ell}((p_X^{-1}\sho_S)\Longleftrightarrow \bD F\in {}^{p}\rD_{\cc}^{\geq \ell}(p_X^{-1}\sho_S)\\
&\Rightarrow \forall s\in S, Li^*_s\bD F\cong \bD Li^*_s F\in\rD_{\cc}^{\geq 0}(\Bbb C_X)\Leftrightarrow \forall s\in S, Li^{*}_s F\in \rD_{\cc}^{\leq 0}(\Bbb C_X)\\
& \Longleftrightarrow F\in  {}^{p}\rD_{\cc}^{\leq 0}((p_X^{-1}\sho_S).\\
\end{eqnarray*}
\end{proof}

\begin{definition}\label{Def:ptf}
Let $d_ S=1$. A perverse sheaf 
$F\in \perv(p_X^{-1}\sho_S)$ 
(following the notation \ref{notation})
is called
{\emph{torsion-free}} if for any $s\in S$ we have
$Li^\ast_sF\in\perv(\Bbb C_X)$. We will denote by 
$ \perv(p_X^{-1}\sho_S)_{tf}$ 
the full subcategory of perverse sheaves which are torsion-free.
\end{definition}
In other words, for each $s_0\in S$, given a local coordinate on $S$ vanishing on $s_0$, the morphism
$F\stackrel{s}\to F$ is injective in the abelian category  
$\perv(p_X^{-1}\sho_S)$.

\medskip

\begin{Proposition}\label{PrOp:cctor}
If $d_S=1$, $\pi$ is the $t$-structure obtained by left tilting $p$ with respect to the torsion pair 
\[(^{\pi}\rD_{\cc}^{\leq -1}(p_X^{-1}\sho_S)\cap \perv(p_X^{-1}\sho_S), ^{\pi}\rD_{\cc}^{\geq 0}(p_X^{-1}\sho_S)\cap \perv(p_X^{-1}\sho_S))\] 
and 
$^{\pi}\rD_{\cc}^{\geq 0}(p_X^{-1}\sho_S)\cap \perv(p_X^{-1}\sho_S)= \perv(p_X^{-1}\sho_S)_{tf}$.
\end{Proposition}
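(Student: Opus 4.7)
\medskip

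\noindent\textbf{Proof plan.} The strategy parallels that of Proposition~\ref{Pholpi}: we first invoke Polishchuk's Lemma~\ref{PrOp:Pol} to identify $\pi$ as a left tilt, and then translate the right-hand aisle $^\pi\rD^{\geq 0}_\cc\cap\perv$ into the concrete condition of torsion-freeness via Lemma~\ref{LMFCScp}.

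For the first step, I would combine Lemma~\ref{Lcc1} with its shift by $[-1]$ to obtain the chain
\[
{}^{\pi}\rD_{\cc}^{\leq -1}(p_X^{-1}\sho_S)\subseteq
{}^{p}\rD_{\cc}^{\leq 0}(p_X^{-1}\sho_S)\subseteq
{}^{\pi}\rD_{\cc}^{\leq 0}(p_X^{-1}\sho_S)\subseteq
{}^{p}\rD_{\cc}^{\leq 1}(p_X^{-1}\sho_S),
\]
which places $\pi$ in the precise range where Polishchuk's result applies. Applying Lemma~\ref{PrOp:Pol} directly yields that $\pi$ is obtained from $p$ by left tilting with respect to the torsion pair
\[
({}^\pi\rD^{\leq -1}_\cc\cap\perv(p_X^{-1}\sho_S),\;{}^\pi\rD^{\geq 0}_\cc\cap\perv(p_X^{-1}\sho_S)).
\]

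The remaining task is to identify the torsion-free class $^\pi\rD^{\geq 0}_\cc\cap\perv(p_X^{-1}\sho_S)$ with $\perv(p_X^{-1}\sho_S)_{tf}$. I would argue by two applications of Lemma~\ref{LMFCScp}. Since $F$ is already perverse, in particular $F\in{}^p\rD_\cc^{\leq 0}(p_X^{-1}\sho_S)$, and by part~(1) of that lemma we have $Li^*_s F\in {}^p\rD_\cc^{\leq 0}(\Bbb C_X)$ for every $s\in S$. On the other hand, by part~(3) the condition $F\in{}^\pi\rD^{\geq 0}_\cc$ is equivalent to $Li^*_s F\in{}^p\rD_\cc^{\geq 0}(\Bbb C_X)$ for every $s\in S$. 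Combining the two gives $Li^*_s F\in\perv(\Bbb C_X)$ for every $s$, which is precisely the definition of $F\in\perv(p_X^{-1}\sho_S)_{tf}$. Conversely, if $F$ is torsion-free perverse, then $Li^*_s F$ lies in $\perv(\Bbb C_X)\subseteq {}^p\rD^{\geq 0}_\cc(\Bbb C_X)$, and part~(3) of Lemma~\ref{LMFCScp} forces $F\in{}^\pi\rD^{\geq 0}_\cc$.

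No serious obstacle is expected: the proof is essentially a bookkeeping exercise once Lemmas~\ref{Lcc1} and~\ref{LMFCScp} are available, and mirrors the argument given for the holonomic case in Proposition~\ref{Pholpi}. The only subtlety is the use of the equivalence in part~(3) of Lemma~\ref{LMFCScp} (which relies crucially on $t$-exactness of $\bD$ for the absolute perverse $t$-structure), in contrast with the merely one-directional implication in part~(2); this is what makes the torsion-free side of the torsion pair admit a pointwise characterization, while the torsion side ${}^\pi\rD^{\leq -1}_\cc\cap\perv$ need not.
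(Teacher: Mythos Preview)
Your proof is correct and follows essentially the same route as the paper: invoke Lemma~\ref{Lcc1} to place $\pi$ in the range required by Polishchuk's Lemma~\ref{PrOp:Pol}, then identify the torsion-free class. The only difference is that for the identification ${}^{\pi}\rD_{\cc}^{\geq 0}\cap \perv = \perv(p_X^{-1}\sho_S)_{tf}$ the paper simply cites \cite[Lemma~1.9]{MFCS2}, whereas you supply the argument directly via parts~(1) and~(3) of Lemma~\ref{LMFCScp}; your version is more self-contained and is exactly the content of that cited lemma.
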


\begin{proof}

By Lemma~\ref{Lcc1} $^{\pi}\rD_{\cc}^{\leq-1}(p_X^{-1}\sho_S)\subset {}^{p}\rD^{\leq 0}_{\cc}(p_X^{-1}\sho_S)\subset {}^{\pi}\rD^{\leq 0}_{\cc}(p_X^{-1}\sho_S)$ hence, by
Polishchuk result (Lemma~\ref{PrOp:Pol}), the $t$-structure $\pi$
is obtained by left tilting $p$ with respect to the torsion pair
\[({}^{\pi}\rD_{\cc}^{\leq -1}(p_X^{-1}\sho_S)\cap \perv(p_X^{-1}\sho_S), {}^{\pi}\rD_{\cc}^{\geq 0}(p_X^{-1}\sho_S)\cap \perv(p_X^{-1}\sho_S)).\]
By \cite[Lemma 1.9]{MFCS2}  ${}^{\pi}\rD_{\cc}^{\geq 0}(p_X^{-1}\sho_S)\cap \perv(p_X^{-1}\sho_S)= \perv(p_X^{-1}\sho_S)_{tf}$.
\end{proof}

\begin{corollary}\label{Cqabel}
 If $d_ S=1$ the full subcategory of perverse $S$-$\C$-constructible sheaves with a  perverse dual is quasi-abelian.
\end{corollary}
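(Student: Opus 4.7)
The plan is to reduce the statement to the general fact, recalled in Section~1, that the torsion-free class of a torsion pair in an abelian category is automatically a quasi-abelian category. More precisely, working under the standing assumption $d_S=1$ of Proposition~\ref{PrOp:cctor}, I will identify the subcategory of perverse $S$-$\C$-constructible sheaves whose dual is perverse with the torsion-free subcategory $\perv(p_X^{-1}(\sho_S))_{tf}$ appearing in that proposition, and then invoke the general principle to conclude.

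For the identification, let $F\in\perv(p_X^{-1}(\sho_S))$. Lemma~\ref{Lcc1} applied with $\ell=1$ gives that $\bD F\in {}^p\rD^{[0,1]}_{\cc}(p_X^{-1}\sho_S)$, so the $p$-perversity of $\bD F$ is equivalent to the single condition $\bD F\in {}^p\rD^{\leq 0}_{\cc}(p_X^{-1}\sho_S)$. By the very definition of the dual $t$-structure $\pi$, this amounts to $F\in {}^{\pi}\rD^{\geq 0}_{\cc}(p_X^{-1}\sho_S)$. Intersecting with $\perv(p_X^{-1}(\sho_S))$ and invoking the second assertion of Proposition~\ref{PrOp:cctor}, which identifies $^{\pi}\rD^{\geq 0}_{\cc}(p_X^{-1}\sho_S)\cap \perv(p_X^{-1}(\sho_S))$ with $\perv(p_X^{-1}(\sho_S))_{tf}$, the condition ``$F$ perverse with perverse dual'' is thus exactly ``$F\in\perv(p_X^{-1}(\sho_S))_{tf}$''.

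With this identification, the conclusion follows by the general discussion of torsion pairs in Section~1 (following \cite[5.4]{BonVdBergh} and \cite{S}): the torsion-free class $\mathcal F$ of a torsion pair in an abelian category $\mathcal A$ admits kernels ($\Ker_{\mathcal F}=\Ker_{\mathcal A}$) and cokernels ($\Coker_{\mathcal F}=f\circ\Coker_{\mathcal A}$), the short exact sequences in $\mathcal F$ being precisely the short exact sequences of $\mathcal A$ whose three terms lie in $\mathcal F$, and the latter are stable under pullbacks and pushouts in $\mathcal F$; this is exactly the definition of a quasi-abelian category.

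The only genuine step is the identification in the second paragraph; it is routine given Lemma~\ref{Lcc1} and Proposition~\ref{PrOp:cctor}, so I do not anticipate a substantial obstacle. The structure of the argument parallels the proof of Corollary~\ref{chol1} in the relative holonomic setting, where strictness played the role of perverse torsion-freeness.
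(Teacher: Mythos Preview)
Your proposal is correct and follows exactly the intended argument: the paper states Corollary~\ref{Cqabel} without proof, immediately after Proposition~\ref{PrOp:cctor}, relying precisely on the identification of ``perverse with perverse dual'' with the torsion-free class $\perv(p_X^{-1}(\sho_S))_{tf}$ (via Lemma~\ref{Lcc1} and Proposition~\ref{PrOp:cctor}) together with the general fact from Section~1 that torsion-free classes of torsion pairs are quasi-abelian. Your parallel with Corollary~\ref{chol1} is also the one the authors have in mind.
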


We have the following description of $\pi$ for arbitrary $d_S$:

\begin{theorem}\label{PrOp:PerDual}
The $t$-structure  $\pi$ on $\rD^\rb_{\cc}(p_X^{-1}\sho_S)$ can be
described in the following way:
\[
\begin{matrix}
{}^{\pi}\rD^{\leq 0}_{\cc}(p_X^{-1}\sho_S)= & \{ F
\in \rD^\rb_{\cc}(p_X^{-1}\sho_S)\; | \; 
i_x^{-1}F\in {}^{\pi}{\rD}_\coh^{\leq -d _{X_{\alpha}}}(\sho_S)
\hbox{ for any $x\in X_\alpha$ } \\
& \hfill \hbox{and for some adapted $\mu$-stratification } (X_\alpha)
\} \hfill\\
{}^{\pi}\rD^{\geq 0}_{\cc}(p_X^{-1}\sho_S)= & \{ F
\in \rD^\rb_{\cc}(p_X^{-1}\sho_S)\; | \; 
i_x^{!}F\in {}^{\pi}{\rD}_\coh^{\geq d_ {X_{\alpha}}}(\sho_S)
\hbox{ for any $x\in X_\alpha$ } \\
& \hfill \hbox{and for some adapted $\mu$-stratification } (X_\alpha)\} \hfill\\
\end{matrix}
\]
where the $t$-structure $\pi$ on ${\rD}_\coh^{\rb}(\sho_S)$
is the dual of the canonical $t$-structure described in
Remark~\ref{Rem:Kash}.
\end{theorem}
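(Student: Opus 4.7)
The strategy is to unfold the definition of $\pi$ as the dual $t$-structure to $p$ under $\bD$ (Definition~\ref{Def2.1}) and translate the resulting conditions on $\bD F$ into conditions on $F$ itself via two duality-commutation identities for the point-restrictions $i_x\colon\{x\}\times S\hookrightarrow X\times S$. The target conditions will then be recognised as the aisle and co-aisle of the dual $t$-structure $\pi$ on $\rD^\rb_\coh(\sho_S)$ described in Remark~\ref{Rem:Kash}.

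The first step is to establish the two key formulas
\[
i_x^!\,\bD F\cong \bD_S(i_x^{-1}F),\qquad
i_x^{-1}\,\bD F\cong \bD_S(i_x^{!}F),
\]
where $\bD_S(\cdot):=\rh_{\sho_S}(\cdot,\sho_S)$. Since $i_x$ has complex codimension $d_X$ one has $i_x^!(p_X^{-1}\sho_S)\cong\sho_S[-2d_X]$ and $i_x^{-1}(p_X^{-1}\sho_S)\cong\sho_S$; combined with the $[2d_X]$ built into $\bD$ and the standard adjunction identities for $\rh$ along a closed embedding, these two shifts cancel and yield the displayed isomorphisms with no residual shift. The target functor $\bD_S$ is precisely the coherent duality on $\{x\}\times S\simeq S$ whose associated dual $t$-structure on $\rD^\rb_\coh(\sho_S)$ was denoted $\pi$ in Remark~\ref{Rem:Kash}.

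The second step is to chain equivalences. Given $F\in\rD^\rb_\cc(p_X^{-1}\sho_S)$, both $F$ and $\bD F$ are $\C$-constructible and hence admit a common adapted $\mu$-stratification $(X_\alpha)$ (recall that Definition~\ref{Def2.1} does not depend on the particular adapted stratification chosen). For the aisle one then writes
\begin{align*}
F\in {}^\pi\rD^{\leq 0}_\cc
&\iff \bD F\in {}^p\rD^{\geq 0}_\cc \\
&\iff \forall\alpha,\ \forall x\in X_\alpha,\ i_x^!\,\bD F\in\rD^{\geq d_{X_\alpha}}_\coh(\sho_S)\\
&\iff \forall\alpha,\ \forall x\in X_\alpha,\ \bD_S(i_x^{-1}F)\in\rD^{\geq d_{X_\alpha}}_\coh(\sho_S)\\
&\iff \forall\alpha,\ \forall x\in X_\alpha,\ i_x^{-1}F\in{}^\pi\rD^{\leq -d_{X_\alpha}}_\coh(\sho_S),
\end{align*}
the last equivalence being the very definition of the dual $t$-structure $\pi$ on $\rD^\rb_\coh(\sho_S)$. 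The co-aisle description is obtained symmetrically from the second commutation formula, converting the condition $i_x^{-1}\,\bD F\in\rD^{\leq -d_{X_\alpha}}_\coh(\sho_S)$ into $\bD_S(i_x^{!}F)\in\rD^{\leq -d_{X_\alpha}}_\coh(\sho_S)$, hence into $i_x^!F\in{}^\pi\rD^{\geq d_{X_\alpha}}_\coh(\sho_S)$.

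\textbf{Main obstacle.} The only genuinely delicate point is the shift bookkeeping in the first step: one must verify that the $[2d_X]$-shift in the definition of $\bD$ cancels exactly against the shift produced by $i_x^!$ applied to $p_X^{-1}\sho_S$, so that the functor $\bD_S$ appearing on the right-hand side of the commutation formulas is the ``absolute'' coherent duality $\rh_{\sho_S}(\cdot,\sho_S)$ on $S$ with no residual shift. Once this is checked, the proof is a direct succession of tautological equivalences.
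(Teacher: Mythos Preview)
Your proposal is correct and follows essentially the same route as the paper: unfold the definition of $\pi$ via $\bD$, apply the commutation identities $i_x^!\bD F\cong\bD_S(i_x^{-1}F)$ and $i_x^{-1}\bD F\cong\bD_S(i_x^!F)$, and recognise the resulting conditions as the aisle and co-aisle of Kashiwara's dual $t$-structure on $\rD^\rb_\coh(\sho_S)$. The only difference is that the paper imports these commutation identities from \cite[Remark~2.24]{MFCS1} rather than rederiving the shift cancellation directly as you do.
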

\begin{proof}
Following the definition of the perverse $t$-structure and 
\cite[Remark 2.24]{MFCS1}
\begin{eqnarray*}
&F\in {}^{\pi}\rD^{\leq 0}_{\cc}(p_X^{-1}\sho_S)\Leftrightarrow \\
&\bD F\in {}^{p}\rD^{\geq 0}_{\cc}(p_X^{-1}\sho_S)\Leftrightarrow \\
&\forall \alpha, \, \forall x\in X_\alpha, \, i_x^{!}\bD F\cong \bD i_x^{-1}F\in {\rD}_\coh^{\geq d_{X_{\alpha}}}(\sho_S)\Leftrightarrow \\
& \forall \alpha, \, \forall x\in X_\alpha, \,  i_x^{-1}F\in {}^{\pi}{\rD}_\coh^{\leq -d_{ X_{\alpha}}}(\sho_S).
\\
\end{eqnarray*}

Dually
\begin{eqnarray*}
& F\in {}^{\pi}\rD^{\geq 0}_{\cc}(p_X^{-1}\sho_S)\Leftrightarrow \\
& \bD F\in {}^{p}\rD^{\leq 0}_{\cc}(p_X^{-1}\sho_S)\Leftrightarrow \\
& \forall \alpha, \, \forall x\in X_\alpha, \, i_x^{-1}\bD F\cong \bD i_x^{!}F\in {\rD}_\coh^{\leq -d _{X_{\alpha}}}(\sho_S)\Leftrightarrow \\
& \forall \alpha, \, \forall x\in X_\alpha, \,  i_x^{!}F\in {}^{\pi}{\rD}_\coh^{\geq d_{ X_{\alpha}}}(\sho_S)
\\ \end{eqnarray*}
\end{proof}

\begin{remark}
Let 
us
denote by $^{p}\tau^{\leq k}$ the truncation functor with
respect to the $t$-structure $p$ on $\rD^{\rb}_{\cc}(p_X^{-1}\sho_S)$.
We observe that given $F\in {}^{\pi}\rD^{\leq 0}_{\cc}(p_X^{-1}\sho_S)$
we get by
the previous Theorem~\ref{PrOp:PerDual}  
that $^{p}\tau^{\leq k}F\in {}^{\pi}\rD^{\leq 0}_{\cc}(p_X^{-1}\sho_S)$ for any
$k\in\Bbb Z$ since
the functors
$i_x^{-1}$ are exact and the $t$-structure $\pi$ on 
${\rD}_\coh^{\leq -d _{X_{\alpha}}}(\sho_S)$ is stable by truncation with respect to the 
standard $t$-structure.
So,
in analogy with Remark~\ref{Rem:2.13},
the $t$-structure
$\pi$ is left $p$-compatible
and, according to Lemma \ref{Lhol1} and  to
\cite[Theorem 4.3]{FMT},  it can be recovered from $p$ via an iterated right tilting
procedure of length $\ell$.
\end{remark}

We can now explicitly describe the torsion class in the abelian category
$\perv(p_X^{-1}\sho_S)$ as follows:

\begin{Proposition}\label{PrOp:prevt}
 Assume that $d_S=1$. We have:
\[ 
\begin{matrix}
\perv(p_X^{-1}\sho_S)_t:=&^{\pi}\rD_{\cc}^{\leq -1}(p_X^{-1}\sho_S)\cap \perv(p_X^{-1}\sho_S)\hfill \\
\hfill =&
\{F\in\perv(p_X^{-1}\sho_S) |\; \codim p_X(\supp F)\geq 1\}\hfill \\
\end{matrix}
\]
\end{Proposition}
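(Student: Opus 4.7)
The plan is to reduce both containments to a single stalk-level characterization obtained by combining Theorem~\ref{PrOp:PerDual} (applied to $F[-1]$) with the description of the $t$-structure $\pi$ on $\rD^\rb_\coh(\sho_S)$ given in Remark~\ref{Rem:Kash}. Since $d_S=1$, that description forces, for any perverse $F$, the condition $F\in{}^{\pi}\rD^{\leq -1}_{\cc}(p_X^{-1}\sho_S)$ to be equivalent to a single non-trivial requirement: for each $x\in X_\alpha$ of some adapted $\mu$-stratification, the top-degree cohomology sheaf $\shh^{-d_{X_\alpha}}(i_x^{-1}F)$ is $\sho_S$-torsion (the vanishings in higher degrees being ensured by perversity, and no constraint existing in lower degrees because $\codim\leq d_S=1$).

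With this reformulation, the inclusion $\supseteq$ is immediate: if $\codim p_X(\supp F)\geq 1$, then $\supp\shh^k(i_x^{-1}F)\subseteq p_X(\supp F)$ has codim $\geq 1$ in $S$ for every $k$ and every $x$, so in particular the top stalk cohomology is $\sho_S$-torsion and $F$ belongs to $\perv(p_X^{-1}(\sho_S))_t$.

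For the reverse inclusion $\subseteq$, I plan to propagate the torsion condition on the top stalk cohomology at each point to \emph{all} stalk cohomologies. Since $F$ is $\C$-constructible with cohomology sheaves locally constant along strata in the $X$-direction with coherent $\sho_S$-module fibers, this would give $\supp(F|_{X_\alpha\times S})$ of codim-$\geq 1$ projection to $S$ for every $\alpha$, hence the desired $\codim p_X(\supp F)\geq 1$. The propagation is carried out by induction on the strata of the adapted $\mu$-stratification in decreasing order of dimension. On the open stratum $X_0$ of top dimension, the two perversity bounds $i_x^{-1}F\in\rD^{\leq -d_X}_\coh$ and $i_x^!F\in\rD^{\geq d_X}_\coh$ together with the smooth-point identification $i_x^!F\simeq i_x^{-1}F[-2d_X]$ force $F|_{X_0\times S}$ to be concentrated in degree $-d_X$, where the torsion hypothesis directly gives the conclusion. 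At a lower stratum $X_\alpha$, the attaching triangle
\[
i_x^!F\to i_x^{-1}F\to i_x^{-1}Rj_*j^{-1}F\xrightarrow{+1},
\]
where $j$ is the open inclusion of the union of strictly higher-dimensional strata, combined with the perversity bound $\shh^k(i_x^!F)=0$ for $k<d_{X_\alpha}$ and the inductive hypothesis (that $j^{-1}F$ has $\sho_S$-torsion stalk cohomologies), squeezes each non-top stalk cohomology $\shh^k(i_x^{-1}F)$, $k<-d_{X_\alpha}$, into an extension of $\sho_S$-torsion sheaves in the long exact sequence, hence making it $\sho_S$-torsion.

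The main obstacle is this inductive step at the lower strata: one must carefully track the connecting maps in the attaching long exact sequence and exploit the perversity vanishings on $i_x^!F$ to justify that each non-top stalk cohomology indeed injects into (or sits in an extension involving) the $\sho_S$-torsion sheaves coming from the open neighborhood via the inductive hypothesis. Once this is established, the local constancy of cohomology along strata yields the global support statement $\codim p_X(\supp F)\geq 1$.
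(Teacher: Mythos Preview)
Your approach is correct and takes a genuinely different route from the paper's. You work directly with the stalk cohomologies of $F$, inducting on strata in decreasing dimension to show that \emph{every} $\shh^k(i_x^{-1}F)$ is $\sho_S$-torsion, whence the support conclusion by local constancy along strata. The paper instead transfers the problem to the endomorphism complex: using the attaching triangle for $\Rhom_{p_X^{-1}\sho_S}(F,F)$ (not for $F$) and the fact that $\Rhom(F,F)\in\rD^{\geq 0}$, it shows that $\shh^0\Rhom(F,F)=\ho_{\perv}(F,F)$ has codim-$\geq 1$ projection to $S$, and then observes that this forces $(s-s_0)^N\id_F=0$ locally, hence $(s-s_0)^NF=0$, hence $\codim p_X(\supp F)\geq 1$. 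Your argument is more elementary and avoids the endomorphism-sheaf trick; the paper's argument is slicker in that only a single cohomology degree of $\Rhom(F,F)$ needs to be controlled, and the identity-morphism step gives a clean passage from ``$\ho(F,F)$ torsion'' to ``$F$ torsion''.

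One notational slip to fix: the attaching triangle you want is
\[
i_x^{-1}(i_\alpha^!F)\to i_x^{-1}F\to i_x^{-1}Rj_*j^{-1}F\xrightarrow{+1},
\]
the stalk at $x$ of the triangle for the closed stratum inclusion $i_\alpha$; its first term lies in $\rD^{\geq -d_{X_\alpha}}_\coh(\sho_S)$ by perversity. The object $i_x^!F$ differs from $i_x^{-1}(i_\alpha^!F)$ by a shift of $[-2d_{X_\alpha}]$, so the two statements you wrote (the triangle with $i_x^!F$ and the bound $\shh^k(i_x^!F)=0$ for $k<d_{X_\alpha}$) are not mutually compatible as stated. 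With the corrected first term, the injection $\shh^k(i_x^{-1}F)\hookrightarrow\shh^k(i_x^{-1}Rj_*j^{-1}F)$ for $k<-d_{X_\alpha}$ is exactly what you need, and your inductive step goes through because locally near $x$ the support of $j^{-1}F$ sits in $(V\setminus X_\alpha)\times T$ with $T\subset S$ discrete.
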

\begin{proof}
We observe that $\perv(p_X^{-1}\sho_S)_t=
{}^{\pi}\rD_{\cc}^{\leq -1}(p_X^{-1}\sho_S)\cap {}^{p}\rD_{\cc}^{\geq 0}(p_X^{-1}\sho_S)$
(since ${}^{\pi}\rD_{\cc}^{\leq -1}(p_X^{-1}\sho_S)\subseteq {}^{p}\rD_{\cc}^{\leq 0}(p_X^{-1}\sho_S)$).
Let us recall that in the case $d_S=1$ the dual $t$-structure on $\rD_{\coh}^{\rb}(\sho_S)$ described in Remark~\ref{Rem:Kash} reduces to:
\[
\begin{matrix}
{}^{\pi}\rD_{\coh}^{\leq 0}(\sho_S)= & \{ \shm
\in \rD^{\leq 1}_{\coh}(\sho_S)\; | \; 
\codim{\mathrm{Supp}}(\shh^1(\shm))\geq 1\} \hfill\\
{}^{\pi}\rD_{\coh}^{\geq 0}(\sho_S)= & 
\{\shm\in \rD^{\geq 0}_{\coh}(\sho_S)\; | \; 
\shh^0(\shm) \hbox{ is strict} \}\hfill \\
\end{matrix}
\]
where we recall that since $d_S=1$ the condition 
$\codim{\mathrm{Supp}}(\shh^1(\shm))\geq 1$ is equivalent to 
$d_{\mathrm{Supp}(\shh^1(\shm))}=0$ or $\shm=0$.

Accordingly to Theorem~\ref{PrOp:PerDual}
an object $F$ belongs to $\perv(p_X^{-1}\sho_S)_t$ if and only if it verifies the following two conditions where $(X_\alpha)$ is a $\mu$-stratification of $X$ adapted to $F$:
\[ 
\begin{matrix}
\hfill (i) & \quad \forall \alpha,  i_x^{-1}F\in {\rD}_{\coh}^{\leq -d_{ X_{\alpha}}}\sho_S
\hbox{ and } \codim{\mathrm{Supp}}( i_x^{-1}(\shh^{-d _{X_{\alpha}}}(F)))\geq 1, \,\forall x\in X_{\alpha}.\\
\hfill(ii)  &\quad \forall \alpha, i_\alpha^{!}F\in {\rD}_\cc^{\geq -d_{ X_{\alpha}}}(p_{X_{\alpha}}^{-1}\sho_S).
\hfill \\
\end{matrix}
\]
Recall that, locally on $X_{\alpha}$,  $i_\alpha^{-1}F\simeq p^{-1}_{X_{\alpha}}G$, for some $G\in\rD^\rb_{\coh}(\sho_S)$ and so $(i)$ is equivalent to the following
\[
\begin{matrix}
\hfill (i') & \;  i_\alpha^{-1}F\in {\rD}_{\cc}^{\leq -d_{ X_{\alpha}}}(p_{X_{\alpha}}^{-1}\sho_S)
\hbox{ and } \codim p_{X_\alpha}({\mathrm{Supp}}( i_\alpha^{-1}\shh^{-d_{ X_{\alpha}}}(F)))\geq 1.\\
\end{matrix}
\]

\subsubsection*{Step 1}
Let us prove that, for any $F\in \perv(p_X^{-1}\sho_S)_t$, 
$\ho_{\perv(p_{X}^{-1}\sho_S)}(F,F)\cong
\ho_{\rD^\rb_{\cc}(p_X^{-1}\sho_S)}(F,F):=
\shh^0\Rhom_{p_X^{-1}\sho_S}(F,F)$ 
satisfies:
\[ 
 \codim p_X({\mathrm{Supp}}(\shh^0\Rhom_{p_X^{-1}\sho_S}(F,F)))\geq 1.
\]

We recall that 
$\Rhom_{p^{-1}\sho_S}(F,F)\in \rD^{\geq 0}_{\cc}(p_X^{-1}\sho_S)$
since $F\in \perv(p_X^{-1}\sho_S)$ 
(see \cite[Proposition 2.26]{MFCS1}). 
For each $X_{\alpha}$, $i_\alpha^{-1} \shh^0\Rhom_{p_X^{-1}\sho_S}(F,F)$ is coherent $S$-locally constant as a $p^{-1}_{X_{\alpha}}\sho_S$-module. 
Hence, according to Corollary \ref{Cc}, $p_{X_\alpha}({\mathrm{Supp}}(i_\alpha^{-1} \shh^0\Rhom_{p_X^{-1}\sho_S}(F,F)))$ is an analytic subset of $S$.

If $\codim p_X({\mathrm{Supp}}(\shh^0\Rhom_{p_X^{-1}\sho_S}(F,F)))=0$,
let $X_\alpha$ be a stratum of maximal dimension such that
\[
\codim p_{X_\alpha}({\mathrm{Supp}}(i_\alpha^{-1} \shh^0\Rhom_{p_X^{-1}\sho_S}(F,F)))=0.
\]
Such a  stratum $X_\alpha$ can not satisfy $d_{X_\alpha}=d_X$ since locally on $X_{\alpha}\times S$, 
$i_\alpha^{-1}\shh^{-d_{X_{\alpha}}}F\simeq p^{-1}_{X_{\alpha}}G$ for some $G\in\Mod_{\coh}(\sho_S)$
and condition $(i')$ gives  $\codim \mathrm{Supp}\,G\geq 1$ hence
$\codim p_{X_\alpha}({\mathrm{Supp}}(i_\alpha^{-1} \shh^0\Rhom_{p_X^{-1}\sho_S}(F,F)))\geq 1$. 
In particular $X_{\alpha}$ can not be open in $X$.
 
Let $V$ be an open neighbourhood of $X_\alpha$ in $X$ such that $V\setminus X_\alpha$ intersects only strata of dimension $>d_{X_\alpha}$, and let $j_\alpha:(V\setminus X_\alpha)\times S\hookrightarrow V\times S$ be the inclusion. 

Then the complex $i_\alpha^{-1}Rj_{\alpha*}j_\alpha^{-1}\Rhom_{p_X^{-1}\sho_S}(F,F)$ belongs to $ \rD^{\geq 0}_{\coh}(p_{X_\alpha}^{-1}\sho_S)$   and
$\shh^0 i_\alpha^{-1}Rj_{\alpha,*}j_\alpha^{-1}\Rhom_{p_X^{-1}\sho_S}(F,F)\cong
i_\alpha^{-1}j_{\alpha,*}j_\alpha^{-1}\shh^0\Rhom_{p_X^{-1}\sho_S}(F,F)$
and so
\[\codim p_{X_\alpha}({\mathrm{Supp}}(i_\alpha^{-1}\shh^0 Rj_{\alpha,*}j_\alpha^{-1}\Rhom_{p_X^{-1}\sho_S}(F,F)))\geq 1.\]

By the conditions $(i')$ and $(ii)$ we deduce that
\[
\begin{matrix}
\shh^0i_\alpha^!\Rhom_{p_X^{-1}\sho_S}(F,F)
\simeq& \shh^0\Rhom_{p_{X_\alpha}^{-1}\sho_S}(i_\alpha^{-1}F,i_\alpha^!F)\hfill \\
\hfill \simeq &
\ho_{p_{X_\alpha}^{-1}\sho_S}(\shh^{-d_{X_\alpha}}(i_\alpha^{-1}F),\shh^{-d _{X_\alpha}}(i_\alpha^!F))\hfill  \\
\end{matrix}
\]
and since $ \codim p_{X_\alpha}({\mathrm{Supp}}( i_\alpha^{-1}\shh^{-d_{ X_{\alpha}}}(F)))\geq 1$
we obtain \[ \codim p_{X_\alpha}({\mathrm{Supp}}(\shh^0i_\alpha^!\Rhom_{p^{-1}\sho_S}(F,F)))\geq 1.\]

From the distinguished triangle
\begin{multline*}
i_\alpha^!\Rhom_{p_X^{-1}\sho_S}(F,F)\to i_\alpha^{-1}\Rhom_{p_X^{-1}\sho_S}(F,F)\\
\to i_\alpha^{-1}Rj_{\alpha,*}j_\alpha^{-1}\Rhom_{p_X^{-1}\sho_S}(F,F)\xrightarrow{+1}
\end{multline*}
we obtain the short left exact sequence
\begin{multline*}
0\to \shh^0 i_\alpha^!\Rhom_{p_X^{-1}\sho_S}(F,F)\to \shh^0 i_\alpha^{-1}\Rhom_{p_X^{-1}\sho_S}(F,F)\\
\to \shh^0i_\alpha^{-1}Rj_{\alpha,*}j_\alpha^{-1}\Rhom_{p_X^{-1}\sho_S}(F,F)
\end{multline*}
which proves that
$\codim p_{X_\alpha}({\mathrm{Supp}}(i_\alpha^{-1}\Rhom_{p_X^{-1}\sho_S}(F,F)))\geq 1$
since both the first and the third term of the sequence satisfy this condition.

\subsubsection*{Step 2} Let 
{us}
now deduce from step $1$ that, for any $F\in \perv(p_X^{-1}\sho_S)_t$, 
 $\codim p_X(\supp F)\geq 1$. 
 
The previous condition implies
$\dim(p_X({\mathrm{Supp}}\ho_{\perv(p_{X}^{-1}\sho_S)}(F,F)))=0
$ for any $F\not= 0$ and hence $\forall (x_0,s_0)\in X\times S$, choosing a local coordinate $s$ in $S$ vanishing in $s_0$, by the $S$-$\C$-constructibility of $\ho_{\perv(p_{X}^{-1}\sho_S)}(F,F)\simeq \shh^0\Rhom_{p_X^{-1}\sho_S}(F,F)$ there exists a  positive integer $N$ such that in a neighbourhood of $(x_0, s_0)$,
$(s-s_0)^N\Hom_{\perv(p_{X}^{-1}\sho_S)}(F,F)=0$.
Therefore $(s-s_0)^N \id_F=0$ and so $\id_{(s-s_0)^N F}=0$ which entails the result.
\end{proof}

\begin{remark}\label{Rem:hpiperv}
 Assume that $d_ S=1$.
By 
Proposition~\ref{PrOp:cctor})
$\pi$ is the $t$-structure obtained by left tilting $p$ with respect to the torsion pair 
$(\perv(p_X^{-1}\sho_S)_{t},\perv(p_X^{-1}\sho_S)_{tf})$ in $\perv(p_X^{-1}\sho_S)$ 
while
$p$ is the $t$-structure obtained by right tilting $\pi$ with respect to the tilted torsion pair
$(\perv(p_X^{-1}\sho_S)_{tf},\perv(p_X^{-1}\sho_S)_{t}[-1])$ in $\shh_\pi$.
In particular we obtain that
\[
\shh_\pi=\{F\in {}^{p}\rD_{\cc}^{[0,1]}(p_X^{-1}\sho_S))\; |\; {}^p\shh^0(F)\hbox{ torsion free and }  
{}^p\shh^1(\shm)\hbox{ torsion}\}.
\]
\end{remark}

\section{$t$-exactness of the $\pDR$ and  the $\RH$ functors}
\subsection{Reminder on the construction of $\RH$}\label{subsec:relsubanalytic}
 For details on the relative subanalytic site and construction of relative subanalytic sheaves we refer to \cite{TL}. For details on the construction of $\RH$ we refer to \cite{MFCS2}. 

We shall denote by $\mathrm{Op}(Z)$ the family of open subsets of a subanalytic site $Z$.
One denotes by $\rho$, without reference to $\XS$ unless otherwise specified, the natural functor of sites $\rho:\XS \to (\XS)_{sa}$ associated to the inclusion $\mathrm{Op}((X\times\nobreak S)_{sa}) \subset \mathrm{Op}(\XS)$. Accordingly, we shall consider the associated functors $\rho_{*}, \rho^{-1}, \rho!$ introduced in \cite{KS5} and studied in \cite{P}.

One also denotes by $\rho':\XS \to X_{sa}\times S_{sa}$ the natural functor of sites. We have well defined functors $\rho'_*$ and $\rho'_!$ from $\Mod(\CC_{\XS})$ to $\Mod(\CC_{X_{sa}\times S_{sa}})$.

Note that $W\in \mathrm{Op}(X_{sa}\times S_{sa})$ if and only if $W$ is a locally finite union of relatively compact subanalytic open subsets $W$ of the form $U \times V$, $U \in \mathrm{Op}(X_{sa})$, $V \in \mathrm{Op}(S_{sa})$. Note that there is a natural morphism of sites $\eta:(X \times S)_{sa} \to X_{sa} \times S_{sa}$ associated to the inclusion $\mathrm{Op}(X_{sa} \times S_{sa}) \hto \mathrm{Op}((X \times S)_{sa})$.

In the absolute case, the Riemann-Hilbert reconstruction functor $\mathrm{RH}$ introduced by  Kashiwara in \cite{Ka3} from $\rD^\rb_{\rc}(\C_X)$ to $\rD^\rb(\shd_X)$ was later denoted by $\tho(\cdot,\sho_X)$ in \cite{KS5} where it was extensively studied. In \cite{KS4} the authors showed that it can be recovered using the language of subanalytic sheaves as $\rho^{-1}\rh(\cdot, \sho_X^t)$ where $\sho_X^t$ is the subanalytic complex of tempered holomorphic functions on $X_{sa}$.

Let $F$ be a subanalytic sheaf on $(\XS)_{sa}$. Following \cite{TL}, one denotes by $F^{S,\sharp}$ the sheaf on $X_{sa} \times S_{sa}$ associated to the presheaf
\[
\begin{array}{rll}
Op(X_{sa} \times S_{sa}) & \displaystyle\to \Mod(\C)& \\[2pt]
U \times V & \displaystyle\mto \Gamma(X \times V;\imin\rho\Gamma_{U \times S}F) &\simeq \Hom(\CC_U \boxtimes \rho_!\CC_V,F) \\[2pt]
&& \simeq \lpro {\substack{W \Subset V\\ W\in  Op^c(S_{sa})}}\Gamma(U \times W;F).
\end{array}
\]

One also denotes by $(\cbbullet)^{RS,\sharp}$ 
the associated right derived functor.

Then
$\cO_{\XS}^{t,S,\sharp}:=(\cO_{\XS}^{t})^{RS,\sharp}$ is an object of $\rD^\rb(\rho'_*p^{-1}\sho_S)$ and we also have $\sho_{\XS}\simeq \rho'^{-1}(\sho^{t,S,\sharp}_{\XS})$ (cf. \cite{TL} for details).
 
%{\color{blue}{
Assuming $d_S=1$,
%}}, 
the functor $\RH:\rD^\rb_\rc(\pOS)\to\rD^\rb(\DXS)$ was then defined in \cite{MFCS2} by the expression
$$\RH(F):=
\rho'^{-1}\rh_{\rho'_*\pOS}(\rho'_*F, \sho^{t,S,\sharp}_{\XS})[d_X].$$
When $F$ is $S-\C$ constructible, then $\RH(F)$ has regular holonomic $\DXS$-cohomologies (\cite[Th.\,3]{MFCS2}).
\subsection{Main results and proofs}
The main results of this section are Theorem \ref{T:perv 1} and Theorem \ref{T:perv 2} below.

\begin{theorem}\label{T:perv 1}
The functor $\pDR$ is $t$-exact with respect to the $t$-structures $P$ and $p$  above and consequently, $\pDR$ is also $t$-exact with respect to the dual $t$-structures $\Pi$ and $\pi$.
\end{theorem}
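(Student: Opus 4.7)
The plan is to reduce the $(P,p)$-$t$-exactness of $\pDR$ to Kashiwara's $t$-exactness of the absolute de Rham functor $\pDR_X\colon\rD^\rb_\hol(\shd_X)\to\rD^\rb_\cc(\C_X)$ via the family of fibre functors $Li^*_s$, and then to deduce the $(\Pi,\pi)$-exactness by duality. Lemma~\ref{Lem:red} applied to each of the conditions ``$\leq 0$'' and ``$\geq 0$'' reduces the $(P,p)$ case to showing that for every holonomic $\DXS$-module $\shm$, one has $\pDR(\shm)\in\perv(p_X^{-1}\sho_S)$.

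The key input is the base-change identity $Li^*_s\circ\pDR_{X\times S/S}\simeq\pDR_X\circ Li^*_s$, which is immediate from $\pDR=\Omega_{X\times S/S}\otimes^L_{\DXS}(\cdot)[d_X]$ together with $Li^*_s\Omega_{X\times S/S}\simeq\Omega_X$. Using it, the inclusion $\pDR(\shm)\in{}^p\rD^{\leq 0}$ follows at once: Lemma~\ref{LMFCS}~(2) puts $Li^*_s\shm\in\rD^{\leq 0}_\hol(\shd_X)$, the absolute theorem produces $\pDR_X(Li^*_s\shm)\in{}^p\rD^{\leq 0}(\C_X)$, and Lemma~\ref{LMFCScp}~(1) promotes this to $\pDR(\shm)\in{}^p\rD^{\leq 0}$.

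The lower bound $\pDR(\shm)\in{}^p\rD^{\geq 0}$ is the main obstacle, because the analogous step is not available through Lemma~\ref{LMFCScp}~(2): for non-flat $\shm$ the Tor-terms in $Li^*_s\shm$ produce negative perverse cohomology. One handles this by splitting along torsion. If $\shm$ is strict (\ie $p_X^{-1}\sho_S$-flat) then $Li^*_s\shm=i_s^*\shm$ is holonomic, $\pDR_X(i_s^*\shm)$ is perverse for every $s$, and Lemma~\ref{LMFCScp}~(2) gives the desired bound. For a torsion holonomic $\shm$ supported on a fibre $X\times\{s_0\}$ in the case $d_S=1$, Kashiwara's equivalence writes $\shm\simeq i_{s_0,+}\shn$ for some holonomic $\shd_X$-module $\shn$, and the compatibility of $\pDR$ with proper direct images gives $\pDR(\shm)\simeq i_{s_0,*}\pDR_X(\shn)$, which is perverse since closed immersions preserve perversity. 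The short exact sequence of Proposition~\ref{Pholpi} then sandwiches the general holonomic $\shm$ between two perverse sheaves in a distinguished triangle, forcing $\pDR(\shm)$ itself to be perverse. For $d_S\geq 2$ the same argument is iterated by induction on $\dim p_X(\supp\shm)$, at each step using the sub-module of sections locally annihilated by some $f\in\sho_S$ to peel off a direct-image contribution supported on a lower-dimensional slice of $S$.

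The $(\Pi,\pi)$-exactness is then a formal consequence of the $(P,p)$-exactness combined with the relative Verdier commutation $\pDR\circ\bD\simeq\bD\circ\pDR$ (where the right-hand $\bD$ is Verdier duality on $\rD^\rb_\cc(p_X^{-1}\sho_S)$), which is part of the Riemann--Hilbert picture underlying the isomorphism $\bD\pSol\simeq\pDR$ recalled in the introduction. Granted it, if $\shm\in{}^{\Pi}\rD_\hol^{\leq 0}(\DXS)$ then $\bD\shm\in{}^P\rD_\hol^{\geq 0}(\DXS)$ by definition of $\Pi$, so the $(P,p)$-case gives $\pDR(\bD\shm)\in{}^p\rD^{\geq 0}$; the commutation rewrites this as $\bD\pDR(\shm)\in{}^p\rD^{\geq 0}$, which by definition of $\pi$ is $\pDR(\shm)\in{}^{\pi}\rD^{\leq 0}$. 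The symmetric inequality gives the other half, so the torsion analysis of the third paragraph is really the only non-formal step of the whole proof.
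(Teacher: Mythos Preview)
Your reduction via Lemma~\ref{Lem:red}, your treatment of the ${}^{p}\rD^{\leq 0}$ half through the base-change identity $Li^*_s\circ\pDR\simeq\pDR_X\circ Li^*_s$ and Lemma~\ref{LMFCScp}(1), and your deduction of the $(\Pi,\pi)$-case from the $(P,p)$-case by the commutation $\pDR\circ\bD\simeq\bD\circ\pDR$ are all correct and match the paper.

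The gap is in the ${}^{p}\rD^{\geq 0}$ half for $d_S\geq 2$. Your torsion-splitting argument rests on two facts that are specific to $d_S=1$: (a) that ``torsion-free'' and ``$p_X^{-1}\sho_S$-flat'' coincide, so that $Li^*_s\shm$ is concentrated in degree~$0$ for $\shm$ torsion-free; and (b) the torsion pair of Proposition~\ref{Pholpi}, which is only stated and proved for $d_S=1$. For $d_S\geq 2$, the quotient $\shm/\shm_t$ is torsion-free but typically not flat, so Lemma~\ref{LMFCScp}(2) does not apply to it, and your induction never reaches a base case. The sketch ``peel off a direct-image contribution supported on a lower-dimensional slice of $S$'' does not work either: $p_X(\supp\shm_t)$ is an arbitrary hypersurface in $S$, generally singular, and there is no Kashiwara-type equivalence or direct-image description for $\DXS$-modules supported there. (Even for $d_S=1$, the sentence ``Kashiwara's equivalence writes $\shm\simeq i_{s_0,+}\shn$'' is not quite right: Kashiwara's equivalence is for $\shd_{X\times S}$-modules, whereas a $\DXS$-module such as $\sho_{X\times S}/(s-s_0)^2$ is not of this form. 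This is repairable by filtering by powers of $(s-s_0)$, but it should be said.)

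The paper handles the ${}^{p}\rD^{\geq 0}$ half without any torsion analysis and uniformly in $d_S$. It first dualises: $\pDR({}^{P}\rD^{\geq 0})\subseteq{}^{p}\rD^{\geq 0}$ is equivalent to $\pDR({}^{\Pi}\rD^{\leq 0})\subseteq{}^{\pi}\rD^{\leq 0}$. It then uses the support-theoretic descriptions of $\Pi$ (Theorem~\ref{PrOp:PiSupp}) and of $\pi$ (Theorem~\ref{PrOp:PerDual}) to run an induction on the $P$-amplitude: if $\shm\in{}^{\Pi}\rD^{\leq 0}\cap{}^{P}\rD^{\leq k+1}$, the truncation triangle reduces to showing $\pDR({}^{P}\shh^{k+1}\shm)[-k-1]\in{}^{\pi}\rD^{\leq 0}$, which follows because ${}^{P}\shh^{k+1}\shm$ has $\codim p_X(\supp)\geq k+1$ and this codimension is inherited by every $i_x^{-1}\pDR$, placing the shifted complex in ${}^{\pi}\rD^{\leq 0}$ by Remark~\ref{Rem:Kash}. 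No flatness or torsion dichotomy is needed.
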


\begin{theorem}\label{T:perv 2}
If $d_S=1$ the functor $\RH$ is $t$-exact with respect to the $t$-structures $p$ and $\Pi$ as well as 
with respect to the their dual $t$-structures $\pi$ and $P$.
\end{theorem}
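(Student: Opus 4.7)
The plan is to apply Lemma~\ref{Lem:red}, in the version appropriate for the contravariant functor $\RH$, to reduce the two $t$-exactness statements to checking on hearts: the $(p,\Pi)$-case becomes $\RH(\perv(\pOS))\subseteq\shh_\Pi$, and the $(\pi,P)$-case becomes $\RH(\shh_\pi)\subseteq\shh_P=\Mod_\hol(\DXS)$. These inclusions in turn follow from two sub-claims controlling $\RH$ on the torsion and torsion-free parts of these hearts:
\begin{itemize}
\item[(a)] if $F\in\perv(\pOS)_{tf}$, then $\RH(F)$ is a strict holonomic $\DXS$-module in $P$-degree $0$;
\item[(b)] if $F\in\perv(\pOS)_{t}$, then $\RH(F)\simeq\cN[-1]$ for some torsion holonomic $\DXS$-module $\cN$ in $P$-degree $1$.
\end{itemize}

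Granting (a) and (b), both inclusions are obtained by applying $\RH$ to the torsion-pair triangle in the relevant heart. For $F\in\perv(\pOS)$, the sequence $0\to F_t\to F\to F_{tf}\to 0$ yields the distinguished triangle $\RH(F_{tf})\to\RH(F)\to\RH(F_t)\xrightarrow{+1}$ (arrows reversed by contravariance). The long exact sequence in ${}^P\shh^\bullet$, together with (a) and (b), gives ${}^P\shh^0(\RH(F))\simeq\RH(F_{tf})$ strict, ${}^P\shh^1(\RH(F))\simeq\cN$ torsion, and vanishing in all other degrees, whence $\RH(F)\in\shh_\Pi$ by Proposition~\ref{Pholpi}. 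For $F\in\shh_\pi$, the tilted torsion pair $(\perv(\pOS)_{tf},\perv(\pOS)_{t}[-1])$ (Remark~\ref{Rem:hpiperv}) provides the triangle ${}^p\shh^0(F)\to F\to{}^p\shh^1(F)[-1]\xrightarrow{+1}$. Applying $\RH$ and using $\RH(X[-1])\simeq\RH(X)[1]$ (contravariance), the $[-1]$ shift in (b) cancels, so both outer terms of the resulting triangle sit in $P$-degree $0$, forcing $\RH(F)\in\Mod_\hol(\DXS)$.

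Sub-claim (a) is the easier part. Its key input is the base change $Li^*_s\RH(F)\simeq\mathrm{RH}(i^*_s F)$, where $\mathrm{RH}$ is Kashiwara's absolute Riemann--Hilbert functor; this compatibility follows from the construction of $\RH$ via the tempered subanalytic sheaf $\sho^{t,S,\sharp}_{X\times S}$ and the identification of its derived fibre at $\{s\}\subset S$ with $\sho^t_X$. For $F$ torsion-free, $i^*_s F$ is perverse on $X$, so by the classical $t$-exactness of absolute $\mathrm{RH}$ (middle-perverse to natural), $\mathrm{RH}(i^*_s F)$ is a holonomic $\shd_X$-module concentrated in degree $0$ for each $s\in S$. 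Lemma~\ref{LMFCS}~(2) and~(4) then force $\RH(F)\in{}^P\rD^{[0,0]}_\hol(\DXS)$, and strictness is read off from $Li^*_s\RH(F)$ being a module in degree $0$ for every $s$.

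Sub-claim (b) is the main obstacle. The strategy is to reduce, via an $s$-adic induction on the order of nilpotence, to the case $s\cdot F=0$ for a local coordinate $s$ vanishing at a point of the $0$-dimensional set $p_X(\supp F)$. In that case $F\simeq i_* G$ for the closed immersion $i:X\times\{s_0\}\hookrightarrow X\times S$ and some perverse sheaf $G$ on $X$, and one must identify $\RH(i_* G)\simeq i_+\mathrm{RH}(G)[-1]$ with the $\DXS$-module pushforward along $i$. The $[-1]$ shift reflects the codimension $d_S=1$, as confirmed in the test case $X=\{\mathrm{pt}\}$, where $\RH=\bD$ gives $\bD(\C_{s_0})\simeq\C_{s_0}[-1]$. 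Establishing this identification at the level of the tempered sheaf $\sho^{t,S,\sharp}_{X\times S}$---for instance by applying $\rh(\cdot,\sho^{t,S,\sharp}_{X\times S})$ to the Koszul resolution $0\to\pOS\xrightarrow{s-s_0}\pOS\to p_X^{-1}\C_{s_0}\to 0$ and using injectivity of multiplication by $s-s_0$ on the tempered sheaf---is where the technical weight of the proof lies.
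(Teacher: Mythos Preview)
Your reduction via Lemma~\ref{Lem:red} to the two heart-inclusions, and your sub-claim~(a), match the paper exactly (the paper cites \cite[Cor.~4]{MFCS2} for (a), which is precisely what you re-derive via base change $Li^*_s\RH\simeq\mathrm{RH}\,Li^*_s$ together with Lemma~\ref{LMFCS}). The divergence is entirely in how sub-claim~(b) is obtained.

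You attack (b) head-on by $s$-adic induction and a pushforward compatibility $\RH(i_*G)\simeq i_+\mathrm{RH}(G)[-1]$, conceding that this identification at the level of $\sho^{t,S,\sharp}_{\XS}$ is where the real work lies. The paper sidesteps this computation completely. It first proves $\RH(F)\in{}^{\Pi}\rD^{\geq 0}_{\rhol}(\DXS)$ for \emph{every} perverse $F$ in one stroke: $Li^*_sF\in{}^p\rD^{\leq 0}_\cc(\C_X)$ by Lemma~\ref{LMFCScp}(1), so $Li^*_s\RH(F)\simeq\mathrm{RH}(Li^*_sF)\in\rD^{\geq 0}_\hol(\shd_X)$, and Lemma~\ref{LMFCS}(4) concludes. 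For the $\leq 0$ direction it invokes Lemma~\ref{LMFCScp}(4) (which for $d_S=1$ gives $Li^*_sF\in{}^p\rD^{\geq -1}_\cc(\C_X)$) and base change again to land $\RH(F)$ in ${}^P\rD^{\leq 1}_{\rhol}(\DXS)$; the remaining requirement that ${}^P\shh^1(\RH(F))$ be torsion is then immediate for torsion $F$ by the trivial support inclusion $\supp\RH(F)\subseteq\supp F\subseteq X\times W$ with $\dim W=0$, and vacuous for torsion-free $F$ by (a). Your full statement (b) then falls out \emph{a posteriori} in the proof of the $(\pi,P)$-case, exactly as the paper argues: once $\RH(F_t)\in\shh_\Pi$ is known, its ${}^P\shh^0$ is simultaneously strict (Proposition~\ref{Pholpi}) and torsion (support), hence zero.

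So your strategy is sound, but the direct proof of (b) is unnecessary labour, and the compatibility you would need is not established anywhere in the paper. The paper's order of operations---prove $\shh_\Pi$-membership for all of $\perv(\pOS)$ first using only base change and a support bound, then read off (b) as a corollary---replaces your subanalytic-sheaf computation by two invocations of Lemma~\ref{LMFCS} and Proposition~\ref{PrOp:prevt}.
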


\textit{Proof of Theorem \ref{T:perv 1}}
The second statement follows obviously from the first thanks to the $t$-exactness of the duality functors (by definition of the dual $t$-structures) and the commutation of $\pDR$ with duality (cf. \cite[Th. 3.11]{MFCS1}). Let us now prove the first part of the statement. According to Lemma~\ref{Lem:red}, it is sufficient to prove that if $\shm$ is a holonomic relative module then $\pDR(\shm)$ is perverse.

In \cite[Proposition 1.15 (1)]{MFCS2} the authors proved that
$\pDR({}^{P}\rD_{\hol}^{\leq 0}(\shd_{X\times S/S}))\subseteq
{}^{p}\rD^{\leq 0}_{\cc}(p_X^{-1}\sho_S)$.

It remains  to prove that
$\pDR({}^{P}\rD_{\hol}^{\geq 0}(\shd_{X\times S/S}))\subseteq
{}^{p}\rD^{\geq 0}_{\cc}(p_X^{-1}\sho_S)$
which, by duality (and by the commutativity of $\bD$ and $\pDR$),
is equivalent to prove that
$\pDR({}^{\Pi}\rD_{\hol}^{\leq 0}(\shd_{X\times S/S}))\subseteq
{}^{\pi}\rD^{\leq 0}_{\cc}(p_X^{-1}\sho_S)$.

Recall that we proved in Theorem~\ref{PrOp:PiSupp}
that
$${}^{\Pi}\rD_{\hol}^{\leq 0}(\shd_{X\times S/S}) = 
\{ \shm\in \rD^\rb_{\hol}(\shd_{X\times S/S})\; | \; 
\codim p_X({\mathrm{Supp}}( {}^{P}\shh^k(\shm)))\geq k\}.$$

We denote by $^{P}\tau^{\leq k}$ the truncation functor with
respect to the $t$-structure $P$ on $\rD_{\hol}^{\rb}(\shd_{X\times S/S})$. 
Given $\shm\in  {}^{\Pi}\rD_{\hol}^{\leq 0}(\shd_{X\times S/S})$, for any $k\in \Bbb Z$
 both
${}^{P}\tau^{\leq k}\shm$ and $^{P}\tau^{\geq k+1}\shm$
belong to ${}^{\Pi}\rD_{\hol}^{\leq 0}(\shd_{X\times S/S})$
(since 
${}^{P}\shh^i({}^{P}\tau^{\leq k}\shm)))=
{}^{P}\shh^i(\shm)))$ for $i\leq k$ or zero otherwise).

Let us prove that:
\begin{equation}\tag{$I_k$}\label{k}
\shm\in {}^{\Pi}\rD_{\hol}^{\leq 0}(\shd_{X\times S/S})\cap
{}^{P}\rD_{\hol}^{\leq k}(\shd_{X\times S/S}) \Rightarrow \pDR(\shm)\in
{}^{\pi}\rD^{\leq 0}_{\cc}(p_X^{-1}\sho_S)
\end{equation} 
by induction on $k\geq 0$.

Let $k=0$. By Lemma~\ref{Lhol1} and Lemma~\ref{Lcc1}
we get
${}^{P}\rD_{\hol}^{\leq 0}(\shd_{X\times S/S})\subseteq
{}^{\Pi}\rD_{\hol}^{\leq0}(\shd_{X\times S/S})$
and 
$^{p}\rD^{\leq 0}_{\cc}(p_X^{-1}\sho_S)\subset {}^{\pi}\rD^{\leq 0}_{\cc}(p_X^{-1}\sho_S)$
and so $(I_0)$ holds true by Lemma~\ref{Lcc1}.
Let us suppose that \eqref{k} holds true and let us prove $(I_{k+1})$.
Let consider $\shm\in {}^{\Pi}\rD_{\hol}^{\leq 0}(\shd_{X\times S/S})\cap
{}^{P}\rD_{\hol}^{\leq k+1}(\shd_{X\times S/S})$.
The distinguished triangle
$$^{P}\tau^{\leq k}\shm\to \shm\to ^{P}\shh^{k+1}(\shm)[-k-1]\underset{+}{\to}$$
induces the distinguished triangle 
$$\pDR(^{P}\tau^{\leq k}\shm)\to \pDR(\shm)\to \pDR(^{P}\shh^{k+1}(\shm))[-k-1]
\underset{+}{\to}$$
By inductive hypothesis $\pDR(^{P}\tau^{\leq k}\shm)\in{}^{\pi}\rD^{\leq 0}_{\cc}(p_X^{-1}\sho_S)$ since $^{P}\tau^{\leq k}\shm\in  {}^{\Pi}\rD_{\hol}^{\leq 0}(\shd_{X\times S/S})\cap
{}^{P}\rD_{\hol}^{\leq k}(\shd_{X\times S/S})$.
In order to conclude it is enough to prove that
$\pDR(^{P}\shh^{k+1}(\shm))[-k-1]\in {}^{\pi}\rD^{\leq 0}_{\cc}(p_X^{-1}\sho_S)$.

By Proposition~\ref{PrOp:PerDual} 
we have to prove that
$$i_x^{-1}\left(\pDR(^{P}\shh^{k+1}(\shm))[-k-1]\right)\in {}^{\Pi}{\rD}_\coh^{\leq -d_{X_{\alpha}}}(\sho_S)$$
for any $\alpha$ and any $x\in X_\alpha$, for some adapted $\mu$-stratification $(X_\alpha)$.
By the first item we have 
$\pDR(^{P}\shh^{k+1}(\shm))\in {}^{p}\rD^{\leq 0}_{\cc}(p_X^{-1}\sho_S)\subseteq
{}^{\pi}\rD^{\leq 0}_{\cc}(p_X^{-1}\sho_S)$ and thus (see Definition~\ref{Def2.1})
$$i_x^{-1}\left(\pDR(^{P}\shh^{k+1}(\shm))[-k-1]\right)\in {\rD}_\coh^{\leq -d_{X_{\alpha}}+k+1}(\sho_S)$$
 for any $\alpha$ and any $x\in X_\alpha$, for some
 adapted $\mu$-stratification $(X_\alpha)$.
Moreover $\codim p_X({\mathrm{Supp}}( {}^{P}\shh^{k+1}(\shm)))\geq k+1$
since $\shm\in {}^{\Pi}\rD_{\hol}^{\leq 0}(\shd_{X\times S/S})$
and thus
$$\codim p_X({\mathrm{Supp }}\; i_x^{-1}(\pDR( ^{P}\shh^{k+1}(\shm))[-k-1]))\geq k+1$$
which proves (see Remark~\ref{Rem:Kash}) that
$i_x^{-1}\left(\pDR(^{P}\shh^{k+1}(\shm))[-k-1]\right)\in {}^{\Pi}{\rD}_\coh^{\leq -d_{X_{\alpha}}}(\sho_S)$.

\qed

\begin{corollary}\label{C:psol} The functor $\pSol$ is $t$-exact with respect to the $t$-structures respectively $P$ on $\rD^\rb_{\hol}(\DXS)^{Op}$ and $\pi$ on $\rD^\rb_{\cc}(p^{-1}\sho_S)$.
\end{corollary}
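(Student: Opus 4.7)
The plan is to deduce the statement from Theorem~\ref{T:perv 1} by a formal duality argument, using the commutation isomorphism $\bD\pSol(\cdot)\simeq\pDR(\cdot)$ recalled in the introduction. Applying $\bD$ on the $S$\nobreakdash-$\C$\nobreakdash-constructible side to both members of this isomorphism and using that $\bD\bD\simeq\id$ both on $\rD^\rb_{\hol}(\DXS)$ and on $\rD^\rb_{\cc}(\pOS)$ (the latter was noted at the beginning of Section~4), we obtain a canonical isomorphism $\pSol(\shm)\simeq\bD\pDR(\shm)$ for every $\shm\in\rD^\rb_{\hol}(\DXS)$.

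Since $\pSol$ is contravariant, the $t$\nobreakdash-exactness claim unpacks as the two inclusions
\[
\pSol({}^{P}\rD^{\leq 0}_{\hol}(\DXS))\subseteq {}^{\pi}\rD^{\geq 0}_{\cc}(\pOS),\qquad
\pSol({}^{P}\rD^{\geq 0}_{\hol}(\DXS))\subseteq {}^{\pi}\rD^{\leq 0}_{\cc}(\pOS),
\]
which correspond to the opposite $t$-structure $P^{op}$ on $\rD^\rb_{\hol}(\DXS)^{op}$. To establish the first one, let $\shm\in {}^{P}\rD^{\leq 0}_{\hol}(\DXS)$. By Theorem~\ref{T:perv 1}, $\pDR(\shm)\in {}^{p}\rD^{\leq 0}_{\cc}(\pOS)$, and by the very definition of the dual $t$-structure $\pi$ (Definition~\ref{Def2.1}) this is equivalent to $\bD\pDR(\shm)\in {}^{\pi}\rD^{\geq 0}_{\cc}(\pOS)$; using the isomorphism above we conclude $\pSol(\shm)\in {}^{\pi}\rD^{\geq 0}_{\cc}(\pOS)$. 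The second inclusion is symmetric: starting from $\shm\in{}^{P}\rD^{\geq 0}_{\hol}(\DXS)$, Theorem~\ref{T:perv 1} gives $\pDR(\shm)\in {}^{p}\rD^{\geq 0}_{\cc}(\pOS)$, hence $\bD\pDR(\shm)\in {}^{\pi}\rD^{\leq 0}_{\cc}(\pOS)$ by definition of $\pi$, and again $\pSol(\shm)\simeq\bD\pDR(\shm)$ closes the argument.

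There is essentially no obstacle here: the whole content is already encoded in Theorem~\ref{T:perv 1}, and the passage from $\pDR$ to $\pSol$ is purely formal, using only the duality identity $\pDR\simeq\bD\pSol$ together with the tautology that $\bD$ exchanges a $t$-structure with its dual. Accordingly this should be stated as a one-paragraph corollary rather than given a detailed standalone argument.
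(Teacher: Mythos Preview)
Your proof is correct and follows essentially the same approach as the paper: both deduce the result from Theorem~\ref{T:perv 1} via the duality identity $\pSol\simeq\bD\pDR$ together with the definition of $\pi$ as the dual of $p$. The paper compresses this into a single sentence citing $\bD\pDR=\pSol$, whereas you spell out the two inclusions explicitly, but the content is identical.
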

\begin{proof}
The statement follows immediately from the relation $\bD\pDR=\pSol$ (cf. \cite[Corollary\,3.9]{MFCS1}).  
\end{proof}

\remark However the functor $\pSol:\rD^\rb_{\hol}(\DXS)^{Op}\to \rD^\rb_{\cc}(p^{-1}\sho_S)$ is not $t$-exact with respect to the $t$-structures respectively $P$ on $\rD^\rb_{\hol}(\DXS)^{Op}$ and $p$ on $\rD^\rb_{\cc}(p^{-1}\sho_S)$ as shown by the following example:

\example
Let $X=\C^*$ and $S=\C$ with respective coordinates $x$ and $s$. Let~$\shm$ be the quotient of $\DXS$ by the left ideal generated by $\partial_x$ and $s$. Then $\shm$ can be identified with $\sho_{X\times\{0\}}$ with the $s$-action being zero and the standard $\partial_x$-action. We notice that $\shm$ is holonomic, but not strict. As a $\DXS$-module, it has the following resolution:
\[
0\ra\DXS\xrightarrow{~P\mto ( P\partial_x,Ps)~}\DXS^2 \xrightarrow{~(Q,R)\mto R\partial_x-Qs~}\DXS\ra\shm\ra0.
\]
Then $\pSol(\shm)$ is represented by the complex
$$0\to\underset{-1}{\sho_{\XS}}\xrightarrow{~\phi~}\underset{0}{ \sho_{\XS}^2}\xrightarrow{~\psi~}\underset{1}{\sho_{\XS}}\to 0,$$
where $\phi(f)=(\partial_xf, sf)$ and $\psi(g,h)=sg-\partial_xh$. We know that $\pSol(\shm)$ is constructible, and since we work on $\C^*$, we see that its cohomology is $S$-locally constant. We note that $\shh^0{}(\pSol(\shm))|_{X\times\{0\}}\neq0$, since $(g,h)=(0,1)$ is a nonzero section of it. Therefore, $\pSol(\shm)$ does not belong to $\pD^{\leq 0}_\cc(\pOS)$ and so $\pSol(\shm)$ is not perverse.

However $\pDR\shm$ is a perverse object: it is realized by the complex
$$0\to\underset{-1}{\shm}\To{\partial_x}\underset{0}{\shm}\to 0$$
and the surjectivity of $\partial_x$ on $\sho_{X\times\{0\}}$ entails that $\shh^0\pDR(\shm)=0$. Moreover $\shh^jR\Gamma_{\XS}\pDR\shm=\shh^j\pDR\shm=0$, for $j<-1$. 

\medskip

\textit{Proof of Theorem \ref{T:perv 2}}

i) Let us prove the first $t$-exactness.
By Lemma~\ref{Lem:red} we have to prove that
$\RH(\perv(p_X^{-1}\sho_S))
\subseteq {}^{\Pi}\rD_{\rhol}^{\geq 0}(\DXS)\cap{}^{\Pi}\rD_{\rhol}^{\leq 0}(\DXS)$.
Recall that $\mathrm{RH} Li^*_s (F)\cong Li^*_s\RH (F)$ 
by \cite[Proposition 3.25]{MFCS2}.
According to Lemma~\ref{LMFCScp} 
(4)
given $F\in \perv(p_X^{-1}\sho_S)$ we have
 $L i^*_sF\in {}^{p}\rD_{\cc}^{\leq 0}(\Bbb C_X)$ for each $s\in S$  and hence
 $\mathrm{RH}\, Li^*_s (F)\cong Li^*_s\RH (F)\in {}^{P}\rD_{\rhol}^{\geq 0}({\mathcal D}_X)$ for each $s\in S$ 
 (since the functor  $\mathrm{RH}$ is $t$-exact in the absolute case)
and so by Lemma~\ref{LMFCS} we obtain
$\RH(F)\in {}^{\Pi}\rD_{\rhol}^{\geq 0}(\DXS)$.

It remains to prove that 
$\RH(\perv(p_X^{-1}\sho_S))
\subseteq {}^{\Pi}\rD_{\rhol}^{\leq 0}(\DXS)$. 
Let $F\in \perv(p_X^{-1}\sho_S)$. According to Lemma \ref{LMFCScp} (4), for any $s\in S$, $
Li^*_s F\in {}^{p}\rD_{\cc}^{\geq -1}(X).$
Hence
$Li^*_s (\RH F)\cong \mathrm{RH}(Li^*_s F)\in \rD^{\leq 1}_{\rhol}({\mathcal D}_X)$
and thus by (2) of Lemma ~\ref{LMFCS}
we obtain $(\ast)\,\RH (F)\in {}^{P}\rD_{\rhol}^{\leq 1}(\DXS)$.
By Proposition~\ref{Pholpi} and Definition~\ref{Def:tilt}
it is sufficient to prove that $(\ast\ast)\,{}^{P}\shh^1(\RH (F))$ is a torsion module.
 
We divide the question in two cases, the torsion case and the torsion free case.
Let us  first suppose that $F\in\perv(p_X^{-1}\sho_S)_{t}$.
According to Proposition~\ref{PrOp:prevt} we have 
$ \codim p_X(\supp F)\geq 1$ and so also
$ \codim p_X(\supp {}^{P}\shh^1(\RH (F)))\geq 1$.

Let us now suppose that $F\in\perv(p_X^{-1}\sho_S)_{tf}$.  
According to \cite[Cor.4]{MFCS2},
$\RH (F)$ is a regular strict holonomic $\DXS$-module so it belongs to 
${}^{\Pi}\rD_{\rhol}^{\leq 0}(\DXS)$ which achieves the proof of i).

ii) Let us now prove the second $t$-exactness. By Lemma~\ref{Lem:red} we have to prove that
$\RH(\shh_\pi)\subseteq \Mod_{\hol}(\DXS)$.
Given $F\in \shh_\pi$ we know, according to Remark~\ref{Rem:hpiperv}, that $F\in {}^{p}\rD^{[0,1]}_{\cc}(p_X^{-1}\sho_S)$ with ${}^p\shh^0(F)$ strict while 
${}^p\shh^{1}(F)$ is a torsion module. So, by Proposition~\ref{PrOp:prevt}, we have 
$ \codim p_X(\supp {}^p\shh^{1}(F))\geq 1$.
Let us consider  the distinguished triangle
${}^p\shh^{0}(F)\to F \to {}^p\shh^{1}(F)[-1]\stackrel{+1}\to $
(which provides the short exact sequence of $F$ with respect to the torsion pair
$(\perv(p_X^{-1}\sho_S)_{tf},\perv(p_X^{-1}\sho_S)_{t}[-1])$ in $\shh_\pi$). 
According to \cite[Cor.4]{MFCS2} we conclude that $\RH({}^p\shh^{0}(F))$ is a strict relative holonomic $\DXS$-module while, by the previous $t$-exactness, 
$\RH({}^p\shh^{1}(F)[-1])=\RH({}^p\shh^{1}(F))[1]\in \shh_\Pi[1]$. According to Proposition~\ref{Pholpi} we have
$$ \shh_\Pi[1]=\{ \shm\in {}^{P}\rD^{[-1,0]}_{\hol}(\shd_{X\times S/S})\; |\; {}^P\shh^{-1}(\shm)\hbox{ strict and }  
{}^P\shh^0(\shm)\hbox{ torsion}\}.
$$ 
On the other hand, since $\codim p_X(\supp {}^p\shh^{1}(F))\geq 1$, the cohomology sheaves of
$\RH({}^p\shh^{1}(F)[-1])$ are torsion $\DXS$-modules. Therefore  ${}^P\shh^{-1}(\RH({}^p\shh^{1}(F))[1])$, being strict, must be equal to $0$, in other words $\RH({}^p\shh^{1}(F)[-1])\in \Mod_{\hol}(\DXS)$
which ends the proof.
\qed

\end{document}